\documentclass[11pt,a4paper,twoside]{article}

\usepackage{iftex}
\usepackage[T1]{fontenc}
\ifPDFTeX
  \usepackage[utf8]{inputenc}
\fi
\usepackage{lmodern}

\usepackage{amsmath,amsthm,amssymb,mathtools}

\usepackage{geometry}
\usepackage{xcolor}
\usepackage{microtype}
\usepackage{fancyhdr}

\usepackage{tikz}
\usetikzlibrary{arrows.meta,calc,positioning}
\usepackage[section]{placeins}
\usepackage{needspace}
\usepackage{caption}
\definecolor{figblue}{RGB}{41,85,125}
\definecolor{figorange}{RGB}{177,91,37}
\definecolor{figgreen}{RGB}{43,111,91}
\definecolor{figgray}{RGB}{93,101,109}
\tikzset{
  proof axis/.style={->,line width=.45pt,draw=figgray},
  proof arrow/.style={->,line width=.7pt,draw=figblue},
  proof curve/.style={line width=.85pt,draw=figblue},
  proof point/.style={circle,fill=figblue,inner sep=1.5pt},
  proof note/.style={align=center,inner sep=2pt},
  proof box/.style={draw=figblue!55,fill=figblue!3,rounded corners=2pt,
    align=center,inner sep=6pt,text width=3.5cm,minimum height=1cm}
}
\usepackage[
  colorlinks=true,
  linkcolor=blue!45!black,
  citecolor=blue!45!black,
  urlcolor=blue!45!black
]{hyperref}

\allowdisplaybreaks[3]
\newtheoremstyle{paperplain}{0.65em}{0.65em}{\itshape}{}%
  {\bfseries}{.}{0.5em}{}
\theoremstyle{paperplain}
\newtheorem{theorem}{Theorem}[section]
\newtheorem{proposition}[theorem]{Proposition}
\newtheorem{lemma}[theorem]{Lemma}
\newtheorem{corollary}[theorem]{Corollary}
\newtheoremstyle{paperremark}{0.65em}{0.65em}{\normalfont}{}%
  {\bfseries}{.}{0.5em}{}
\theoremstyle{paperremark}
\newtheorem{remark}[theorem]{Remark}
\newtheorem{example}[theorem]{Example}

\newcommand{\runningauthors}{Guohuan Qiu and Jin Yan}
\newcommand{\runningtitle}{}
\let\originaltitle\title
\renewcommand{\title}[2][]{%
  \originaltitle{#2}%
  \if\relax\detokenize{#1}\relax
    \gdef\runningtitle{#2}%
  \else
    \gdef\runningtitle{#1}%
  \fi
}

\numberwithin{equation}{section}
\numberwithin{figure}{section}
\newcommand{\R}{\mathbb R}
\newcommand{\Sym}{\operatorname{Sym}}
\newcommand{\rank}{\operatorname{rank}}
\newcommand{\diag}{\operatorname{diag}}
\newcommand{\dist}{\operatorname{dist}}
\newcommand{\ov}{\overline}
\newcommand{\norm}[1]{\left\|#1\right\|}
\newcommand{\cA}{\mathcal A}
\newcommand{\cL}{\mathcal L}
\title[Boundary regularity of $k$-Hessian eigenfunctions]{\textbf{Boundary regularity of $k$-Hessian eigenfunctions}}

\author{\textbf{Guohuan Qiu}\and\textbf{Jin Yan}}
\date{}

\makeatletter
\newcommand{\printfirstpagebottommatter}{%
  \begingroup
  \renewcommand{\thefootnote}{}%
  \renewcommand{\@makefntext}[1]{\noindent##1}%
  \footnotetext[0]{%
    \textit{Keywords.}
    $k$-Hessian equation; eigenfunction;
    boundary regularity; uniform ellipticity.
    \par\smallskip
    \textit{2020 Mathematics Subject Classification.}
    35J60, 35J70, 35P30.%
  }%
  \endgroup
}
\makeatother

\newcommand{\printauthorinformation}{%
  \par\bigskip
  \begingroup
  \small
  \setlength{\parindent}{0pt}%

  \noindent
  \textbf{Guohuan Qiu}\par
  State Key Laboratory of Mathematical Sciences,
  Academy of Mathematics and Systems Science,
  Chinese Academy of Sciences,
  Beijing 100190, China; \\
  Institute of Mathematics,
  Academy of Mathematics and Systems Science,
  Chinese Academy of Sciences,
  55 Zhongguancun East Road, Beijing 100190, China.\par
  \textit{Email:}
  \href{mailto:qiugh@amss.ac.cn}{qiugh@amss.ac.cn}.

  \par\medskip

  \noindent
  \textbf{Jin Yan}\par
  Institute of Mathematics,
  Academy of Mathematics and Systems Science,
  Chinese Academy of Sciences,
  55 Zhongguancun East Road, Beijing 100190, China.\par
  \textit{Email:}
  \href{mailto:yanjin@amss.ac.cn}{yanjin@amss.ac.cn}.

  \endgroup
}

\hypersetup{%
  pdftitle={Boundary regularity of k-Hessian eigenfunctions},
  pdfauthor={Guohuan Qiu and Jin Yan},
  pdfsubject={Boundary regularity of k-Hessian eigenfunctions},
  pdfkeywords={k-Hessian equation, eigenfunction, boundary regularity, uniform ellipticity}
}

\begin{document}

\maketitle
\printfirstpagebottommatter
\begin{abstract}
We prove smoothness up to the boundary for $k$-Hessian eigenfunctions on
smooth bounded uniformly convex domains when $2\leq k<n$.
This resolves an open problem in the regularity theory of degenerate
fully nonlinear elliptic equations and extends the boundary
smoothness theorem for Monge--Amp\`ere eigenfunctions. In contrast to the
Monge--Amp\`ere case, the linearized $k$-Hessian operator remains uniformly
elliptic up to the boundary.
\end{abstract}

\section{Introduction}

Let $\sigma_j$ denote the unnormalized elementary symmetric polynomial,
with $\sigma_0=1$, and let
\[
\Gamma_k=\{\lambda\in\R^n:\sigma_j(\lambda)>0,\ 1\leq j\leq k\}.
\]
We use the same notation for the corresponding cone of real symmetric
matrices. Let $\Omega\subset\R^n$ be a smooth bounded uniformly convex
domain. We consider the Hessian eigenvalue problem
\begin{equation}\label{eq:eigen}
\begin{cases}
\sigma_k(D^2u)=\lambda(-u)^k&\text{in }\Omega,\\
u=0&\text{on }\partial\Omega,
\end{cases}
\end{equation}
Here $u<0$, $D^2u\in\Gamma_k$ in $\Omega$, and $\lambda>0$.
The coefficient $\lambda$ is the $k$th power of the eigenvalue when the
operator is written as $\sigma_k^{1/k}$.

For $k=n$, equation \eqref{eq:eigen} is the Monge--Amp\`ere eigenvalue
equation. Lions \cite{Lions} proved the existence and uniqueness of the
eigenvalue and the uniqueness of its convex eigenfunction up to a positive
multiplicative constant. The eigenfunction is smooth in the interior and
belongs to $C^{1,1}(\ov\Omega)$. Tso \cite{Tso} obtained a variational
characterization of the eigenvalue; see also \cite{Le}.
Since the right-hand side vanishes on the boundary, higher boundary
regularity requires a separate argument.

Hong, Huang and Wang \cite{HHW} proved global smoothness in dimension
two. They used the curvature of level sets to control a nonvanishing
Hessian direction and overcome the boundary degeneracy. Savin developed
a localization theory for the boundary sections of degenerate
Monge--Amp\`ere equations. This geometric analysis led to boundary $C^2$
estimates and global $C^2$ regularity of the eigenfunctions in every
dimension \cite[Theorem~1.3]{SavinBoundary}.
Building on these estimates, Le and Savin established boundary Schauder
estimates and global $C^{2,\beta}$ regularity of the eigenfunctions
\cite[Theorems~1.1--1.3]{LS}. They then used hodograph and partial
Legendre transformations, together with Schauder estimates for
Grushin-type equations, to prove global smoothness
\cite[Theorem~1.4]{LS}. This resolved the long-standing boundary
smoothness problem for Monge--Amp\`ere eigenfunctions in all dimensions.

Wang \cite{Wang94} extended the eigenvalue theory to the $k$-Hessian
operator. On a smooth strictly $(k-1)$-convex domain, the eigenvalue is
positive and unique, and the admissible eigenfunction belongs to
$C^{\infty}(\Omega)\cap C^{1,1}(\ov\Omega)$ and is unique up to a
positive multiplicative constant. The precise statement appears in
\cite[Theorem~6.2, p.~211]{Wang09}, whose proof on the same page also states
interior smoothness. Le \cite{Le} obtained a spectral characterization of
the Hessian eigenvalue and studied a nondegenerate inverse iteration.
More recently, Zhang \cite{Zhang} established existence, uniqueness, and
$C^{\infty}(\Omega)\cap C^{1,1}(\ov\Omega)$ regularity for a class of
concave elliptic Hessian eigenvalue problems, including the $k$-Hessian
equation. These results provide the classical eigenfunction used below.

This boundary problem is part of the regularity theory of degenerate
Hessian equations. Ivochkina, Trudinger and Wang \cite{ITW}
obtained global second derivative estimates independent of a positive
lower bound for the right-hand side. Wang and Xu \cite{WX} established
$C^{1,1}$ solvability under weaker structural conditions, and Jiao and
Wang \cite{JW} obtained second order estimates for convex solutions
whose right-hand side $f$ satisfies $f^{1/(k-1)}\in C^{1,1}$.
These works describe the natural $C^{1,1}$ regularity for degenerate
Hessian equations. The additional structure of the eigenvalue equation
allows us to prove higher regularity.

The remaining question is whether the eigenfunction is smooth up to the
boundary when $1<k<n$. This is the natural extension of the global
smoothness problem for Monge--Amp\`ere eigenfunctions to the $k$-Hessian
equation. Le \cite[slide~32]{LeICBS} and He and Huang \cite[p.~3]{HeHuang}
explicitly stated this as an open problem.
We resolve this problem on smooth bounded uniformly convex domains.
In addition to smoothness, we prove that the linearized operator
has a positive lower bound up to the boundary, although
$\sigma_k(D^2u)$ vanishes there.

We write
\begin{equation}\label{eq:Fnotation}
F(M)=\sigma_k(M),\qquad
F^{ij}(M)=\frac{\partial F}{\partial M_{ij}}(M).
\end{equation}
For symmetric matrices $M$ and $\bar M$, this convention means
\[
\left.\frac{d}{ds}F(M+s\bar M)\right|_{s=0}
=\sum_{i,j=1}^nF^{ij}(M)\bar M_{ij}.
\]
Thus $(F^{ij})$ is the Newton tensor of order $k-1$.

\begin{theorem}\label{thm:main}
Let $2\leq k<n$, and let $\Omega\subset\R^n$ be a smooth bounded
uniformly convex domain. Suppose that
\[
u\in C^{\infty}(\Omega)\cap C^{1,1}(\ov\Omega)
\]
solves \eqref{eq:eigen}, and normalize $\norm u_{L^\infty(\Omega)}=1$.
Then there is $\theta>0$ such that
\begin{equation}\label{eq:gap}
\sum_{i,j=1}^nF^{ij}(D^2u(x))\xi^i\xi^j\geq\theta|\xi|^2
\qquad(x\in\Omega,\ \xi\in\R^n).
\end{equation}
Moreover, $u\in C^{\infty}(\ov\Omega)$.
\end{theorem}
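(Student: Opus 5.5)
The plan has two parts. First prove the uniform ellipticity bound \eqref{eq:gap}. Then use it to bootstrap to smoothness via a boundary regularity theory for equations whose right-hand side vanishes to first order at $\partial\Omega$.

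\textbf{Ellipticity near the boundary.} Interior points are harmless: on any compact $K\Subset\Omega$ we have $D^2u\in\Gamma_k$ with $\sigma_k(D^2u)\ge c_K>0$, and $u$ is smooth there, so $(F^{ij})$ is positive definite with a lower bound depending on $K$. The issue is near $\partial\Omega$, where $\sigma_k(D^2u)\to0$. The key point is that for $k<n$, $F^{ij}$ is the Newton tensor $T_{k-1}$. Its eigenvalues are $\sigma_{k-1}(\lambda|i)$, and these stay bounded below once $\sigma_{k-1}$ of many eigenvalues is controlled, even when $\sigma_k$ degenerates.

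So I would show that near any boundary point $x_0$, the Hessian $D^2u$ has at least $n-1$ eigenvalues bounded below by a constant $c>0$, namely in the tangential directions. Concretely, I would aim for
\[
u_{\tau\tau}\ge c \quad\text{near }\partial\Omega \text{ for unit tangential directions } \tau .
\]
On $\partial\Omega$ itself, $u=0$ gives $u_{\tau\tau}=-u_\nu\,\kappa_\tau$, with $u_\nu$ taken with respect to the inner normal. Uniform convexity gives $\kappa_\tau\ge\kappa_0>0$. The Hopf lemma applied to the equation $\sigma_k(D^2u)=\lambda(-u)^k\ge0$, written as a linear inequality, gives $|Du|\ge c$ on $\partial\Omega$; alternatively one can compare with a barrier built from a defining function. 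Together these give a positive lower bound for the tangential block on $\partial\Omega$. Since $u\in C^{1,1}(\ov\Omega)$, I need lower bounds in a neighborhood, not just on $\partial\Omega$, and a Lipschitz bound on $D^2u$ would not suffice for that. So I would get them by a maximum principle for a quantity such as the smallest tangential second derivative along level sets. Equivalently, I would bound the principal curvatures of the level sets $\{u=t\}$ below, in the spirit of \cite{HHW}: apply $L=F^{ij}\partial_{ij}$ to a level-set curvature function plus auxiliary terms.

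Once $n-1$ eigenvalues satisfy $\lambda_1,\dots,\lambda_{n-1}\ge c$ and $|\lambda|\le C$, the constraint $\lambda\in\ov\Gamma_k$ with $k\le n-1$ gives $\sigma_{k-1}(\lambda|i)\ge \theta$ for all $i$. For $i\ne n$ this contains the term $\prod$ of $k-1$ of the positive eigenvalues. The negative part of $\lambda_n$ is controlled because $\sigma_k\ge0$ forces $\lambda_n\ge -C\sigma_k(\lambda')/\sigma_{k-1}(\lambda')$ roughly. This yields \eqref{eq:gap}. \textbf{I expect the level-set curvature / tangential lower bound in a neighborhood to be the main obstacle.} The rest of the argument is algebraic or standard.

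\textbf{Smoothness.} With \eqref{eq:gap}, the equation is uniformly elliptic and concave in the form $\sigma_k^{1/k}(D^2u)=\lambda^{1/k}(-u)$ up to the boundary, with a Lipschitz right-hand side. Concavity of $\sigma_k^{1/k}$ on $\Gamma_k$ persists on a neighborhood of the set of Hessians, since the ellipticity keeps things in a compact region where the operator extends as a uniformly elliptic concave operator. Krylov's boundary $C^{2,\alpha}$ estimate (Evans--Krylov with boundary) then gives $u\in C^{2,\alpha}(\ov\Omega)$.

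One caveat: $\sigma_k^{1/k}$ is not differentiable where $\sigma_k=0$, i.e.\ on $\partial\Omega$. So I would instead work with $F=\sigma_k$ directly, which is smooth on all matrices and has the uniform ellipticity \eqref{eq:gap}. Its concavity is replaced by the fact that $D^2u$ lies in a compact subset of $\ov\Gamma_k$ where $F$ is elliptic. I would then use Safonov/Krylov $C^{2,\alpha}$ estimates for uniformly elliptic equations, after checking that $\sigma_k$ restricted to matrices near $\ov{\{D^2u\}}$ agrees with a concave uniformly elliptic extension; for instance, $\log$-concavity-type arguments are replaced by using $\sigma_k/\sigma_{k-1}$-type concave combinations. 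Differentiating the equation then gives linear uniformly elliptic equations with $C^{\alpha}$ coefficients for the derivatives $D_\ell u$, with right-hand side $k\lambda(-u)^{k-1}(-D_\ell u)$ and zero or smooth boundary data from tangential derivatives. Schauder theory bootstraps to $C^{m,\alpha}(\ov\Omega)$ for all $m$.
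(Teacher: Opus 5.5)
Your algebraic reduction is sound, and it is the same mechanism the paper uses. A tangential block $\geq cI_{n-1}$ gives, by interlacing, $n-1$ eigenvalues $\geq c$. Then $\sigma_k\geq0$ and Newton's inequality bound every $\sigma_{k-1}(\lambda|i)$ from below (compare the computation in Lemma~\ref{lem:quadratics}).

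The gap is the step you flag yourself, and it is essentially the whole theorem. You never specify the level-set quantity, its differential inequality, or how it behaves at the boundary. As stated, the argument cannot start.
\begin{itemize}
\item For $u\in C^{1,1}(\ov\Omega)$ the Hessian has no trace on $\partial\Omega$. So $u_{\tau\tau}=-u_\nu\kappa_\tau$ is only formal, and it says nothing about limits of $D^2u(x^l)$ as $x^l\to\partial\Omega$.
\item A minimum principle on a boundary collar needs the lower bound on the outer edge of the collar. That means knowing the curvatures of $\{u=-\varepsilon\}$ approach those of $\partial\Omega$, which is exactly what is unknown.
\item The operator $F^{ij}\partial_{ij}$ you would use may itself degenerate near $\partial\Omega$; ruling this out is the content of \eqref{eq:gap}. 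So error terms in a curvature computation cannot be absorbed by ellipticity. You also give no computation producing a favorable inequality for $\sigma_k$, $k<n$, without convexity of $u$.
\item Smooth nondegenerate approximations would supply boundary traces. They would not supply the missing estimate, which would have to hold uniformly in the approximation.
\end{itemize}

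The paper replaces this step with a contradiction and blow-up argument. Suppose \eqref{eq:gap} fails along $x^l\to\partial\Omega$. Rescaling at the nearest boundary points by $r^l=\dist(x^l,\partial\Omega)$ gives a $C^{1,1}$ half-space limit $U$ with $\sigma_k(D^2U)=0$ and boundary values $\frac12\sum B_{ij}y^iy^j$, $B>0$. Uniform convexity enters only through these boundary values.

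The tangential Hessian of $U$ for $t>0$ is not a priori $B$. The core of the proof is Theorem~\ref{thm:limits}(i), which shows $U=Q_{B,b}$. It uses a maximum principle for $|p-\beta|^2$ with $p=(D_yU-By)/t$, not for $D^2U$.
\begin{itemize}
\item The equation for $p$ carries the drift $\frac2tF^{in}\partial_i$.
\item Coercivity after normalizing $F^{ij}$ by its trace comes from the kernel structure of Newton tensors ($\rank\geq n-k+1\geq2$ together with $B>0$, Proposition~\ref{prop:coercivity}). It holds even where $F^{ij}$ degenerates.
\item The barrier $1/t$ is annihilated by the operator. It yields an oblique inequality that excludes boundary maxima and boundary layers.
\item Maximizing sequences escaping to infinity are handled by dilation.
\end{itemize}
Savin's small perturbation theorem then upgrades $C^1$ convergence to Hessian convergence. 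So $D^2u(x^l)$ approaches $\bar M_b$, whose linearization is positive definite; this is the rigorous replacement for your tangential lower bound.

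For the regularity half, your outline points the right way but needs the paper's devices.
\begin{itemize}
\item $\sigma_k$ is not concave near these Hessians, so it does not coincide with a concave extension. The paper uses the infimum of supporting linear functions, which vanishes at $(D^2u,u)$ by the support inequality.
\item Krylov's estimate is a priori and presupposes regularity up to the boundary. The paper instead applies Silvestre--Sirakov to the $C^{1,1}$ solution, after the lift $w=e^su$ removes the dependence on $u$.
\item Normal derivatives carry no boundary data. The bootstrap therefore differentiates tangentially and recovers the pure normal derivatives from the equation, using \eqref{eq:gap}.
\end{itemize}
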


The constant $\theta$ may depend on the fixed eigenfunction and the
domain. By the assumed $C^{1,1}$ bound, $(F^{ij})$ is bounded from above. Thus \eqref{eq:gap} states that the polynomial operator
$\sigma_k$ is uniformly elliptic along this solution.
Our assumption on the domain is stronger than the
$(k-1)$-convexity required in the existence theory. We do not assume that
$u$ is convex.

Tian, Wang and Xu \cite{TWX} constructed smooth local solutions with
uniformly elliptic linearization even when the right-hand side vanishes
or changes sign. Here we prove nondegeneracy along a global eigenfunction.
The following examples explain the difference between $k<n$ and $k=n$.

\begin{example}\label{ex:zero-level}
Let $k=2$, $n=3$, and
\[
M_0=\diag(1,1,-1/2).
\]
Then
\[
\sigma_1(M_0)=\frac32>0,\qquad
\sigma_2(M_0)=1-\frac12-\frac12=0.
\]
Thus $M_0\in\ov\Gamma_2\cap\Gamma_1$, while its linearization satisfies
\begin{equation}\label{eq:example-linearization}
(F^{ij}(M_0))
=\sigma_1(M_0)I-M_0
=\diag(1/2,1/2,2)>0.
\end{equation}
For comparison, $\diag(1,1,0)$ has zero determinant and the degenerate
cofactor matrix $\diag(0,0,1)$.
\end{example}

The next example describes how degeneracy can occur and illustrates
Lemma~\ref{lem:newton-limits}\textup{(i)}.

\begin{example}\label{ex:newton-degeneracy}
Let $k=2$ and $n=3$. Then
\[
F^{ij}(M)=\sigma_1(M)\delta_{ij}-M_{ij}.
\]
If $M\in\ov\Gamma_2$ and $\sigma_1(M)=0$, its eigenvalues satisfy
\[
0\leq2\sigma_2(M)=-\sum_{i=1}^3\lambda_i^2.
\]
Thus $M=0$, as asserted in Lemma~\ref{lem:newton-limits}\textup{(i)}.
If instead $(F^{ij}(M))$ is nonzero and degenerate, we may diagonalize
$M$ and arrange that $\lambda_1=\sigma_1(M)$. Hence
$\lambda_2+\lambda_3=0$ and
\[
0\leq\sigma_2(M)=\lambda_2\lambda_3=-\lambda_2^2.
\]
It follows that, in these coordinates,
\[
M=\diag(a,0,0),\qquad
(F^{ij}(M))=\diag(0,a,a),\qquad a>0.
\]
In particular, $M\geq0$, $\rank M=1$, and
$\sum_{h=1}^3M_{ih}F^{h j}(M)=0$, as in
Lemma~\ref{lem:newton-limits}\textup{(i)}. The kernel of $M$ is two-dimensional,
although the kernel of its linearization is only one-dimensional.
\end{example}

The next example shows how uniform boundary convexity excludes
degeneracy when the boundary Hessian exists.
\begin{example}\label{ex:convex-kernel}
Let $k=2$, $n=3$, and suppose that a boundary Hessian
$M\in\ov\Gamma_2$ exists at a uniformly convex boundary point.
The zero boundary condition and the Hopf lemma imply that the tangential
block $B$ is positive definite. Hence
\[
\xi=(\xi',0)\in\ker M
\quad\Longrightarrow\quad
0=\sum_{i,j=1}^2B_{ij}\xi^i\xi^j
\quad\Longrightarrow\quad \xi=0,
\]
and therefore $\dim\ker M\leq1$.
Moreover, $F^{33}(M)=M_{11}+M_{22}>0$. By
Example~\ref{ex:newton-degeneracy}, degeneracy of $(F^{ij}(M))$
would require $\dim\ker M=2$, a contradiction.
Thus $(F^{ij}(M))$ is positive definite.
\end{example}

The next example shows that the full conclusion of
Theorem~\ref{thm:main} does not extend to all strictly mean convex domains.

\begin{example}\label{ex:boundary-geometry}
Let $k=2$, $n=3$, and consider
\[
\Omega=\{x\in\R^3:x_1^2+x_2^4+x_3^2<1\}.
\]
This domain is smooth and bounded. Its principal curvatures are
positive when $x_2\ne0$, and are $1$ and $0$ when $x_2=0$.
Thus $\partial\Omega$ is strictly mean convex.

Let $u$ be the normalized negative eigenfunction. Uniqueness
\cite{Wang09} implies that $u$ is even in $x_1$ and $x_2$.
If Theorem~\ref{thm:main} held on this domain, $u$ would be smooth
up to the boundary. At $q=(0,0,1)$, symmetry and the zero boundary
condition would imply
\[
D^2u(q)=\diag(a,0,c),\qquad a=u_\nu(q)>0.
\]
The equation implies $ac=\sigma_2(D^2u(q))=0$, so $c=0$ and
\[
(F^{ij}(D^2u(q)))=\diag(0,a,a).
\]
This contradicts the uniform ellipticity asserted in
Theorem~\ref{thm:main}.
\end{example}

For $2\leq k<n$, a positive definite tangential block implies
$\dim\ker M\leq1$. By
Lemma~\ref{lem:newton-limits}\textup{(i)}, degeneracy of
$(F^{ij}(M))$ would require
\[
\dim\ker M\geq n-k+1\geq2
\]
and hence contradict this bound. Thus the kernel argument in
Example~\ref{ex:convex-kernel} extends to this range.
For $k=n$, the lower bound is only one, and there is no contradiction.

The main analytical difficulty is that we start with only
$C^{1,1}(\ov\Omega)$ regularity. Boundary convexity ensures positivity
of the tangential quadratic data, but does not provide a boundary trace
of $D^2u$. Thus formally differentiating the boundary condition twice
does not identify the tangential block of an interior Hessian limit.
We must prove that these limits preserve tangential positivity. The main
step is to classify the half-space limits. For positive constant
right-hand sides, Jia and Ma \cite{JiaMa} proved half-space Liouville
theorems under quadratic growth and additional structural assumptions.
Our limits have zero right-hand side, so their classification requires
a separate argument.

We study the tangential gradient error divided by the normal variable.
The rank and kernel properties of Newton tensors imply a coercivity
estimate after normalization, even when their traces tend to zero. The barrier
$1/t$ leads to an oblique boundary inequality for the squared gradient
quotient. This inequality rules out both boundary maxima and boundary
layers when the functions converge to a quadratic polynomial. We then use
a maximum argument and dilate maximizing sequences that escape to
infinity to show that every half-space limit arising from the equation
is a quadratic polynomial.

Convergence to a quadratic polynomial alone does not imply convergence
of Hessians. Savin's small perturbation theorem
\cite[Theorem~1.3, p.~560]{Savin} provides the main regularity step in
this passage. It requires uniform ellipticity only near the reference
Hessian, so we can apply it without already knowing that the Hessians
of the approximating solutions are close to that matrix.
An exact local comparison solution and a monotone extension verify the
hypotheses of the theorem. Its $C^{2,\beta}$ estimate then upgrades
uniform convergence to Hessian convergence and proves \eqref{eq:gap}.
Fan's nonhomogeneous extension \cite[Theorem~1.7, p.~3]{Fan} provides
an alternative application of the same perturbation argument.
A concave uniformly elliptic extension then yields boundary
$C^{2,\alpha}$ regularity, and boundary Schauder estimates control
all higher derivatives.

The regularity result also provides boundary information useful for
geometric inequalities for Hessian eigenvalues. Salani \cite{SalaniMA}
proved a Brunn--Minkowski inequality for the Monge--Amp\`ere eigenvalue, and
related results for the $2$-Hessian equation in dimension three were
obtained by Liu, Ma and Xu \cite{LMX} and Salani \cite{SalaniHessian}.
Recently, Li, Ma, Qiu and Salani \cite{LMQS} proved strict convexity of
$-\log(-u)$ and a Brunn--Minkowski inequality for the $2$-Hessian
eigenvalue in higher dimensions. Theorem~\ref{thm:main} improves the
boundary regularity of the eigenfunctions considered there. It also
implies a direct quantitative lower bound for $D^2[-\log(-u)]$ near the
boundary; see Corollary~\ref{cor:log-collar}. The global convexity
conclusion in \cite{LMQS} uses the interior arguments developed in
that paper.

Section~\ref{sec:algebra} collects the matrix identities and classifies
quadratic solutions of the equation with zero right-hand side and positive tangential
data. We begin the proof of Theorem~\ref{thm:main} in
Section~\ref{sec:gap}: we construct and classify the boundary limits,
prove Hessian convergence, and establish \eqref{eq:gap}. We complete the
proof in Section~\ref{sec:regularity} by establishing higher boundary
regularity.

\section{Algebraic preliminaries}\label{sec:algebra}

We first describe how the Newton tensor and its normalized limits can
degenerate. We then identify the quadratic models with positive
tangential data and prove that their linearizations are positive definite.
Constants $c,C$ may change from line to line.
Superscripts $l$ index sequences.

For a matrix $M$, we use the norm
$\norm M=(\sum_{i,j}M_{ij}^2)^{1/2}$.
We use the Euclidean form of the Newton transformation in
\cite{QZ}:
\begin{equation}\label{eq:Newton-definition}
[T_j(M)]^{ab}
=\frac1{j!}\sum_{i_1,\ldots,i_j=1}^n
\sum_{h_1,\ldots,h_j=1}^n
\delta^{b i_1\cdots i_j}_{a h_1\cdots h_j}
M_{i_1 h_1}\cdots M_{i_j h_j}.
\end{equation}
Here $[T_0(M)]^{ab}=\delta_{ab}$. The generalized Kronecker symbol
$\delta^{i_0\cdots i_j}_{h_0\cdots h_j}$ is the determinant of
the matrix $(\delta_{i_p h_q})_{p,q=0}^j$.
We identify the two tensor forms using the Euclidean metric. In particular,
$F^{ij}(M)=[T_{k-1}(M)]^{ij}$.
We use the same definition in dimension $n-1$ for tangential matrices.
The trace and Euler identities are
\begin{equation}\label{eq:Newton}
\begin{aligned}
\sum_{i=1}^nF^{ii}(M)&=(n-k+1)\sigma_{k-1}(M),\\
\sum_{i,j=1}^nF^{ij}(M)M_{ij}&=k\sigma_k(M).
\end{aligned}
\end{equation}
We use the standard properties of the G\aa rding cone: its convexity,
its preservation under addition of positive semidefinite matrices,
the concavity of $\sigma_k^{1/k}$, and the Newton--Maclaurin
inequalities; see \cite{Wang09}. The supporting inequality states that,
for $M,\bar M\in\Gamma_k$,
\begin{equation}\label{eq:support}
\sum_{i,j=1}^n F^{ij}(M)\bar M_{ij}
\geq k\sigma_k(M)^{(k-1)/k}\sigma_k(\bar M)^{1/k}.
\end{equation}
The closed cone is $\ov\Gamma_k=\{\sigma_j\geq0:1\leq j\leq k\}$.
See also \cite{IPY} for the relation between G\aa rding cones and
ellipticity of Hessian operators.

In a diagonal basis, let $\mu$ be obtained by deleting the
$i$th eigenvalue of $M\in\ov\Gamma_k$. Then
\[
\begin{aligned}
\sigma_j(M+s e_i e_i^T)
&=\sigma_j(M)+s\sigma_{j-1}(\mu)\geq0,
\qquad s\geq0,\ 1\leq j\leq k,\\
\sigma_{j-1}(\mu)
&=\lim_{s\to\infty}\frac{\sigma_j(M+s e_i e_i^T)}s\geq0.
\end{aligned}
\]
Thus $\mu\in\ov\Gamma_{k-1}$ and $(F^{ij}(M))\geq0$.

The nonzero degenerate case in part \textup{(i)} of the next lemma is
the matrix form of the classification of Tian, Wang and Xu
\cite[Theorem~2.5]{TWX}. We include its proof and then treat normalized
Newton tensors along convergent matrix sequences, including when
their traces tend to zero.

\begin{lemma}\label{lem:newton-limits}
The following assertions hold for $2\leq k\leq n$.
\begin{enumerate}
\renewcommand{\labelenumi}{\textup{(\roman{enumi})}}
\item If $M\in\ov\Gamma_k$ and $\sigma_{k-1}(M)=0$, then
$M\geq0$ and $\rank M\leq k-2$.

Assume $k<n$. If $M\in\ov\Gamma_k$ and $(F^{ij}(M))$ is nonzero
and degenerate, then $M\geq0$, $\rank M=k-1$, and
\[
\sum_{h=1}^n M_{ih}F^{h j}(M)=0
\qquad(1\leq i,j\leq n).
\]

\item Suppose that $M^l\in\ov\Gamma_k$, $M^l\to M$,
$\sum_{h=1}^nF^{hh}(M^l)>0$, and
\begin{equation}\label{eq:normalized-limit}
\frac{F^{ij}(M^l)}{\sum_{r=1}^nF^{rr}(M^l)}
\longrightarrow\bar F^{ij}
\qquad(1\leq i,j\leq n).
\end{equation}
Then $(\bar F^{ij})\geq0$, $\sum_{i=1}^n\bar F^{ii}=1$, and
\begin{equation}\label{eq:rank-bound}
\rank(\bar F^{ij})\geq n-k+1.
\end{equation}
If $k<n$ and $(\bar F^{ij})$ is degenerate, then $M\geq0$ and
\begin{equation}\label{eq:normalized-kernel}
\sum_{h=1}^n M_{ih}\bar F^{h j}=0
\qquad(1\leq i,j\leq n).
\end{equation}

\end{enumerate}
\end{lemma}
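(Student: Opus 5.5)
The plan is to work in eigenbases throughout. For a symmetric $M$ with eigenvalues $\lambda$, the tensor $(F^{ij}(M))$ is diagonal in the same basis, with entries $\sigma_{k-1}(\lambda|i)$, where $\lambda|i$ denotes $\lambda$ with the $i$th entry deleted. The tools are the expansion identity $\sigma_j(\lambda)=\lambda_i\sigma_{j-1}(\lambda|i)+\sigma_j(\lambda|i)$, the trace identity \eqref{eq:Newton}, the observation before the lemma that $\lambda|i\in\ov\Gamma_{k-1}$ whenever $\lambda\in\ov\Gamma_k$, and the Newton recursion $T_k(M)=\sigma_k(M)I-MT_{k-1}(M)$.

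\textbf{Part (i), first claim.} I prove by induction on $j\geq1$, in all dimensions, the following statement: if $\lambda\in\ov\Gamma_{j+1}$ and $\sigma_j(\lambda)=0$, then $\lambda\geq0$ and at most $j-1$ entries are nonzero. The lemma is the case $j=k-1$.

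For $j=1$, we have $|\lambda|^2=\sigma_1^2-2\sigma_2\leq0$, so $\lambda=0$.

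For the inductive step, each $\sigma_j(\lambda|i)\geq0$, and these sum to $(n-j)\sigma_j(\lambda)=0$. Hence $\sigma_j(\lambda|i)=0$ for every $i$, and the expansion gives $\lambda_i\sigma_{j-1}(\lambda|i)=0$. If every $\lambda_i=0$ we are done. Otherwise pick $i$ with $\lambda_i\neq0$; then $\sigma_{j-1}(\lambda|i)=0$ with $\lambda|i\in\ov\Gamma_j$. The induction hypothesis in dimension $n-1$ gives $\lambda|i\geq0$ with rank $r\leq j-2$.

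To see $\lambda_i\geq0$, expand $\sigma_{r+1}(\lambda)=\lambda_i\sigma_r(\lambda|i)$. Here $\sigma_r(\lambda|i)>0$ and $\sigma_{r+1}(\lambda)\geq0$, since $r+1\leq j-1$. So $\lambda\geq0$ with at most $r+1\leq j-1$ nonzero entries.

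\textbf{Part (i), second claim.} Suppose $F(M)$ is nonzero and degenerate, say $\sigma_{k-1}(\mu)=0$ for $\mu=\lambda|i$. Then $\mu\in\ov\Gamma_{k-1}$, and $\sigma_k(\mu)=\sigma_k(\lambda)-\lambda_i\sigma_{k-1}(\mu)=\sigma_k(\lambda)\geq0$. Thus $\mu\in\ov\Gamma_k$ in dimension $n-1\geq k$, and the first claim gives $\mu\geq0$ with $\rank\mu\leq k-2$.

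Since $F\neq0$, the trace identity gives $\sigma_{k-1}(\lambda)>0$. Then $\sigma_{k-1}(\lambda)=\lambda_i\sigma_{k-2}(\mu)>0$ forces $\rank\mu=k-2$ and $\lambda_i>0$. Hence $M\geq0$ and $\rank M=k-1$.

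For the kernel identity, note that $MF$ is diagonal with entries $\lambda_j\sigma_{k-1}(\lambda|j)$. When $\lambda_j\neq0$, the vector $\lambda|j$ has rank $k-2$, so $\sigma_{k-1}(\lambda|j)=0$. Every entry therefore vanishes.

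\textbf{Part (ii), first assertions.} Positivity and unit trace of $\bar F$ pass to the limit directly. For the rank bound, choose orthonormal eigenframes of $M^l$ with $\lambda^l_1\geq\dots\geq\lambda^l_n$, and pass to a subsequence so that the frames converge. I then invoke the standard inequality $\sigma_{k-1}(\lambda|i)\geq c(n,k)\sigma_{k-1}(\lambda)$ for $i\geq k$ and $\lambda\in\ov\Gamma_k$; this can be proved by the same deletion arguments, or cited from the Lin--Trudinger type estimates in \cite{Wang09}. It gives $n-k+1$ orthonormal limit directions along which $\bar F\geq c/(n-k+1)$, which is \eqref{eq:rank-bound}.

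\textbf{Part (ii), kernel statement.} If $\sigma_{k-1}(M)>0$, then $\bar F=F(M)/\operatorname{tr}F(M)$ by continuity, and the second claim of (i) applies.

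If $\sigma_{k-1}(M)=0$, the first claim of (i) gives $M\geq0$ with rank at most $k-2$. Dividing the Newton recursion by $(n-k+1)\sigma_{k-1}(M^l)$ gives
\[
\frac{M^lF(M^l)}{\operatorname{tr}F(M^l)}
=\frac{\sigma_k(M^l)I-T_k(M^l)}{(n-k+1)\sigma_{k-1}(M^l)} .
\]
The first term tends to zero by Maclaurin, since $\sigma_k\leq C\sigma_{k-1}^{k/(k-1)}$.

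\textbf{Main obstacle.} The step I expect to be hardest is showing that the entries $\sigma_k(\lambda^l|j)/\sigma_{k-1}(\lambda^l)$ of the remaining term vanish in the limit, in the directions $j$ where $\lambda_j\neq0$; this is exactly where degeneracy of $\bar F$ must enter. I would first try to use degeneracy of $\bar F$ to produce a direction $i$ with $\sigma_{k-1}(\lambda^l|i)=o(\sigma_{k-1}(\lambda^l))$. I would then run the quantitative analogue of the second claim of (i) along the sequence. That analogue has two parts. The first is to show $\lambda^l_i\sigma_{k-2}(\lambda^l|i)\approx\sigma_{k-1}(\lambda^l)$. The second is to bound $\sigma_{k-1}(\lambda^l|i,j)$ relative to $\sigma_{k-2}(\lambda^l|i)$ for the large indices $j$.
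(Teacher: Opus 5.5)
The genuine gap is \eqref{eq:normalized-kernel} in the case $\sigma_{k-1}(M)=0$. There you reduce the claim to showing that every entry of $T_k(M^l)/\sigma_{k-1}(M^l)$ tends to zero, and you do not prove this. What you give is only a tentative plan: extract from the degeneracy of $\bar F$ a direction with $\sigma_{k-1}(\lambda^l|i)=o(\sigma_{k-1}(\lambda^l))$, then run an unstated quantitative version of (i). As written, this case is not established.

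The rest is sound. Part (i) is correct: your induction through $\sum_i\sigma_j(\lambda|i)=(n-j)\sigma_j(\lambda)$ is a slightly simpler variant of the paper's identity $\sum_i\lambda_i^2\sigma_{k-2}(\lambda|i)=-k\sigma_k(M)$, and your second claim is the paper's argument. Your proof of \eqref{eq:rank-bound} is also valid, given the Lin--Trudinger inequality $\sigma_{k-1}(\lambda|i)\geq c(n,k)\sigma_{k-1}(\lambda)$ for $i\geq k$. It produces an explicit $(n-k+1)$-dimensional subspace on which $\bar F$ is positive. The paper instead avoids that external inequality by testing $\bar F$ against $\diag(1,\ldots,1,-\varepsilon,\ldots,-\varepsilon)$ through the supporting inequality \eqref{eq:support}.

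The missing idea is that you need only the trace of $M\bar F$, not its entries, because both factors are positive semidefinite. By Euler's identity (equivalently, take the trace of your display and use $\operatorname{tr}T_k(M^l)=(n-k)\sigma_k(M^l)$), the Maclaurin bound you already have gives
\[
\sum_{a,b=1}^nM_{ab}\bar F^{ab}
=\lim_{l\to\infty}\frac{\sum_{a,b}M^l_{ab}F^{ab}(M^l)}{\sum_hF^{hh}(M^l)}
=\frac{k}{n-k+1}\lim_{l\to\infty}\frac{\sigma_k(M^l)}{\sigma_{k-1}(M^l)}=0.
\]
Since $M\geq0$ by (i) and $\bar F\geq0$, in an eigenbasis of $M$ this reads $\sum_i\lambda_i\bar F^{ii}=0$ with nonnegative terms, so $\lambda_i\bar F^{ii}=0$. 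Then $|\bar F^{ij}|^2\leq\bar F^{ii}\bar F^{jj}$ gives $\lambda_i\bar F^{ij}=0$, which is \eqref{eq:normalized-kernel}.

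Your expectation that the degeneracy of $\bar F$ must enter here is therefore mistaken. When $\sigma_{k-1}(M)=0$ the identity holds for every normalized limit. Degeneracy is used only when $\sigma_{k-1}(M)>0$, to invoke the second claim of (i). Afterwards this also shows that the entrywise limit $T_k(M^l)/\sigma_{k-1}(M^l)\to0$ you were aiming for is true, but proving it directly is not the efficient route.
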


\begin{proof}
For the first assertion in \textup{(i)}, we argue by induction on $k$.
Write $\lambda=(\lambda_1,\ldots,\lambda_n)$ for the eigenvalues of
$M$, and let $\lambda|i$ denote the vector obtained by deleting $\lambda_i$.
When $k=2$,
\[
0\leq\sum_{i=1}^n\lambda_i^2
=\sigma_1(M)^2-2\sigma_2(M)
=-2\sigma_2(M)\leq0,
\]
so $M=0$.

For $k\geq3$, the deletion property implies
$\sigma_{k-2}(\lambda|i)\geq0$, and
\[
\begin{aligned}
0\leq\sum_{i=1}^n\lambda_i^2\sigma_{k-2}(\lambda|i)
&=\sum_{i=1}^n\lambda_i
\bigl(\sigma_{k-1}(M)-\sigma_{k-1}(\lambda|i)\bigr)\\
&=\sigma_1(M)\sigma_{k-1}(M)-k\sigma_k(M)\\
&=-k\sigma_k(M)\leq0.
\end{aligned}
\]
Hence $\lambda_i^2\sigma_{k-2}(\lambda|i)=0$ for every $i$.
If $M\ne0$, then $\sigma_1(M)\geq0$ allows us to choose
$\lambda_1>0$. By the induction hypothesis,
\[
\begin{gathered}
\lambda|1\in\ov\Gamma_{k-1},\qquad
\sigma_{k-2}(\lambda|1)=0,\\
\lambda|1\geq0,\qquad \#\{i\geq2:\lambda_i>0\}\leq k-3.
\end{gathered}
\]
Thus $M\geq0$ and $\rank M\leq k-2$.

Now let $k<n$ and suppose $(F^{ij}(M))$ is nonzero and degenerate.
In a common diagonal basis, arrange
$F^{11}(M)=\sigma_{k-1}(\lambda|1)=0$. Then
\[
\begin{gathered}
\lambda|1\in\ov\Gamma_{k-1},\\
\sigma_k(\lambda|1)
=\sigma_k(M)-\lambda_1\sigma_{k-1}(\lambda|1)
=\sigma_k(M)\geq0.
\end{gathered}
\]
Since $n-1\geq k$, the first assertion applies in dimension $n-1$:
\[
\lambda|1\geq0,\qquad \#\{i\geq2:\lambda_i>0\}\leq k-2.
\]
By \eqref{eq:Newton},
\[
0<\sigma_{k-1}(M)
=\lambda_1\sigma_{k-2}(\lambda|1),
\]
so $\lambda_1>0$, exactly $k-2$ of $\lambda_2,\ldots,\lambda_n$
are positive, and
\[
M\geq0,\qquad \rank M=k-1,\qquad
\lambda_jF^{jj}(M)
=\lambda_j\sigma_{k-1}(\lambda|j)=0.
\]
By orthogonal invariance,
$\sum_{h=1}^nM_{ih}F^{hj}(M)=0$ in the original coordinates.

For \textup{(ii)}, the normalized limit satisfies
\[
(\bar F^{ij})\geq0,\qquad \sum_{i=1}^n\bar F^{ii}=1.
\]
For each $\bar M\in\Gamma_k$, \eqref{eq:support} implies
\[
\begin{aligned}
\sum_{a,b=1}^nF^{ab}(M^l)\bar M_{ab}
&=\lim_{\delta\downarrow0}
\sum_{a,b=1}^nF^{ab}(M^l+\delta I)\bar M_{ab}\geq0,\\
\sum_{a,b=1}^n\bar F^{ab}\bar M_{ab}
&=\lim_{l\to\infty}
\frac{\sum_{a,b=1}^nF^{ab}(M^l)\bar M_{ab}}
{\sum_{h=1}^nF^{hh}(M^l)}\geq0.
\end{aligned}
\]
If $m:=\dim\ker(\bar F^{ij})\geq k$, choose a basis adapted to
this kernel and set
\[
\bar M=\diag\bigl(
\underbrace{1,\ldots,1}_{m},
\underbrace{-\varepsilon,\ldots,-\varepsilon}_{n-m}\bigr).
\]
For $1\leq j\leq k$ and small $\varepsilon>0$,
\[
\sigma_j(\bar M)=\binom mj+O(\varepsilon)>0,
\qquad
0\leq\sum_{a,b=1}^n\bar F^{ab}\bar M_{ab}
=-\varepsilon\sum_{i=1}^n\bar F^{ii}=-\varepsilon,
\]
a contradiction. Therefore $m\leq k-1$, proving \eqref{eq:rank-bound}.

Suppose next that $k<n$ and $(\bar F^{ij})$ is degenerate.
If $\sum_hF^{hh}(M)>0$, then
\[
\bar F^{ij}=\frac{F^{ij}(M)}{\sum_{h=1}^nF^{hh}(M)},
\]
and \textup{(i)} implies $M\geq0$ and \eqref{eq:normalized-kernel}.
Otherwise, \eqref{eq:Newton} and \textup{(i)} imply
\[
\sigma_{k-1}(M^l)\longrightarrow\sigma_{k-1}(M)=0,
\qquad M\geq0.
\]
By Newton--Maclaurin,
\[
0\leq\frac{\sigma_k(M^l)}{\sigma_{k-1}(M^l)}
\leq C\sigma_{k-1}(M^l)^{1/(k-1)}\longrightarrow0.
\]
Consequently,
\[
\begin{aligned}
\sum_{a,b=1}^nM_{ab}\bar F^{ab}
&=\lim_{l\to\infty}
\frac{\sum_{a,b=1}^nM^l_{ab}F^{ab}(M^l)}
{\sum_{h=1}^nF^{hh}(M^l)}\\
&=\frac{k}{n-k+1}\lim_{l\to\infty}
\frac{\sigma_k(M^l)}{\sigma_{k-1}(M^l)}=0.
\end{aligned}
\]
In an eigenbasis of $M$,
\[
\lambda_i\geq0,\qquad
\sum_{i=1}^n\lambda_i\bar F^{ii}=0,\qquad
|\bar F^{ij}|^2\leq\bar F^{ii}\bar F^{jj}.
\]
Thus $\lambda_i\bar F^{ij}=0$ for all $i,j$; returning to the
original coordinates proves \eqref{eq:normalized-kernel}.
\end{proof}

We next classify the quadratic solutions with positive tangential data.
Write $X=(y,t)\in\R_+^n:=\R^{n-1}\times(0,\infty)$ and
$\ov{\R_+^n}=\R^{n-1}\times[0,\infty)$.
For $2\leq k<n$, a positive definite $(n-1)\times(n-1)$ matrix $B$, and
$b\in\R^{n-1}$, set
\begin{equation}\label{eq:quadratics}
\begin{aligned}
c_b&=\frac{\displaystyle\sum_{i,j=1}^{n-1}[T_{k-2}(B)]^{ij}b^ib^j
-\sigma_k(B)}{\sigma_{k-1}(B)},\\[3pt]
\bar M_b&=\begin{pmatrix}B&b\\b^T&c_b\end{pmatrix},\\[3pt]
Q_{B,b}(y,t)&=\frac12\sum_{i,j=1}^{n-1}B_{ij}y^iy^j
+t\sum_{i=1}^{n-1}b^iy^i+\frac12c_b t^2.
\end{aligned}
\end{equation}

The positive definite linearizations of these quadratic models allow us
to use them in the perturbation argument in Section~\ref{sec:savin}.

\begin{lemma}\label{lem:quadratics}
Let $2\leq k<n$ and $B>0$. A quadratic polynomial $Q$ satisfies
\[
Q(y,0)=\frac12\sum_{i,j=1}^{n-1}B_{ij}y^i y^j,
\qquad DQ(0)=0,\qquad \sigma_k(D^2Q)=0
\]
if and only if $Q=Q_{B,b}$ for a unique $b\in\R^{n-1}$.
Its Hessian satisfies
\begin{equation}\label{eq:quadratic-positive}
\bar M_b\in\ov\Gamma_k,\qquad
\sigma_j(\bar M_b)>0\quad(1\leq j<k),\qquad
(F^{ij}(\bar M_b))>0.
\end{equation}
More precisely, if
\[
B\geq\kappa I_{n-1},\qquad
\bar M_b\leq L I_n,\qquad 0<\kappa\leq L,
\]
then
\begin{equation}\label{eq:quadratic-sigma-bound}
\sigma_j(\bar M_b)
\geq \frac{k-j}{k}\binom{n}{j}\kappa^j
\qquad(1\leq j<k),
\end{equation}
and
\begin{equation}\label{eq:quadratic-ellipticity}
\frac1k\binom{n-2}{k-1}\frac{\kappa^k}{L}\,I_n
\leq (F^{ij}(\bar M_b))
\leq \binom{n-1}{k-1}L^{k-1}I_n.
\end{equation}
\end{lemma}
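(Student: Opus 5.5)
The plan is to expand $\sigma_k$ of a bordered matrix along its last row and column. For
\[
M=\begin{pmatrix}B&b\\b^T&c\end{pmatrix}
\]
the standard expansion gives
\[
\sigma_k(M)=\sigma_k(B)+c\,\sigma_{k-1}(B)-\sum_{i,j=1}^{n-1}[T_{k-2}(B)]^{ij}b^ib^j .
\]
This follows either by differentiating in the entries $M_{in},M_{nn}$ (the function is affine in $c$ and quadratic in $b$), or from the generalized Kronecker definition \eqref{eq:Newton-definition}. Since $B>0$ we have $\sigma_{k-1}(B)>0$, because $k-1\le n-1$.

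\emph{Characterization.} The condition $Q(y,0)=\frac12 B y\cdot y$ fixes the tangential block of $D^2Q$. The condition $DQ(0)=0$ forces $Q$ to have no linear terms, and $Q(0)=0$ follows from the first condition. So $Q=\frac12 By\cdot y+t\,b\cdot y+\frac12 c t^2$ with $b,c$ free. The equation $\sigma_k=0$ is then linear in $c$ with nonzero coefficient $\sigma_{k-1}(B)$, so $c=c_b$. Uniqueness of $b$ holds because $b$ is read off the mixed derivatives.

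\emph{Positivity of $\sigma_j$ for $j<k$.} Write $\bar M_b$ as a bordered matrix. I would use Cauchy interlacing, or more simply the deletion property. Deleting the last row and column gives $B>0$, but this alone does not directly give $\sigma_j(\bar M_b)>0$. Instead I would use the Newton--Maclaurin type ratio. Consider the path $s\mapsto \bar M_b+sE_{nn}$, with $s\ge0$. Along it, $\sigma_k=s\sigma_{k-1}(B)>0$ for $s>0$. Interlacing shows $\bar M_b$ has at least $n-1$ eigenvalues $\ge\kappa$, hence at most one negative eigenvalue. Then $\bar M_b+sE_{nn}$ has $n-1$ eigenvalues $\ge\kappa$ and $\sigma_k>0$, so it lies in $\Gamma_k$; the connected component of $\{\sigma_k>0\}$ containing large $s$ is $\Gamma_k$, and the path stays in it. Letting $s\to0$ gives $\bar M_b\in\ov\Gamma_k$.

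For the quantitative bound \eqref{eq:quadratic-sigma-bound}, order the eigenvalues as $\lambda_1\ge\dots\ge\lambda_{n-1}\ge\kappa$, with $\lambda_n$ possibly negative. I would write
\[
\sigma_j(\lambda)=\sigma_j(\lambda')+\lambda_n\sigma_{j-1}(\lambda'),
\]
where $\lambda'$ is the vector of the first $n-1$ eigenvalues. The equation $\sigma_k=0$ gives $\lambda_n=-\sigma_k(\lambda')/\sigma_{k-1}(\lambda')$. Hence
\[
\sigma_j(\lambda)=\sigma_j(\lambda')-\frac{\sigma_k(\lambda')\,\sigma_{j-1}(\lambda')}{\sigma_{k-1}(\lambda')} .
\]
Newton--Maclaurin for the positive vector $\lambda'$ gives $\sigma_k\sigma_{j-1}\le c_{n,j,k}\,\sigma_{k-1}\sigma_j$, where $c_{n,j,k}$ is the ratio of binomial factors $<1$. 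This yields $\sigma_j(\lambda)\ge(1-c)\sigma_j(\lambda')$, and $\sigma_j(\lambda')\ge\binom{n-1}{j}\kappa^j$. I expect the main bookkeeping obstacle to be matching the exact constant $\frac{k-j}{k}\binom nj$. I would try the normalized Maclaurin quotients, $1-c=\frac{(k-j)n}{k(n-j)}$ or similar, and combine this with $\binom{n-1}{j}\frac{n}{n-j}=\binom nj$. This gives exactly $\frac{k-j}{k}\binom nj\kappa^j$ when $c=\frac{j(n-k)}{k(n-j)}$, which is what Newton's inequalities yield.

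\emph{Ellipticity \eqref{eq:quadratic-ellipticity}.} The upper bound follows from diagonalizing: $F^{ii}=\sigma_{k-1}(\lambda|i)$, and in absolute value this is at most $\binom{n-1}{k-1}L^{k-1}$ once we use $\bar M_b\in\ov\Gamma_k$ (so $F\ge0$) and $F^{ii}\le\sigma_{k-1}$ of the nonnegative part.

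For the lower bound, take an eigenvalue $\lambda_i$ with $i\le n-1$. Then $F^{ii}=\sigma_{k-1}(\lambda|i)$, and $\lambda|i$ contains $n-2$ entries $\ge\kappa$ plus $\lambda_n$. Using the Euler identity $\sigma_k=\lambda_iF^{ii}+\sigma_k(\lambda|i)=0$ with $\lambda_i\le L$ gives $F^{ii}=-\sigma_k(\lambda|i)/\lambda_i$. This is awkward, so instead I would use the general identity $F^{ii}=\sigma_{k-1}(\lambda|i)\ge\sigma_{k-1}(\lambda)/\ldots$.

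Cleaner: $\sigma_k(\lambda)=\lambda_n\sigma_{k-1}(\lambda|n)+\sigma_k(\lambda|n)=0$ gives $F^{nn}=\sigma_{k-1}(\lambda|n)\ge\binom{n-1}{k-1}\kappa^{k-1}$. For $i<n$, expand in $\lambda_n$:
\[
F^{ii}=\sigma_{k-1}(\lambda|in)+\lambda_n\sigma_{k-2}(\lambda|in).
\]
Multiply by $\lambda_i$, use $\sigma_k=0$, and obtain
\[
\lambda_iF^{ii}=-\sigma_k(\lambda|i)=-\bigl(\sigma_k(\lambda|in)+\lambda_n\sigma_{k-1}(\lambda|in)\bigr).
\]
Substituting $\lambda_n$ from the $\sigma_k=0$ relation and applying Newton's inequality in the $n-2$ positive variables $\lambda|in$ gives $\lambda_iF^{ii}\ge\frac1k\sigma_k\cdots\ge\frac1k\binom{n-2}{k-1}\kappa^k$. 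Dividing by $\lambda_i\le L$ gives the stated bound. This computation is where I expect the real work to be.
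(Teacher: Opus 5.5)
Your characterization, the bound \eqref{eq:quadratic-sigma-bound}, and the upper bound in \eqref{eq:quadratic-ellipticity} follow the paper's argument. Interlacing gives $n-1$ eigenvalues $\mu\geq\kappa$. The equation $\sigma_k=0$ forces the remaining eigenvalue to be $-\sigma_k(\mu)/\sigma_{k-1}(\mu)$. Newton's inequality with constant $\frac{j(n-k)}{k(n-j)}$, combined with $\frac{n}{n-j}\binom{n-1}{j}=\binom nj$, then gives exactly the stated constant. The path $s\mapsto\bar M_b+sE_{nn}$ is correct but redundant: the explicit formula already gives $\sigma_j(\bar M_b)>0$ for $j<k$ and $\sigma_k(\bar M_b)=0$, hence $\bar M_b\in\ov\Gamma_k$.

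The gap is the lower bound in \eqref{eq:quadratic-ellipticity} for $i<n$. This is the part of the lemma that the later uniform-ellipticity arguments rely on, and you only assert it through the chain ``$\lambda_iF^{ii}\geq\frac1k\sigma_k\cdots$''. Write $\nu=\lambda|in$ and $\mu=\lambda|n$. Carrying out your substitution, the cross terms cancel and you get
\[
\lambda_iF^{ii}=\frac{\lambda_i\bigl(\sigma_{k-1}(\nu)^2-\sigma_k(\nu)\sigma_{k-2}(\nu)\bigr)}{\lambda_i\sigma_{k-2}(\nu)+\sigma_{k-1}(\nu)} .
\]
The factor $\lambda_i$ just cancels, and you land on the paper's identity. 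Newton's inequality in $n-2$ variables bounds the numerator below by $\frac{n-1}{k(n-k)}\lambda_i\sigma_{k-1}(\nu)^2$. It says nothing about the denominator $\sigma_{k-1}(\mu)$, and without controlling it you get no explicit constant.

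The missing ingredient is the monomial-count inequality $(k-1)\sigma_{k-1}(\nu)\geq(n-k)\kappa\,\sigma_{k-2}(\nu)$, which holds because every $\nu_j\geq\kappa$. There are two ways to finish.
\begin{itemize}
\item As in the paper: combine it with $\lambda_i\leq L$ and $\kappa\leq L$ to get $\sigma_{k-1}(\mu)\leq\frac{(n-1)L}{(n-k)\kappa}\sigma_{k-1}(\nu)$. Then $F^{ii}\geq\frac{\kappa}{kL}\sigma_{k-1}(\nu)\geq\frac1k\binom{n-2}{k-1}\frac{\kappa^k}{L}$.
\item Keeping your factor $\lambda_i$: the quotient $\lambda_i\sigma_{k-1}(\nu)^2/(\lambda_i\sigma_{k-2}(\nu)+\sigma_{k-1}(\nu))$ is increasing in $\lambda_i$. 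So $\lambda_i\geq\kappa$ and the same inequality give your claimed $\lambda_iF^{ii}\geq\frac{\kappa}{k}\sigma_{k-1}(\nu)\geq\frac1k\binom{n-2}{k-1}\kappa^k$, and then you divide by $\lambda_i\leq L$.
\end{itemize}
Either way, this step is where the ratio $\kappa/L$ in the statement comes from, and it is absent from your sketch.

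You should also note that the eigenbasis entry $F^{nn}=\sigma_{k-1}(\mu)\geq\binom{n-1}{k-1}\kappa^{k-1}$ dominates the claimed lower bound, because $\kappa\leq L$. With these lines added, your proof is complete and coincides with the paper's.
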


\begin{proof}
The boundary data and $DQ(0)=0$ imply
\[
Q(y,t)=\frac12\sum_{i,j=1}^{n-1}B_{ij}y^iy^j
+t\sum_{i=1}^{n-1}b^iy^i+\frac c2t^2.
\]
Expansion by principal minors yields
\begin{equation}\label{eq:block}
\begin{aligned}
\sigma_k\begin{pmatrix}B&b\\b^T&c\end{pmatrix}
&=\sigma_k(B)+c\sigma_{k-1}(B)\\
&\quad-\sum_{i,j=1}^{n-1}[T_{k-2}(B)]^{ij}b^ib^j.
\end{aligned}
\end{equation}
Since $\sigma_{k-1}(B)>0$, the equation is equivalent to $c=c_b$.
Differentiating \eqref{eq:block}, we also obtain
\begin{equation}\label{eq:normal-column}
\begin{aligned}
F^{an}(\bar M_b)&=-\sum_{i=1}^{n-1}[T_{k-2}(B)]^{ai}b^i
\qquad(a<n),\\
F^{nn}(\bar M_b)&=\sigma_{k-1}(B).
\end{aligned}
\end{equation}

By interlacing, $\bar M_b$ has $n-1$ positive eigenvalues
$\mu_1,\ldots,\mu_{n-1}$. Since $k\leq n-1$ and
$\sigma_k(\bar M_b)=0$, the remaining eigenvalue is $-r<0$ and
\[
r=\frac{\sigma_k(\mu)}{\sigma_{k-1}(\mu)},\qquad
\mu=(\mu_1,\ldots,\mu_{n-1}).
\]
Under the stated bounds, interlacing and Newton's inequalities imply
\[
\kappa\leq\mu_i\leq L,\qquad
r\leq\frac{j(n-k)}{k(n-j)}
\frac{\sigma_j(\mu)}{\sigma_{j-1}(\mu)}
\quad(1\leq j<k).
\]
Hence
\[
\begin{aligned}
\sigma_j(\bar M_b)
&=\sigma_j(\mu)-r\sigma_{j-1}(\mu)\\
&\geq\frac{n(k-j)}{k(n-j)}\sigma_j(\mu)
\geq\frac{k-j}{k}\binom nj\kappa^j>0.
\end{aligned}
\]
This proves \eqref{eq:quadratic-sigma-bound} and
$\bar M_b\in\ov\Gamma_k\cap\Gamma_{k-1}$.

In the eigenbasis $(\mu_1,\ldots,\mu_{n-1},-r)$, write $\mu|i$
for the vector obtained by deleting $\mu_i$. For $i<n$, Newton's
inequality in dimension $n-2$ implies
\[
\begin{aligned}
F^{ii}(\bar M_b)
&=\sigma_{k-1}(\mu|i)-r\sigma_{k-2}(\mu|i)\\
&=\frac{\sigma_{k-1}(\mu|i)^2
-\sigma_k(\mu|i)\sigma_{k-2}(\mu|i)}{\sigma_{k-1}(\mu)}\\
&\geq\frac{n-1}{k(n-k)}
\frac{\sigma_{k-1}(\mu|i)^2}{\sigma_{k-1}(\mu)}.
\end{aligned}
\]
When $k=n-1$, the term $\sigma_k(\mu|i)$ is zero.
Since $\mu_j\geq\kappa$, counting monomials yields
\[
(k-1)\sigma_{k-1}(\mu|i)
\geq(n-k)\kappa\sigma_{k-2}(\mu|i).
\]
Therefore
\[
\begin{aligned}
\sigma_{k-1}(\mu)
&=\mu_i\sigma_{k-2}(\mu|i)+\sigma_{k-1}(\mu|i)\\
&\leq\frac{(n-1)L}{(n-k)\kappa}\sigma_{k-1}(\mu|i),\\
F^{ii}(\bar M_b)
&\geq\frac{\kappa}{kL}\sigma_{k-1}(\mu|i)
\geq\frac1k\binom{n-2}{k-1}\frac{\kappa^k}{L}.
\end{aligned}
\]
The upper bounds and the normal entry satisfy
\[
\begin{gathered}
F^{ii}(\bar M_b)\leq\sigma_{k-1}(\mu|i)
\leq\binom{n-2}{k-1}L^{k-1}\qquad(i<n),\\
\binom{n-1}{k-1}\kappa^{k-1}
\leq F^{nn}(\bar M_b)=\sigma_{k-1}(\mu)
\leq\binom{n-1}{k-1}L^{k-1}.
\end{gathered}
\]
These estimates prove \eqref{eq:quadratic-ellipticity}.
For arbitrary $B>0$ and $b$, choose $\kappa,L$ with
$B\geq\kappa I$ and $\bar M_b\leq LI$ to obtain
\eqref{eq:quadratic-positive}.
\end{proof}

\section{Proof of Theorem~\ref{thm:main}: uniform ellipticity}\label{sec:gap}

We begin the proof of Theorem~\ref{thm:main} by establishing \eqref{eq:gap}.
We argue by contradiction. A dilation at the boundary produces a
half-space limit. Classifying this limit and proving Hessian convergence
rule out degeneracy of the linearization. We prove smoothness in
Section~\ref{sec:regularity}.

\subsection{Boundary dilation}\label{sec:boundary-dilation}

Since $\Delta u>0$, $u<0$ in $\Omega$, and $u=0$ on $\partial\Omega$,
the Hopf lemma \cite[Lemma~3.4, p.~34]{GT} and compactness imply
\begin{equation}\label{eq:Hopf-bounds}
0<a_*\leq u_\nu\leq a^*<\infty\quad\text{on }\partial\Omega,
\end{equation}
where $\nu$ is the outward unit normal. Fix a bound
$\norm{D^2u}_{L^\infty(\Omega)}\leq K_0$.
If \eqref{eq:gap} fails, choose $x^l\in\Omega$ such that
\[
\lambda_{\min}\bigl((F^{ij}(D^2u(x^l)))\bigr)\longrightarrow0.
\]
By interior ellipticity, after passing to a subsequence,
\[
r^l:=\dist(x^l,\partial\Omega)\downarrow0.
\]
Let $q^l$ be the nearest boundary point. Translate $q^l$ to $0$
and rotate the inward normal to $e_n$. Near the origin, the domain lies above
\begin{equation}\label{eq:local-boundary-graph}
x^n=\rho^l(x'),\qquad \rho^l(0)=0,\qquad D\rho^l(0)=0,
\end{equation}
and $x^l=r^le_n$ in these coordinates. Denote the transformed
function by $u^l$ and set
\[
a^l=-\partial_nu^l(0)=u_\nu(q^l),\qquad
a_*\leq a^l\leq a^*,\qquad Du^l(0)=-a^le_n.
\]
Introduce
\begin{equation}\label{eq:dilated-coordinates}
X=(y,t)=\frac{x}{r^l},\qquad x'=r^ly,\qquad x^n=r^lt,
\end{equation}
and define
\begin{equation}\label{eq:boundary-dilation}
U^l(X)=\frac{u^l(r^lX)+a^lr^lt}{(r^l)^2}.
\end{equation}
Taylor's inequality and differentiation imply
\[
\begin{gathered}
|u^l(x)+a^lx^n|\leq\frac{K_0}{2}|x|^2,\qquad
|U^l(X)|\leq\frac{K_0}{2}|X|^2,\\
U^l(0)=0,\qquad D_XU^l(0)=0,\qquad
|D_XU^l(X)|\leq K_0|X|,
\end{gathered}
\]
and
\begin{equation}\label{eq:rescaled-hessian}
D_X^2U^l(X)=D_x^2u^l(r^lX),\qquad
\norm{D_X^2U^l}\leq K_0.
\end{equation}
The rescaled domain lies above
\begin{equation}\label{eq:rescaled-boundary}
t=\frac{\rho^l(r^ly)}{r^l}.
\end{equation}
On every bounded tangential set,
\[
\begin{gathered}
\frac{\rho^l(r^ly)}{r^l}=O(r^l|y|^2),\qquad
D_y\frac{\rho^l(r^ly)}{r^l}=O(r^l|y|),\\
D_y^2\frac{\rho^l(r^ly)}{r^l}=r^lD^2\rho^l(r^ly)=O(r^l).
\end{gathered}
\]
Thus the rescaled domains converge locally to $\R_+^n$.

On the rescaled boundary,
\[
U^l\left(y,\frac{\rho^l(r^ly)}{r^l}\right)
=\frac{a^l\rho^l(r^ly)}{(r^l)^2}.
\]
Uniform convexity, \eqref{eq:Hopf-bounds}, and Taylor's inequality imply
\[
\kappa I_{n-1}\leq a^lD^2\rho^l(0)\leq K_0I_{n-1}.
\]
After passing to a subsequence,
\begin{equation}\label{eq:boundary-data-limit}
a^lD^2\rho^l(0)\longrightarrow B,\qquad
\kappa I_{n-1}\leq B\leq K_0I_{n-1},
\end{equation}
and
\[
\frac{a^l\rho^l(r^ly)}{(r^l)^2}
=\frac12\sum_{i,j=1}^{n-1}a^l\partial_{ij}\rho^l(0)y^iy^j
+O(r^l|y|^3)
\longrightarrow\frac12\sum_{i,j=1}^{n-1}B_{ij}y^iy^j.
\]
Figure~\ref{fig:boundary-dilation} shows how the boundary height and
boundary value scale differently.

To obtain compactness up to $t=0$, consider the pullbacks
\[
(y,t)\longmapsto
U^l\left(y,t+\frac{\rho^l(r^ly)}{r^l}\right).
\]
The coordinate maps converge to the identity in $C^2$ locally.
The pullbacks have common local $C^{1,1}$ bounds and hence, after passing to
a subsequence, converge in $C^1_{\mathrm{loc}}(\ov{\R_+^n})$ to $U$.
On interior compact sets, the original functions $U^l$ have the same limit.
Consequently,
\[
\begin{gathered}
U(y,0)=\frac12\sum_{i,j=1}^{n-1}B_{ij}y^iy^j,\qquad
U(0)=0,\qquad DU(0)=0,\\
D^2U^l\overset{*}{\rightharpoonup}D^2U
\quad\text{in }L^\infty_{\mathrm{loc}}(\R_+^n),\qquad
\norm{D^2U}_{L^\infty(\R_+^n)}\leq K_0.
\end{gathered}
\]

\begin{figure}[htbp]
\centering
\begin{tikzpicture}[font=\footnotesize,>=Stealth]
\begin{scope}
  \node[proof note] at (0,3.6) {Translated and rotated coordinates};
  \fill[figblue!5] plot[domain=-2.05:2.05,samples=40] (\x,{.20*\x*\x})
    --(2.05,3.05)--(-2.05,3.05)--cycle;
  \draw[proof axis] (-2.15,0)--(2.2,0) node[right] {$x'$};
  \draw[proof axis] (0,-.35)--(0,3.13) node[right] {$x^n$};
  \draw[proof curve,domain=-2.05:2.05,samples=40]
    plot (\x,{.20*\x*\x});
  \node[proof point,label={below left:$q^l=0$}] at (0,0) {};
  \node[proof point,label={above right:$x^l=r^l e_n$}] at (0,1.6) {};
  \draw[<->,figorange] (.65,.04)--(.65,1.56)
    node[midway,right] {$r^l$};
  \draw[->,thick,figgreen] (0,.1)--(0,.85);
  \node[left,figgreen] at (0,.75) {$e_n$};
  \node[figblue] at (-1.25,2.25) {$\Omega$};
  \node[proof note] at (0,-.7) {$x^n=\rho^l(x')$};
\end{scope}
\begin{scope}[shift={(5,0)}]
  \node[proof note] at (0,3.6) {Dilation by $(r^l)^{-1}$};
  \fill[figblue!5] plot[domain=-2.05:2.05,samples=40] (\x,{.10*\x*\x})
    --(2.05,3.05)--(-2.05,3.05)--cycle;
  \draw[proof axis] (-2.15,0)--(2.2,0) node[right] {$y$};
  \draw[proof axis] (0,-.35)--(0,3.13) node[right] {$t$};
  \draw[proof curve,domain=-2.05:2.05,samples=40]
    plot (\x,{.10*\x*\x});
  \node[proof point,label={below left:$0$}] at (0,0) {};
  \node[proof point,label={above right:$e_n$}] at (0,1.6) {};
  \draw[<->,figorange] (.65,.04)--(.65,1.56)
    node[midway,right] {$1$};
  \node[proof note] at (0,-.7) {$t=(r^l)^{-1}\rho^l(r^l y)$};
\end{scope}
\begin{scope}[shift={(10,0)}]
  \node[proof note] at (0,3.6) {The local limit};
  \fill[figblue!5] (-2.05,0) rectangle (2.05,3.05);
  \draw[proof axis] (-2.15,0)--(2.2,0) node[right] {$y$};
  \draw[proof axis] (0,-.35)--(0,3.13) node[right] {$t$};
  \draw[figblue!30,domain=-2.05:2.05,samples=40]
    plot (\x,{.10*\x*\x});
  \draw[figblue!50,domain=-2.05:2.05,samples=40]
    plot (\x,{.045*\x*\x});
  \draw[figblue!65,domain=-2.05:2.05,samples=40]
    plot (\x,{.015*\x*\x});
  \draw[proof curve] (-2.05,0)--(2.05,0);
  \node[proof point,label={below left:$0$}] at (0,0) {};
  \node[proof point,label={above right:$e_n$}] at (0,1.6) {};
  \node[figblue] at (-1.15,2.25) {$\R_+^n$};
  \node[proof note] at (0,-.7) {$t=0$};
\end{scope}
\draw[figgray!35] (-2.1,-1.15)--(12.1,-1.15);
\node[proof note,text width=6.6cm,anchor=north] at (1.25,-1.30)
  {Boundary height\\[3pt]$\displaystyle\frac{\rho^l(r^l y)}{r^l}\longrightarrow0$};
\node[proof note,text width=7.4cm,anchor=north] at (8.5,-1.30)
  {Boundary value\\[3pt]$\displaystyle
    \frac{a^l\rho^l(r^l y)}{(r^l)^2}
       \longrightarrow\frac12\sum_{i,j=1}^{n-1}B_{ij}y^iy^j$};
\end{tikzpicture}
\caption{The boundary dilation \eqref{eq:boundary-dilation}, in a
two-dimensional normal section. The nearest boundary point becomes
the origin, and the chosen interior point becomes $e_n$.
The rescaled boundary becomes flat, while its quadratic contribution
to the boundary values remains. Adding $a^l r^l t$ removes
the first-order part of $u$.}
\label{fig:boundary-dilation}
\end{figure}

On every interior compact set, $D^2U^l\in\Gamma_k$ for large $l$, and
\begin{equation}\label{eq:blowup-source}
\begin{aligned}
f^l:=\sigma_k(D^2U^l)
&=\lambda\bigl(-u^l(r^lX)\bigr)^k\\
&=\lambda(r^l)^k(a^lt-r^lU^l)^k>0.
\end{aligned}
\end{equation}
Thus $\norm{f^l}_{L^\infty(K)}\leq C_K(r^l)^k\to0$ for
$K\Subset\R_+^n$. The Newton--Maclaurin inequality and differentiation imply
\[
\begin{aligned}
\sum_{i=1}^nF^{ii}(D^2U^l)
&=(n-k+1)\sigma_{k-1}(D^2U^l)\\
&\geq k\binom nk^{1/k}(f^l)^{(k-1)/k},\\
|D_Xf^l|
&\leq k\lambda r^l\bigl(-u^l(r^lX)\bigr)^{k-1}
\norm{Du}_{L^\infty(\Omega)}.
\end{aligned}
\]
Therefore
\begin{equation}\label{eq:normalized-source}
\frac{|D_Xf^l|}{\sum_{i=1}^nF^{ii}(D^2U^l)}
\leq\left(\frac{\lambda}{\binom nk}\right)^{1/k}
\norm{Du}_{L^\infty(\Omega)}r^l\longrightarrow0.
\end{equation}

To show that $U$ satisfies the limiting equation, let $X_0$ be a point
of second differentiability and set $M=D^2U(X_0)$. The perturbed Taylor polynomials
\[
P^\pm(X)=U(X_0)+DU(X_0)\cdot(X-X_0)
+\frac12\sum_{i,j=1}^n(M_{ij}\pm\varepsilon\delta_{ij})
(X^i-X_0^i)(X^j-X_0^j)
\]
touch $U$ strictly from above and below, respectively, on a small ball.
At nearby points of contact from above with $U^l$,
\[
D^2U^l\leq M+\varepsilon I,\qquad
D^2U^l\in\Gamma_k
\quad\Longrightarrow\quad M+\varepsilon I\in\Gamma_k.
\]
Letting $\varepsilon\downarrow0$, we obtain $M\in\ov\Gamma_k$.
If $\sigma_k(M)>0$, the Newton--Maclaurin inequalities imply $M\in\Gamma_k$.
For small $\varepsilon>0$, the nearby contacts from below then satisfy
\[
M-\varepsilon I\in\Gamma_k,\qquad
0<\sigma_k(M-\varepsilon I)
\leq\sigma_k(D^2U^l)=f^l\longrightarrow0,
\]
a contradiction. Hence
\[
D^2U\in\ov\Gamma_k,\qquad \sigma_k(D^2U)=0
\quad\text{a.e. in }\R_+^n.
\]
The classification will use differentiated equations for the smooth
approximations. We therefore retain these approximations and the
normalized estimate \eqref{eq:normalized-source} in the model class below.

For fixed positive bounds $\kappa,K_0$, let $\cA(B)$ be the class of functions
$U\in C^{1,1}_{\mathrm{loc}}(\ov{\R_+^n})$ satisfying
\begin{equation}\label{eq:model-class}
\begin{gathered}
\kappa I_{n-1}\leq B\leq K_0 I_{n-1},\\
U(y,0)=\frac12\sum_{i,j=1}^{n-1}B_{ij}y^iy^j,\\
U(0)=0,\qquad DU(0)=0,\qquad
\norm{D^2U}_{L^\infty(\R_+^n)}\leq K_0,\\
D^2U\in\ov\Gamma_k,\quad \sigma_k(D^2U)=0\quad\text{a.e. in }\R_+^n,
\end{gathered}
\end{equation}
and the following approximation property. On a neighborhood of every
compact subset of $\R_+^n$, there are smooth functions $U^l\to U$
in $C^1$ such that
\begin{equation}\label{eq:approximation}
\begin{gathered}
\norm{D^2U^l}\leq K_0,\qquad D^2U^l\in\Gamma_k,\\
f^l=\sigma_k(D^2U^l)>0,\\
\norm{f^l}_{L^\infty}+
\norm{\frac{|Df^l|}{\sum_{i=1}^nF^{ii}(D^2U^l)}}_{L^\infty}\longrightarrow0.
\end{gathered}
\end{equation}

No boundary condition is imposed on the approximating functions.
Sequences in this class will always have common bounds $\kappa,K_0$.
By \eqref{eq:blowup-source} and \eqref{eq:normalized-source},
the limit constructed above belongs to $\cA(B)$.
The next theorem first classifies the functions in $\cA(B)$ and then proves Hessian convergence for solutions of the rescaled eigenvalue equation.

\begin{theorem}\label{thm:limits}
Let $2\leq k<n$.
\begin{enumerate}
\renewcommand{\labelenumi}{\textup{(\roman{enumi})}}
\item If $U\in\cA(B)$, then $U=Q_{B,b}$ for some $b\in\R^{n-1}$. Moreover,
$\bigl(F^{ij}(D^2Q_{B,b})\bigr)>0$.

\item Let $Q$ be a quadratic polynomial with Hessian $\bar M$ satisfying
\begin{equation}\label{eq:comparison-M}
\bar M\in\Gamma_{k-1},\qquad \sigma_k(\bar M)=0,\qquad
(F^{ij}(\bar M))>0.
\end{equation}
Set $D=B_{1/4}(e_n)$ and write $X=(y,t)$. Fix $\lambda>0$ and
$0<a_*\leq a^*<\infty$,
and let $r^l\downarrow0$, $a^l\in[a_*,a^*]$.
Suppose that $U^l\in C^2(D)$ are $k$-admissible solutions of
\begin{equation}\label{eq:rescaled-equation}
\sigma_k(D^2U^l)=\lambda (r^l)^k(a^l t-r^l U^l)^k
\quad\text{in }D,
\end{equation}
and $U^l\to Q$ uniformly in $D$. Then $D^2U^l\to\bar M$ uniformly
on compact subsets of $B_{1/8}(e_n)$.
\end{enumerate}
\end{theorem}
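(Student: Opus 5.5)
For part \textup{(i)}, the plan is to show that the tangential second derivatives of $U$ are constant, equal to $B$, so that $U$ is quadratic; Lemma~\ref{lem:quadratics} then identifies $U=Q_{B,b}$ and supplies $(F^{ij}(\bar M_b))>0$. The quantity I would study is the tangential gradient error divided by the normal variable,
\[
W(y,t)=\frac{D_yU(y,t)-By}{t},\qquad w=\tfrac12|W|^2 ,
\]
or a fixed tangential direction $e\cdot W$. Since $U(y,0)=\frac12 B y\cdot y$ and $D^2U$ is bounded, $W$ is bounded on $\R^n_+$ by $K_0+|B|$. Proving that $W$ equals a constant $b$ gives $D_yU=By+tb$, hence $U=Q_{B,b}$ after using $\sigma_k(D^2U)=0$ and $DU(0)=0$.

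The first step is the equation for $W$. I would work with the approximations $U^l$ from \eqref{eq:approximation} and with the normalized coefficients
\[
a^{ij}_l=\frac{F^{ij}(D^2U^l)}{\sum_r F^{rr}(D^2U^l)} .
\]
Differentiating $\sigma_k(D^2U^l)=f^l$ tangentially gives $a^{ij}_l\partial_{ij}(\partial_yU^l)=\partial_y f^l/\sum F^{rr}\to0$. Computing $a^{ij}_l\partial_{ij}$ of $(\partial_y U^l-By)/t$ then produces the barrier terms $-2a^{nj}_l\partial_j W/t$ and $+2a^{nn}_lW/t^2$ coming from $1/t$. Passing to limits along subsequences, $a^{ij}_l$ converges to some $\bar F^{ij}$, which by Lemma~\ref{lem:newton-limits}\textup{(ii)} has rank at least $n-k+1\geq2$. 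Where $\bar F$ degenerates, Lemma~\ref{lem:newton-limits}\textup{(ii)} also gives $M\bar F=0$ with $M\geq0$, and this is the coercivity I would exploit.

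The tangential block of $D^2U$ is $B+O(t)$ near $t=0$ in an averaged sense, and $B\geq\kappa I$. Hence $\ker M$ contains no tangential vectors, so $\bar F^{nn}$ cannot vanish. Combined with the trace normalization this should give $\bar F^{nn}\geq c(\kappa,K_0)>0$ in the relevant regime. The resulting inequality for $w$ then has the form
\[
a^{ij}\partial_{ij}w+\frac{2a^{nj}}{t}\,\partial_j w\geq \frac{c\,w}{t^2}-o(1).
\]
In particular it rules out an interior positive maximum of $w-|b_0|^2$-type quantities, and near $t=0$ it acts as an oblique boundary inequality.

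Next comes the maximum argument. Let $b$ be a suitable extremal value, for instance $s=\sup e\cdot W$ for each direction $e$, or the corresponding quantity for $w$. I would take a maximizing sequence $X_m=(y_m,t_m)$ and recenter and dilate by $t_m$, setting
\[
U_m(X)=\frac{U(y_m+t_mX)-\text{affine part}}{t_m^2}.
\]
This class is invariant, so $U_m$ converges to some $\tilde U\in\cA(\tilde B)$ with $\tilde B=B$, and the supremum of $e\cdot W$ is attained at an interior point $e_n$. The strong maximum principle for the limiting linear inequality then forces $e\cdot W\equiv s$, which corresponds to a boundary layer or to $\tilde U$ being quadratic. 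The remaining alternative, a maximum approached at the boundary, is excluded by the oblique inequality together with the $1/t$ barrier.

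Doing the same for the infimum shows that the supremum and infimum coincide, that is, $W$ is constant. The main obstacle is that the limiting coefficients are only weak-$*$ limits along $U^l$ with possibly degenerate $\bar F$. Making the strong maximum principle rigorous in the blow-up limit will need care, and I expect to iterate the blow-up: first show that the blow-up limit is quadratic, then upgrade via part \textup{(ii)}.

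For part \textup{(ii)}, I would apply Savin's small perturbation theorem \cite[Theorem~1.3]{Savin} to $G(N)=\sigma_k(\bar M+N)$ on $B_{1/4}(e_n)$, with the right-hand side $\lambda(r^l)^k(a^lt-r^lU^l)^k$. Its $C^{1}$-norm is $O(r^l)\to0$ and it is positive, since $a^l t\geq a_*/2$ on $D$. The hypothesis of the theorem requires uniform ellipticity of $G$ only in a neighbourhood of $\bar M$, and this holds by \eqref{eq:comparison-M}. Ellipticity far from $\bar M$ is handled by a monotone, uniformly elliptic concave extension that agrees with $\sigma_k$ near $\bar M$; I would also use an exact local comparison solution of the rescaled equation to keep $U^l$ in the admissible range.

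The difficulty here is that Savin's theorem is stated for viscosity solutions of the extended operator. One must check that $U^l$, which are $k$-admissible, are viscosity solutions of the extended equation even though their Hessians are not known a priori to be near $\bar M$. Since $\|U^l-Q\|_{L^\infty}\to0$, the theorem yields $\|U^l-Q\|_{C^{2,\beta}(B_{1/8}(e_n))}\to0$, which gives the claimed Hessian convergence.
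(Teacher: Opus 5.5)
\textbf{Part (i): the inequality for the quotient is false.} Your quotient $W=(D_yU-By)/t$ is the paper's $p$, and part (i) breaks at the inequality you derive for it. With $g=\partial_yU-By$,
\[
\partial_{ij}W=\partial_{ij}g/t-(\delta_{in}\partial_jg+\delta_{jn}\partial_ig)/t^2+2\delta_{in}\delta_{jn}g/t^3 .
\]
Substitute $\partial_jg=t\partial_jW+\delta_{jn}W$. The two $W/t^2$ terms cancel, and you get exactly \eqref{eq:quotient-divergence}, which has no zeroth-order term. Hence $a^{ij}\partial_{ij}w+\frac{2}{t}a^{nj}\partial_jw\geq cw/t^2-o(1)$ cannot hold. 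Indeed $Q_{B,b}\in\cA(B)$ with $b\neq0$ has $W\equiv b$ and violates it; your inequality would force $b=0$ for every limit. The only favorable term is $a^{ij}\partial_iW\cdot\partial_jW$. Your argument against degeneracy of $\bar F$ (``the tangential block is $B+O(t)$ in an averaged sense, so $\bar F^{nn}\geq c$'') gives no pointwise information at interior points of a $C^{1,1}$ function. In fact $F^{nn}(M)=\sigma_{k-1}(C)$ vanishes for $k=2$, $M=e_ne_n^T\in\ov\Gamma_2$, where $\sigma_2(M)=0$.

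\textbf{The missing idea is Proposition~\ref{prop:coercivity}.} Since $t\partial_ip^a=V_{ai}$ with $V=[\,C-B,\ d-p\,]$, suppose $\sum_a\bar F^{ij}V_{ai}V_{aj}=0$ with $V\neq0$. Then $V\bar F=0$, so $\bar F$ is degenerate and $M\bar F=0$. Since $\rank\bar F\geq2$, its range contains some $(\xi',0)\neq0$ with $C\xi'=0=(C-B)\xi'$, contradicting $B>0$. No closeness of $C$ to $B$ is assumed. At an upper contact this gives $2\gamma\norm{V}^2\leq t^2\norm{D^2\phi}+2t|D\phi|$. So test functions with small scaled derivatives force $C\approx B$ and uniform ellipticity of $\bar F$ (Lemma~\ref{lem:contact}). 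This is what drives the interior strong maximum principle. Together with the penalization $\varepsilon/t$ (annihilated by $\cL_{\bar F}$) and $M^l\to\bar M_b$, it also yields the oblique inequality \eqref{eq:weak-oblique}. A single component $e\cdot W$ satisfies a linear equation with no coercive term at all. That is why the paper works with $|p-\beta|^2$ and the reflected center $2\beta-b$.

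\textbf{Your blow-up does not return to $U$.} Dilating by $t_m$ about $(y_m,0)$ puts the maximum at $e_n$. But if $t_m\to0$ or $|y_m|/t_m\to\infty$, constancy of the limiting quotient says nothing about $p$ at a fixed point. Also, the sup and inf blow-ups are unrelated limits, so ``sup equals inf'' does not follow. The paper handles finite maxima directly (Lemmas~\ref{lem:interior-maximum} and~\ref{lem:finite-maximum}). For maximizing sequences at infinity it dilates about the origin by $R^l=|X^l|$. It then uses \eqref{eq:weak-oblique} to exclude a boundary layer, so that $p^l\to b$ uniformly on $\ov B_1\cap\R^n_+$, and reads off $p(Y)=\lim_l p^l(Y/R^l)=b$.

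\textbf{Part (ii): the extension does not resolve the difficulty you flag.} You name the right theorem and the right difficulty, but your fix does not resolve it. An extension that agrees with $\sigma_k$ only near $\bar M$ changes the equation exactly where $D^2U^l$ may lie, since these Hessians are not yet known to be near $\bar M$. So $U^l$ need not solve it. A supporting-plane extension of $\sigma_k^{1/k}$ would moreover lose uniform ellipticity at $\bar M$, where $\sigma_k=0$, and would only give a one-sided inequality. Savin's hypothesis H1 asks only for monotonicity on all of $\Sym(n)$, with uniform ellipticity needed only near the reference. What is needed is an exact reformulation of $\sigma_k(D^2U)=f$, $D^2U\in\Gamma_k$, by such an operator. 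The paper uses $\rho_f(M)$, the unique $s$ with $M-sI\in\Gamma_k$ and $\sigma_k(M-sI)=f$. It is monotone, $1$-Lipschitz, and smooth and uniformly elliptic near $(\bar M,0)$ by \eqref{eq:rho-derivative}.

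The exact comparison solution $v^l$ of \eqref{eq:comparison-equation} is needed not for admissibility, as you suggest. It is needed because Savin's original theorem requires $0$ to be a solution, which fails for $\sigma_k(\bar M+N)-f^l$ since $f^l>0$. The theorem is then applied to $U^l-v^l$ with $\mathcal F^l(M,s,X)=\rho_{f^l(s,X)}(D^2v^l+M)$. Alternatively, Fan's nonhomogeneous version applies to $U^l-Q$ directly.
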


We prove \textup{(i)} and \textup{(ii)} in
Sections~\ref{sec:classification} and~\ref{sec:savin}, respectively.
We then return to the chosen sequence $x^l$ to reach a contradiction.
Figure~\ref{fig:gap-route} summarizes the argument.

\begin{figure}[htbp]
\centering
\begin{tikzpicture}[font=\footnotesize,>=Stealth]
\node[proof box] (a) at (0,2.1)
  {$x^l\to\partial\Omega$\\[3pt]$\lambda_{\min}(F^{ij}(D^2u(x^l)))\to0$};
\node[proof box] (b) at (5.1,2.1)
  {Boundary dilation $U^l$\\[3pt]$x^l\mapsto e_n$};
\node[proof box] (c) at (10.2,2.1)
  {$U\in\cA(B)$\\[3pt]$B>0$};
\node[proof box] (d) at (10.2,-.2)
  {$U=Q_{B,b}$\\[3pt]$(F^{ij}(\bar M_b))>0$};
\node[proof box] (e) at (5.1,-.2)
  {$D^2U^l(e_n)\to\bar M_b$};
\node[proof box,draw=figorange!65,fill=figorange!3] (f) at (0,-.2)
  {Positive limit\\[3pt]Contradiction};
\draw[proof arrow] (a)--node[above] {$(r^l)^{-1}$} (b);
\draw[proof arrow] (b)--node[above] {$C^1_{\mathrm{loc}}$} (c);
\draw[proof arrow] (c)--node[right,align=left]
  {Theorem\\\ref{thm:limits}\textup{(i)}} (d);
\draw[proof arrow] (d)--node[above,align=center,font=\scriptsize] {Theorem\\\ref{thm:limits}\textup{(ii)}} (e);
\draw[proof arrow] (e)--node[above] {continuity} (f);
\end{tikzpicture}
\caption{The proof of \eqref{eq:gap}. Compactness up to the boundary first
produces a $C^1$ limit. Theorem~\ref{thm:limits}\textup{(i)} identifies this limit,
and Theorem~\ref{thm:limits}\textup{(ii)} establishes the Hessian convergence
needed for the contradiction.}
\label{fig:gap-route}
\end{figure}

\subsection{Classification of the half-space limits}\label{sec:classification}

The classification below uses the same combination of derivative
quotients, the maximum principle, and compactness under rescaling
as Savin's classification of global half-space solutions of a degenerate
Monge--Amp\`ere equation
\cite[Section~7, Proposition~7.1]{SavinBoundary}.
Here the quotient uses tangential derivatives and the quadratic boundary
data. Its Newton coefficients may degenerate even in the interior,
so we also need the normalized coercivity estimate and boundary
inequality proved below.

Let $U\in\cA(B)$. The quadratic solutions in
Lemma~\ref{lem:quadratics} satisfy $D_yQ_{B,b}=By+tb$.
This suggests defining
\begin{equation}\label{eq:quotient}
p(y,t)=\frac{D_yU(y,t)-By}{t}.
\end{equation}
The boundary data and the Hessian bound imply
\[
D_yU(y,0)=By,\qquad
|D_yU(y,t)-D_yU(y,0)|\leq K_0t,
\]
and hence
\[
|p(y,t)|\leq K_0.
\]
We write $p=(p^1,\ldots,p^{n-1})$ and $X=(y,t)$, with $X^n=t$.
The quotient is locally Lipschitz in $\R_+^n$; no boundary trace
is assumed.

Suppose $p$ is constant, say $p\equiv b$. Integration in $y$ yields
\[
U(y,t)=\frac12\sum_{i,j=1}^{n-1}B_{ij}y^iy^j
+t\sum_{i=1}^{n-1}b^iy^i+v(t),\qquad v(t)=U(0,t).
\]
By \eqref{eq:block}, the equation becomes
\[
\sigma_k(B)+v''(t)\sigma_{k-1}(B)
-\sum_{i,j=1}^{n-1}[T_{k-2}(B)]^{ij}b^ib^j=0
\quad\text{a.e.}
\]
Thus $v''=c_b$. Since $v(0)=v'(0)=0$, we obtain
$v(t)=c_bt^2/2$ and $U=Q_{B,b}$.
To prove Theorem~\ref{thm:limits}\textup{(i)}, it therefore remains to show
that $p$ is a constant vector.

We first consider interior maxima, then maxima on the flat boundary,
and finally maximizing sequences tending to infinity. Dilation reduces
the last case to the first two.

\medskip
\noindent\emph{Interior maxima.}
Fix $\beta\in\R^{n-1}$ and set $W=|p-\beta|^2$.
We first work with a smooth approximating function, omit the superscript
$l$, and write $f=\sigma_k(D^2U)$.
Differentiating the identity
\[
\partial_a U-\sum_{b=1}^{n-1}B_{ab}y^b=tp^a
\qquad(1\leq a\leq n-1)
\]
twice and using
$\sum_{i,j=1}^nF^{ij}\partial_{ij}(\partial_a U)=\partial_af$,
we obtain
\begin{equation}\label{eq:quotient-divergence}
t\sum_{i,j=1}^nF^{ij}(D^2U)\partial_{ij}p^a
+2\sum_{i=1}^nF^{in}(D^2U)\partial_ip^a
=\partial_af.
\end{equation}
It follows that
\begin{equation}\label{eq:square-equation}
\begin{aligned}
&\sum_{i,j=1}^nF^{ij}(D^2U)\partial_{ij}W
+\frac2t\sum_{i=1}^nF^{in}(D^2U)\partial_iW\\
&\quad=2\sum_{a=1}^{n-1}\sum_{i,j=1}^n
F^{ij}(D^2U)\partial_ip^a\partial_jp^a
+\frac2t\sum_{a=1}^{n-1}(p^a-\beta^a)\partial_af.
\end{aligned}
\end{equation}
By \eqref{eq:approximation}, the normalized source term tends to zero
on interior compact sets. We must still handle the possible degeneracy
of $(F^{ij})$.

De Silva and Savin's higher order boundary Harnack theorem
\cite{DeSilvaSavin} establishes boundary regularity for quotients of
harmonic functions, and more generally of solutions to the same linear
elliptic equation, with a positive denominator in the domain and zero
data on a sufficiently regular boundary portion.
Terracini, Tortone and Vita \cite[Section~3.2]{TTV} study the associated
weighted divergence equations, with uniform ellipticity assumed after
the scalar weight is removed. Here the Newton tensor itself may
degenerate; Proposition~\ref{prop:coercivity} below uses its algebraic
structure to control this additional difficulty.

To compute the derivatives of $p$, write
\[
M=D^2U=\begin{pmatrix}C&d\\d^T&c\end{pmatrix},
\qquad V=[\,C-B,\ d-p\,].
\]
Differentiating \eqref{eq:quotient}, we obtain
\begin{equation}\label{eq:Dp}
t\partial_i p^a=V_{ai}
\qquad(1\leq a\leq n-1,\ 1\leq i\leq n).
\end{equation}
The next proposition uses $B>0$ and the form of $V$ to control it
even when $(F^{ij})$ is degenerate.

\begin{proposition}\label{prop:coercivity}
Fix $K_0,\kappa>0$. Suppose that
\[
\begin{gathered}
M=\begin{pmatrix}C&d\\d^T&c\end{pmatrix}\in\ov\Gamma_k,
\qquad\norm{M}\leq K_0,\\
\kappa I_{n-1}\leq B\leq K_0 I_{n-1},\qquad |p|\leq K_0,
\end{gathered}
\]
and set $V=[\,C-B,\ d-p\,]$. If $2\leq k<n$, there is
$\gamma>0$, depending only on these bounds, $n$, and $k$, such that
\begin{equation}\label{eq:coercivity}
\sum_{a=1}^{n-1}\sum_{i,j=1}^n F^{ij}(M)V_{ai}V_{aj}
\geq\gamma\left(\sum_{i=1}^nF^{ii}(M)\right)\norm{V}^2.
\end{equation}
If $\bar F^{ij}$ is any normalized limit associated with $M$ as in
Lemma~\ref{lem:newton-limits}\textup{(ii)}, then
\begin{equation}\label{eq:normalized-coercivity}
\sum_{a=1}^{n-1}\sum_{i,j=1}^n\bar F^{ij}V_{ai}V_{aj}
\geq\gamma\norm{V}^2.
\end{equation}
\end{proposition}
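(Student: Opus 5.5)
We argue by contradiction and compactness. Both sides of \eqref{eq:coercivity} are homogeneous of degree one in $(F^{ij}(M))$ and of degree two in $V$. Dividing by $\sum_iF^{ii}(M)$, we see that \eqref{eq:coercivity} is equivalent to the inequality $\sum_{a}\sum_{i,j}\hat F^{ij}V_{ai}V_{aj}\geq\gamma\norm V^2$ for the normalized tensor $\hat F=F(M)/\sum_rF^{rr}(M)$. We may assume $\sum_rF^{rr}(M)>0$. Otherwise $\sigma_{k-1}(M)=0$, so $(F^{ij}(M))\geq0$ has zero trace and vanishes, and \eqref{eq:coercivity} is trivial.

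If the estimate fails, there are admissible sequences $M^l,B^l,p^l$ and $V^l\ne0$, normalized by $\norm{V^l}=1$, with $\sum\hat F^{ij}(M^l)V^l_{ai}V^l_{aj}\to0$. By compactness of the bounds, we pass to limits $M^l\to M$, $B^l\to B$ with $\kappa I\leq B$, $p^l\to p$, $V^l\to V$ with $\norm V=1$, and $\hat F(M^l)\to\bar F$. Here $M\in\ov\Gamma_k$, and the limit $V$ again has the form $[\,C-B,\ d-p\,]$ with $C,d$ the blocks of $M$. Lemma~\ref{lem:newton-limits}\textup{(ii)} gives $\bar F\geq0$ with trace one, and the limit inequality reads $\sum_a\sum_{i,j}\bar F^{ij}V_{ai}V_{aj}=0$. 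So every row $V_a=(V_{a1},\dots,V_{an})$ lies in $\ker\bar F$.

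This one argument also proves \eqref{eq:normalized-coercivity}. A normalized limit associated with some $M$ is a diagonal limit of normalized tensors along sequences, so the same compactness applies, with $\gamma$ taken as the minimum of the two contradiction constants. Alternatively, \eqref{eq:normalized-coercivity} follows from \eqref{eq:coercivity} along $M^l$ by passing to the limit, provided \eqref{eq:coercivity} holds with $\gamma$ uniform near $M$.

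If $\bar F>0$, then $V=0$, a contradiction. So $\bar F$ is degenerate, and since $k<n$, Lemma~\ref{lem:newton-limits}\textup{(ii)} gives $M\geq0$ and $M\bar F=0$. Hence the range of $\bar F$ lies in $\ker M$. By \eqref{eq:rank-bound}, $\dim\ker M\geq\rank\bar F\geq n-k+1\geq2$.

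The main step is to derive a contradiction from $\ker M$ being large while the rows of $V$ lie in $\ker\bar F$. Let $P$ be the orthogonal projection onto $\operatorname{range}\bar F\subset\ker M$, which has dimension at least $n-k+1\geq2$. Then $V_a\perp\operatorname{range}\bar F$ for every $a$, and $M\xi=0$ for every $\xi\in\operatorname{range}\bar F$.

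Since $\operatorname{range}\bar F$ has dimension at least $2$, it contains a nonzero vector $\xi=(\xi',0)$ with $\xi'\neq0$. Then $M\xi=0$ gives $C\xi'=0$ and $d\cdot\xi'=0$. Also $V_a\cdot\xi=0$ for each $a$ gives $\sum_{i<n}(C-B)_{ai}\xi^i=0$, that is, $(C-B)\xi'=0$. Combining this with $C\xi'=0$ yields $B\xi'=0$, which contradicts $B\geq\kappa I$.

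The point to check is that the $n$-th column of $V$, namely $d-p$, does not enter the computation for tangential $\xi$. It does not, because $\xi^n=0$. Hence the argument never uses $p$, and the argument closes. The main obstacle is therefore only setting up the compactness correctly in the degenerate-trace regime, which Lemma~\ref{lem:newton-limits}\textup{(ii)} already handles.
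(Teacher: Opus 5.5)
\textbf{Where the proposal breaks.} Your strategy is the paper's: argue by contradiction and pass to limits $M$ and $\bar F$. Then use Lemma~\ref{lem:newton-limits}\textup{(ii)} to get $M\bar F=0$ and $\rank\bar F\geq n-k+1\geq2$, choose a tangential $\xi=(\xi',0)$ in the range of $\bar F$, and reach $B\xi'=0$. However, the normalization step contains a false assertion. You cannot both impose $\norm{V^l}=1$ and keep the structure $V^l=[\,C^l-B^l,\ d^l-p^l\,]$. A failing sequence may have $C^l-B^l\to0$ and $d^l-p^l\to0$. In that case the rescaled matrices $V^l/\norm{V^l}$ converge to some unit $\hat V$, while $[\,C-B,\ d-p\,]=0$. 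So your claim that ``the limit $V$ again has the form $[\,C-B,\ d-p\,]$'' with $\norm V=1$ fails in exactly this case. The paper treats this case separately as $V=0$, and there your step ``$V_a\cdot\xi=0$ gives $(C-B)\xi'=0$'' is not justified as written.

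\textbf{How to repair it.} Split according to whether the unnormalized limit $[\,C-B,\ d-p\,]$ is nonzero or zero.
\begin{itemize}
\item If it is nonzero, $\hat V$ is a positive multiple of it, and your argument goes through unchanged.
\item If it is zero, then $C=B\geq\kappa I$. Since $\hat V\neq0$ has its rows in $\ker\bar F$, $\bar F$ is still degenerate, and the same tangential $\xi$ exists. Then $M\xi=0$ directly gives $B\xi'=C\xi'=0$, a contradiction.
\end{itemize}
The paper handles the zero case without the kernel argument. From $C=B>0$, interlacing gives $M$ at least $n-1$ positive eigenvalues. Lemma~\ref{lem:newton-limits}\textup{(i)} then forces $(F^{ij}(M))>0$, so the normalized tensors are uniformly positive along the sequence. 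With the case split made explicit, your version is a valid and slightly more unified proof.

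\textbf{Second estimate.} Your second route to \eqref{eq:normalized-coercivity}, passing \eqref{eq:coercivity} to the limit along $M^l$, is the paper's. The uniformity you require comes from applying \eqref{eq:coercivity} with the bound $2K_0$, since $\norm{M^l}\leq2K_0$ for large $l$, and then decreasing $\gamma$.
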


\begin{proof}
When $\sum_iF^{ii}(M)=0$, the estimate is immediate.
If \eqref{eq:coercivity} fails, a sequence with positive traces and
$V^l\ne0$ satisfies
\[
\frac{\sum_{a=1}^{n-1}\sum_{i,j=1}^nF^{ij}(M^l)V^l_{ai}V^l_{aj}}
{\bigl(\sum_{r=1}^nF^{rr}(M^l)\bigr)\norm{V^l}^2}
\longrightarrow0.
\]
After passing to a subsequence,
\[
\begin{gathered}
M^l\to M,\qquad B^l\to B,\qquad p^l\to p,\\
V^l\to V=[\,C-B,\ d-p\,],\qquad
\frac{F^{ij}(M^l)}{\sum_{r=1}^nF^{rr}(M^l)}\to\bar F^{ij}.
\end{gathered}
\]
If $V=0$, then $C=B>0$. Interlacing and
Lemma~\ref{lem:newton-limits}\textup{(i)} imply $(F^{ij}(M))>0$.
For some $\theta>0$ and all large $l$,
\[
\left(\frac{F^{ij}(M^l)}{\sum_{r=1}^nF^{rr}(M^l)}\right)
\geq\theta I,
\qquad
\frac{\sum_{a=1}^{n-1}\sum_{i,j=1}^nF^{ij}(M^l)V^l_{ai}V^l_{aj}}
{\bigl(\sum_{r=1}^nF^{rr}(M^l)\bigr)\norm{V^l}^2}
\geq\theta,
\]
a contradiction.

If $V\ne0$, then
\[
(\bar F^{ij})\geq0,\qquad
\sum_{a=1}^{n-1}\sum_{i,j=1}^n\bar F^{ij}V_{ai}V_{aj}=0.
\]
The quadratic form vanishes on each row, so
\[
\sum_{h=1}^nV_{ah}\bar F^{hj}=0
\qquad(1\leq a<n,\ 1\leq j\leq n).
\]
Since $V\ne0$, $(\bar F^{ij})$ is degenerate. By
Lemma~\ref{lem:newton-limits}\textup{(ii)},
\[
\sum_{h=1}^nM_{ih}\bar F^{hj}=0,\qquad
\rank(\bar F^{ij})\geq n-k+1\geq2.
\]
Its range therefore contains a nonzero vector $\xi=(\xi',0)$, and
\[
M\xi=0,\qquad V\xi=0,\qquad C\xi'=0,
\qquad 0=V\xi=(C-B)\xi'=-B\xi'\ne0.
\]
This proves \eqref{eq:coercivity}.

To prove \eqref{eq:normalized-coercivity}, take a sequence
$M^l\to M$ defining the normalized limit and write
\[
M^l=\begin{pmatrix}C^l&d^l\\(d^l)^T&c^l\end{pmatrix},\qquad
V^l=[\,C^l-B,\ d^l-p\,]\longrightarrow V.
\]
For large $l$, $\norm{M^l}\leq2K_0$. By \eqref{eq:coercivity}
with this bound,
\[
\begin{aligned}
\sum_{a=1}^{n-1}\sum_{i,j=1}^n\bar F^{ij}V_{ai}V_{aj}
&=\lim_{l\to\infty}\sum_{a=1}^{n-1}\sum_{i,j=1}^n
\frac{F^{ij}(M^l)V^l_{ai}V^l_{aj}}
{\sum_{r=1}^nF^{rr}(M^l)}\\
&\geq\gamma\lim_{l\to\infty}\norm{V^l}^2
=\gamma\norm V^2.
\end{aligned}
\]
After decreasing $\gamma$ if necessary, both estimates hold with the same constant.
\end{proof}

Under the stated bounds, combining \eqref{eq:Dp} with the proposition yields
\[
\sum_{a=1}^{n-1}\sum_{i,j=1}^n
F^{ij}\partial_ip^a\partial_jp^a
\geq\gamma\left(\sum_{i=1}^nF^{ii}\right)
\sum_{a=1}^{n-1}\sum_{i=1}^n|\partial_ip^a|^2
\]
To pass this estimate to the $C^{1,1}$ limit, we use functions touching
$W$ from above.
For a symmetric matrix $L=(L^{ij})\geq0$, write
\begin{equation}\label{eq:contact-operator}
\cL_L\phi=\sum_{i,j=1}^nL^{ij}\partial_{ij}\phi
+\frac2t\sum_{i=1}^nL^{in}\partial_i\phi.
\end{equation}

The next lemma combines \eqref{eq:square-equation} with the coercivity
estimate and passes the result to upper test functions. When the
derivatives of the test function are small, it also shows that the
normalized Newton tensor at the contact point is uniformly elliptic.

\begin{lemma}\label{lem:contact}
Let $U\in\cA(B)$, $\beta\in\R^{n-1}$, and $W=|p-\beta|^2$.
Suppose that $\phi\in C^2$ touches $W$ from above at
$X=(y,t)\in\R_+^n$. Then there are
\[
M=\begin{pmatrix}C&d\\ d^T&c\end{pmatrix}\in\ov\Gamma_k,
\qquad \norm{M}\leq K_0,\qquad \sigma_k(M)=0,
\]
and a normalized limit $(\bar F^{ij})$ associated with $M$
as in Lemma~\ref{lem:newton-limits}\textup{(ii)}, such that,
with $V=[\,C-B,\ d-p(X)\,]$,
\begin{equation}\label{eq:contact-data}
\begin{gathered}
(\bar F^{ij})\geq0,\qquad
\sum_{i=1}^n\bar F^{ii}=1,\\
\cL_{\bar F}\phi(X)\geq\frac{2\gamma}{t^2}\norm{V}^2.
\end{gathered}
\end{equation}
Moreover, there are $\delta,\theta_0>0$, depending only on
$n,k,K_0,\kappa$, such that
\begin{equation}\label{eq:contact-smallness}
t^2\norm{D^2\phi(X)}+t|D\phi(X)|\leq\delta
\end{equation}
implies
\begin{equation}\label{eq:contact-ellipticity}
\theta_0I\leq(\bar F^{ij})\leq I.
\end{equation}
\end{lemma}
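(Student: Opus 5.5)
The plan is to prove the inequality first for the smooth approximations $U^l$ supplied by \eqref{eq:approximation}, and then pass to the limit at the contact point. Fix a small ball around $X$ in $\R_+^n$; on it $t$ is bounded below. First make the contact strict by replacing $\phi$ with $\phi+\varepsilon|Z-X|^4$; this does not change $D\phi(X)$ or $D^2\phi(X)$.

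\textbf{Step 1 (the inequality for $U^l$).} Let $p^l$ and $W^l=|p^l-\beta|^2$ be built from $U^l$. Since $U^l\to U$ in $C^1$ and $t$ is bounded below, $W^l\to W$ uniformly near $X$. Hence $\phi+c_l$ touches $W^l$ from above at points $X^l\to X$, with $c_l\to0$. At $X^l$ we have $D\phi=DW^l$ and $D^2\phi\geq D^2W^l$. The matrix $F^{ij}(D^2U^l)$ is nonnegative, so \eqref{eq:square-equation} together with \eqref{eq:Dp} and \eqref{eq:coercivity} gives
\[
\sum_{i,j}F^{ij}\partial_{ij}\phi+\frac2t\sum_iF^{in}\partial_i\phi
\geq\frac{2\gamma}{t^2}\Bigl(\sum_iF^{ii}\Bigr)\norm{V^l}^2
+\frac2t(p^l-\beta)\cdot D_yf^l ,
\]
with everything evaluated at $X^l$ and $V^l=[\,C^l-B,\ d^l-p^l(X^l)\,]$. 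The trace $\sum_iF^{ii}(D^2U^l)$ is positive because $f^l>0$, so we may divide by it. After division, the source term is bounded by $C\,|Df^l|/\sum_iF^{ii}$, which tends to $0$ by \eqref{eq:approximation}.

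\textbf{Step 2 (passage to the limit).} Pass to a subsequence along which $D^2U^l(X^l)\to M$ and the normalized tensors converge to $\bar F$. Then $M\in\ov\Gamma_k$, $\norm M\leq K_0$, and $\sigma_k(M)=\lim f^l=0$. The tensor $\bar F$ is a normalized limit in the sense of Lemma~\ref{lem:newton-limits}(ii), so $\bar F\geq0$ and $\sum_i\bar F^{ii}=1$. Since $p^l(X^l)\to p(X)$, we get $V^l\to V$. The derivatives of $\phi$ are continuous, so letting $l\to\infty$ yields \eqref{eq:contact-data}.

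\textbf{Step 3 (ellipticity under smallness).} The upper bound $\bar F\leq I$ holds because the trace of $\bar F$ is $1$. For the lower bound, \eqref{eq:contact-smallness} gives $|t^2\cL_{\bar F}\phi|\leq t^2\norm{D^2\phi}+2t|D\phi|\leq2\delta$. Hence \eqref{eq:contact-data} forces $\norm V^2\leq\delta/\gamma$; in particular $\norm{C-B}\leq(\delta/\gamma)^{1/2}$. Argue by contradiction. Suppose there are sequences with $\delta\to0$ and $\lambda_{\min}(\bar F)\to0$. After compactness (the matrices $M,B$, the tensors $\bar F$ and the vectors $p$ are all bounded), we get limits with $C=B\geq\kappa I$ and $\bar F$ degenerate. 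A diagonal argument shows the limit $\bar F$ is still a normalized limit along some sequence converging to the limit $M$.

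By Lemma~\ref{lem:newton-limits}(ii), degeneracy gives $M\bar F=0$ and $\rank\bar F\geq2$. As in Proposition~\ref{prop:coercivity}, the range of $\bar F$ then contains a nonzero vector $(\xi',0)$ lying in $\ker M$. This forces $B\xi'=C\xi'=0$, a contradiction. Thus $\theta_0$ exists and depends only on $n,k,K_0,\kappa$.

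\textbf{Main obstacle.} The delicate point is Step 3. The constant $\theta_0$ must be uniform, yet $\bar F$ is only a subsequential limit. One has to check that the class of normalized limits is closed under this second compactness, and rely on the kernel argument rather than on $F(M)$ itself, which may vanish. Steps 1 and 2 are a standard viscosity-type passage.
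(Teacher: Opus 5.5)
Your Steps~1 and~2 follow the paper's proof: you make the contact strict, take interior maxima of $W^l-\phi$, divide \eqref{eq:square-equation} by the trace, and pass to the limit. There is one cosmetic difference. The paper applies \eqref{eq:normalized-coercivity} after the limit, where $|p(X)|\leq K_0$ holds exactly. Applying \eqref{eq:coercivity} at level $l$, as you do, only gives $|p^l(X^l)|\leq K_0+o(1)$, so it costs a harmless adjustment of $\gamma$.

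Your Step~3 is correct but takes a different route. The paper argues directly and quantitatively. Taking $\delta\leq\gamma\kappa^2/4$ gives $\norm{C-B}\leq\kappa/2$, hence $C\geq\frac\kappa2 I$. Since $\sigma_k(M)=0$ and $\sigma_{k-1}(C)>0$, \eqref{eq:block} shows that $M$ is the model Hessian $\bar M_d$, built with $C$ in place of $B$. Lemma~\ref{lem:quadratics}, with constants $\kappa/2$ and $L=K_0$, then yields the explicit bounds \eqref{eq:quadratic-ellipticity} for $(F^{ij}(M))$ itself. In particular $\sum_rF^{rr}(M)>0$, continuity forces $\bar F=F(M)/\sum_rF^{rr}(M)$, and $\theta_0$ is explicit.

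You instead argue by contradiction, close the class of normalized limits under a diagonal extraction, and rerun the kernel argument of Proposition~\ref{prop:coercivity} through Lemma~\ref{lem:newton-limits}\textup{(ii)}. This avoids the Newton-inequality computations of Lemma~\ref{lem:quadratics}. The cost is a non-explicit $\theta_0$, and it hides the fact that no degenerate normalization occurs in the small regime. Your concern that $F(M)$ ``may vanish'' does not arise there. Since $C>0$, interlacing gives $\rank M\geq n-1$, and by Lemma~\ref{lem:newton-limits}\textup{(i)} this excludes both $\sigma_{k-1}(M)=0$ and a nonzero degenerate $(F^{ij}(M))$.
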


\begin{proof}
We use the standard stability argument for upper contacts under uniform
convergence \cite[Lemma~6.1, p.~33]{CIL}, keeping track of the Hessians
and normalized Newton tensors of the approximating solutions.
After replacing $\phi(Z)$ by $\phi(Z)+|Z-X|^4$, the contact is strict.
On a small closed ball in $\R_+^n$, the smooth approximations satisfy
\[
p^l=\frac{D_yU^l-By}{t}\longrightarrow p,
\qquad W^l:=|p^l-\beta|^2\longrightarrow W
\quad\text{uniformly}.
\]
Choose interior maxima $X^l=(y^l,t^l)$ of $W^l-\phi$. Then
\[
X^l\to X,\qquad DW^l(X^l)=D\phi(X^l),\qquad
D^2W^l(X^l)\leq D^2\phi(X^l).
\]
By \eqref{eq:approximation},
\[
\frac{2|p^l(X^l)-\beta|}{t^l}
\frac{|Df^l(X^l)|}{\sum_{i=1}^nF^{ii}(D^2U^l(X^l))}
\longrightarrow0.
\]
After passing to a subsequence,
\[
\begin{gathered}
D^2U^l(X^l)\to M\in\ov\Gamma_k,\qquad
\norm M\leq K_0,\qquad \sigma_k(M)=0,\\
\frac{F^{ij}(D^2U^l(X^l))}{\sum_{r=1}^nF^{rr}(D^2U^l(X^l))}
\to\bar F^{ij},\qquad
V^l\to V=[\,C-B,\ d-p(X)\,].
\end{gathered}
\]
Divide \eqref{eq:square-equation} by the trace and pass to the limit.
By Proposition~\ref{prop:coercivity},
\[
\cL_{\bar F}\phi(X)
\geq\frac2{t^2}\sum_{a=1}^{n-1}\sum_{i,j=1}^n
\bar F^{ij}V_{ai}V_{aj}
\geq\frac{2\gamma}{t^2}\norm V^2.
\]
This proves \eqref{eq:contact-data}.

Since $(\bar F^{ij})\geq0$ and $\sum_i\bar F^{ii}=1$,
\begin{equation}\label{eq:V-small}
2\gamma\norm V^2
\leq t^2\norm{D^2\phi(X)}+2t|D\phi(X)|.
\end{equation}
Choose $\delta\leq\gamma\kappa^2/4$. Under
\eqref{eq:contact-smallness},
\[
\norm V^2\leq\frac\delta\gamma\leq\frac{\kappa^2}{4},\qquad
C\geq B-\norm{C-B}I\geq\frac\kappa2I.
\]
Lemma~\ref{lem:quadratics}, applied to $C$, and continuity imply
\[
\bar F^{ij}=\frac{F^{ij}(M)}{\sum_{r=1}^nF^{rr}(M)},\qquad
\theta_0I\leq(\bar F^{ij})\leq I,
\]
where $\theta_0>0$ depends only on the common bounds.
This proves \eqref{eq:contact-ellipticity}.
\end{proof}

We now prove the interior maximum principle using a barrier that
satisfies \eqref{eq:contact-smallness}.

\begin{lemma}\label{lem:interior-maximum}
Let $U\in\cA(B)$ and $\beta\in\R^{n-1}$.
If $|p-\beta|^2$ attains its global maximum at a point of $\R_+^n$,
then $p$ is constant.
\end{lemma}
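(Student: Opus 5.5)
We argue by a strong maximum principle for the viscosity inequality supplied by Lemma~\ref{lem:contact}. Let $W=|p-\beta|^2$ and $m=\max_{\R_+^n}W$, and let $\Sigma=\{X\in\R_+^n:W(X)=m\}$. By hypothesis $\Sigma\neq\emptyset$, and $\Sigma$ is relatively closed because $W$ is continuous. We will show that $\Sigma$ is open, so that $\Sigma=\R_+^n$ and $W\equiv m$. Constancy of $W$ then feeds back into \eqref{eq:contact-data}. Indeed, $\phi\equiv m$ touches $W$ from above at every point, and \eqref{eq:contact-data} then gives $0\geq 2\gamma t^{-2}\norm V^2$. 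Hence $V=0$ for the limiting Hessian at every point.

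To upgrade this to constancy of $p$, we pass through the approximating smooth functions. By Proposition~\ref{prop:coercivity} and \eqref{eq:Dp}, $V=t\,Dp$ at points of differentiability in the limit. Alternatively, we apply the same argument to $|p-\beta'|^2$ for $\beta'$ near $\beta$, after noticing that $W\equiv m$ forces $p$ to lie on a sphere. Our preferred route is to use $V=0$ directly: $V=0$ means that $C=B$ and $d=p$ at a.e.\ point along limits of Hessians, so $D_y\partial_y U=B$ and $\partial_t D_yU=p$. Since also $D_yU=By+tp$, we get $p+t\partial_tp=p$ and $t D_yp=0$, so $p$ is locally constant, hence constant.

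The main step is openness of $\Sigma$, via a Hopf-type barrier. Suppose $\Sigma\neq\R_+^n$. Choose a ball $B_\rho(Z)\Subset\R_+^n$ with $B_\rho(Z)\cap\Sigma=\emptyset$ whose boundary touches $\Sigma$ at a point $X_0=(y_0,t_0)$; this ball can be taken small relative to $t_0$. Set
\[
h(X)=e^{-A|X-Z|^2}-e^{-A\rho^2},\qquad \phi=m-\varepsilon h .
\]
On the annulus $\rho/2\leq|X-Z|\leq\rho$, for small $\varepsilon$, the function $\phi$ lies above $W$ on the inner sphere (where $W<m$ strictly), and $\phi\geq m\geq W$ outside $B_\rho(Z)$. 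Shifting $\phi$ down until first contact produces a contact point in the annulus where $\phi$ touches $W$ from above, while $\phi-m$ is negative inside $B_\rho(Z)$. Since $\varepsilon$ is small and $t$ is comparable to $t_0$ on the annulus, the smallness condition \eqref{eq:contact-smallness} holds for the choices of $A$ and $\rho$ made below. By \eqref{eq:contact-ellipticity}, the normalized tensor then satisfies $\theta_0 I\leq\bar F\leq I$. The standard computation then gives
\[
\cL_{\bar F}\phi=-\varepsilon e^{-A|X-Z|^2}\Bigl(4A^2\bar F(X-Z)\cdot(X-Z)-2A\operatorname{tr}\bar F-\tfrac{4A}{t}(\bar F(X-Z))^n\Bigr)<0
\]
once $A\gtrsim\theta_0^{-1}\rho^{-2}(1+\rho/t)$. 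This contradicts $\cL_{\bar F}\phi\geq0$ from \eqref{eq:contact-data}.

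The delicate point, which we expect to be the main obstacle, is the order of constants. Uniform ellipticity is available only after \eqref{eq:contact-smallness} is verified, and that condition involves $t^2\varepsilon A^2\rho^2$. We therefore fix $\rho\leq t_0/4$ and $A$ first, depending only on $\theta_0$, $\rho$ and $t_0$, and then take $\varepsilon$ small enough. Because $\delta$ and $\theta_0$ are independent of $\varepsilon$, this ordering closes the argument.
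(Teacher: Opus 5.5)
Your proof that $\Sigma$ is open matches the paper's. You use the same exponential barrier on an annulus whose outer sphere touches the maximum set. You fix $A$ first and then take $\varepsilon$ small, so that \eqref{eq:contact-smallness} holds. Lemma~\ref{lem:contact} then gives $\theta_0I\le\bar F\le I$ and $\cL_{\bar F}\phi\ge0$, which contradicts $\cL_L\phi<0$. That part is fine. The gap is in the final step, from $W\equiv m$ to $p\equiv\text{const}$.

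Applying Lemma~\ref{lem:contact} with $\phi\equiv m$ does give $V=0$, but only for the matrix $M$ that lemma produces. This $M$ is a subsequential limit of $D^2U^l(X^l)$ along the maximum points $X^l\to X$ of $W^l-\phi-|Z-X|^4$. Nothing identifies it with $D^2U(X)$. The approximations converge only in $C^1$, so $D^2U^l\to D^2U$ only weak-$*$, and a limit of Hessians along a particular sequence need not be the a.e.\ Hessian.

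The identity $t\,Dp=[\,C-B,\ d-p\,]$ holds a.e.\ only when $C,d$ are blocks of the actual Hessian $D^2U(X)$, or, for each $l$, of $D^2U^l$. So $V(M)=0$ says nothing about $Dp$. Likewise, $t^lDp^l(X^l)\to0$ along one sequence per point does not give $Dp^l\to0$ in any sense that passes to the limit. Hence your conclusion $D_y\partial_yU=B$, $\partial_tD_yU=p$ is unjustified.

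Your "alternative'' points the right way but, as stated, does not work. For a generic $\beta'$ near $\beta$, $|p-\beta'|^2$ need not attain its supremum, so the strong maximum principle cannot be invoked. The missing idea is to choose $\beta'$ so that the maximum is attained at a known interior point.

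Write $m=r^2$, fix $X_0\in\R_+^n$, set $b=p(X_0)$, and take $\beta'=2\beta-b$. Since $|p-\beta|\equiv r$, the parallelogram law gives $|p-\beta'|^2=4r^2-|p-b|^2\le4r^2$, with equality at $X_0$. The openness argument with center $\beta'$ then gives $|p-\beta'|^2\equiv4r^2$, i.e.\ $p\equiv b$. Any $\beta'=\beta-s(b-\beta)$ with $s>0$ also works, by equality in Cauchy--Schwarz. This is the paper's route. It uses only the strong maximum principle you already proved and never needs to identify limiting Hessians with $D^2U$.
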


\begin{proof}
Set
\[
W=|p-\beta|^2,\qquad m=\sup_{\R_+^n}W,\qquad S=\{W=m\}.
\]
If $S$ is not open, choose a relative boundary point
$X_0=(y_0,t_0)\in S$ and $z\in B_{t_0/4}(X_0)\setminus S$.
With $R=\dist(z,S)$,
\[
\begin{gathered}
0<R\leq|z-X_0|<t_0/4,\qquad
z^n-2R\geq t_0-3|z-X_0|>t_0/4,\\
B_{2R}(z)\subset\R_+^n,\qquad
\max_{\partial B_{R/2}(z)}W<m,\qquad
S\cap\partial B_R(z)\ne\varnothing.
\end{gathered}
\]
Consider
\begin{equation}\label{eq:radial-barrier}
\phi(X)=m-\varepsilon
\bigl(e^{-a|X-z|^2}-e^{-aR^2}\bigr).
\end{equation}
Then
\[
\begin{aligned}
\partial_i\phi
&=2\varepsilon a e^{-a|X-z|^2}(X^i-z^i),\\
\partial_{ij}\phi
&=2\varepsilon a e^{-a|X-z|^2}
\bigl(\delta_{ij}-2a(X^i-z^i)(X^j-z^j)\bigr),
\end{aligned}
\]
and
\begin{equation}\label{eq:barrier-calculation}
\begin{aligned}
\frac{\cL_L\phi}{2\varepsilon a e^{-a|X-z|^2}}
&=\sum_{i=1}^nL^{ii}
+\frac2t\sum_{i=1}^nL^{in}(X^i-z^i)\\
&\quad-2a\sum_{i,j=1}^nL^{ij}(X^i-z^i)(X^j-z^j).
\end{aligned}
\end{equation}
For $\theta_0I\leq L\leq I$ and $R/2\leq|X-z|\leq R$,
choose $a$ sufficiently large so that
\[
\frac{\cL_L\phi}{2\varepsilon a e^{-a|X-z|^2}}
\leq n+\frac{2R}{z^n-R}-\frac{a\theta_0R^2}{2}<0
\]
Fix this $a$ and choose
$\varepsilon>0$ so that
\[
\begin{gathered}
\varepsilon\bigl(e^{-aR^2/4}-e^{-aR^2}\bigr)
\leq m-\max_{\partial B_{R/2}(z)}W,\\
\sup_{\ov B_R(z)\setminus B_{R/2}(z)}
\bigl(t^2\norm{D^2\phi}+t|D\phi|\bigr)\leq\delta.
\end{gathered}
\]
Thus $W\leq\phi$ on both boundary spheres. At a positive interior
maximum of $W-\phi$, Lemma~\ref{lem:contact} would imply
\[
\theta_0I\leq(\bar F^{ij})\leq I,\qquad
0\leq\cL_{\bar F}\phi<0.
\]
a contradiction.
Consequently,
\[
W\leq\phi\quad\text{in }\ov B_R(z)\setminus B_{R/2}(z).
\]
Choose $X_*\in S\cap\partial B_R(z)$. Since $\phi\geq m\geq W$
outside $B_R(z)$,
\[
W(X_*)=\phi(X_*)=m,\qquad
W\leq\phi\quad\text{in }B_{R/4}(X_*).
\]
Lemma~\ref{lem:contact} again yields
$0\leq\cL_{\bar F}\phi(X_*)<0$.
Hence $S$ is open as well as relatively closed and nonempty, so
$W\equiv m$.

Write $m=r^2$, fix $b=p(X_0)$ at an interior point, and set
$\beta'=2\beta-b$. 
The values of $p$ lie on the sphere of radius $r$ centered at $\beta$.
The segment joining $\beta'$ and $b=p(X_0)$ is a diameter of this sphere. Hence
\begin{equation}\label{eq:reflected-center}
|p-\beta'|^2=4r^2-|p-b|^2\leq4r^2,
\end{equation}
with equality at $X_0$. Applying the same strong maximum principle
with $\beta'$ in place of $\beta$, we obtain
\[
|p-\beta'|^2\equiv4r^2.
\]
By \eqref{eq:reflected-center}, $|p-b|^2\equiv0$,
and hence $p\equiv b$. 
\end{proof}

\medskip
\noindent\emph{Maxima on the flat boundary.}
Since no boundary trace of the quotient is assumed, we use an upper
envelope on the closed half-space. We formulate the argument for
sequences so that it also applies to the dilations used below. Let $U^l\in\cA(B^l)$ have common bounds, with $B^l\to B$, and set
$p^l=(D_yU^l-B^ly)/t$. For a fixed $\beta\in\R^{n-1}$,
define on the closed half-space
\begin{equation}\label{eq:envelope}
\mathcal W(X)=\limsup_{\substack{l\to\infty,\ Z\to X\\Z\in\R_+^n}}
|p^l(Z)-\beta|^2.
\end{equation}
The function $\mathcal W$ is bounded and upper semicontinuous.
For compact $K\subset\ov{\R_+^n}$ and $\psi\in C(K)$, choose points
in $K\cap\R_+^n$ approaching the suprema of $|p^l-\beta|^2-\psi$.
Compactness yields a subsequence $X^l\to X\in K$, and the definition
of the envelope implies
\[
\limsup_{l\to\infty}
\bigl(|p^l(X^l)-\beta|^2-\psi(X^l)\bigr)
\leq\mathcal W(X)-\psi(X).
\]
Consequently,
\begin{equation}\label{eq:compact-envelope}
\limsup_{l\to\infty}\sup_{K\cap\R_+^n}
\bigl(|p^l-\beta|^2-\psi\bigr)
\leq\sup_K(\mathcal W-\psi)
\end{equation}

The following boundary inequality and the interior maximum principle
together handle every maximum attained at a finite point.

\begin{lemma}\label{lem:boundary-test}\label{lem:finite-maximum}
Let $U^l\in\cA(B^l)$ have common bounds, with $B^l\to B$.
Fix $\beta\in\R^{n-1}$ and define $\mathcal W$ by
\eqref{eq:envelope}. There is $C_0>0$, depending only on
$n,k,K_0,\kappa$, such that
every $C^2$ upper test $\phi$ for $\mathcal W$ at
$X_0=(y_0,0)$ with $\mathcal W(X_0)>0$ satisfies
\begin{equation}\label{eq:weak-oblique}
\partial_t\phi(X_0)+C_0|D_y\phi(X_0)|\geq0.
\end{equation}
Moreover, if $U^l\to U\in\cA(B)$ locally in
$C^1(\ov{\R_+^n})$ and $\mathcal W$ attains its global maximum
at a finite point of $\ov{\R_+^n}$, then the quotient $p$ of $U$
is constant.
\end{lemma}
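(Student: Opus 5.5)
The plan is to argue by contradiction for \eqref{eq:weak-oblique}, using the barrier $1/t$ to keep contact points off the flat boundary. Then I would deduce the second assertion from \eqref{eq:weak-oblique} by a Hopf-type argument combined with Lemma~\ref{lem:interior-maximum}.

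\emph{Step 1 (setting up interior contacts).} Let $\phi$ touch $\mathcal W$ from above at $X_0=(y_0,0)$ with $\mathcal W(X_0)>0$. Suppose that
\[
\partial_t\phi(X_0)+C_0|D_y\phi(X_0)|<0,
\]
where $C_0=1/\theta_0$ and $\theta_0$ is the constant of Lemma~\ref{lem:contact}. After replacing $\phi$ by $\phi+|X-X_0|^4$, the contact is strict on a small closed half-ball $K$ around $X_0$. By continuity, the strict oblique inequality persists on $K$. For $\varepsilon>0$ set
\[
\psi_\varepsilon=\phi+\frac{\varepsilon}{t}.
\]
Since $W^l=|p^l-\beta|^2\leq C$ and $\psi_\varepsilon\to+\infty$ as $t\to0$, the function $W^l-\psi_\varepsilon$ attains its maximum over $K\cap\R_+^n$ at an interior point $X^l=(y^l,t^l)$ with $t^l>0$. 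By \eqref{eq:compact-envelope} and the strictness of the contact, we can choose $\varepsilon=\varepsilon_l\downarrow0$ so that $X^l\to X_0$; here $\mathcal W(X_0)>0$ keeps the maxima from being trivial. In particular $t^l\to0$.

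\emph{Step 2 (uniform ellipticity at the contact points).} At $X^l$ I would repeat the stability argument of Lemma~\ref{lem:contact} for $U^l$ and $\psi_\varepsilon$. The key algebraic fact is
\[
\cL_L(1/t)=L^{nn}\frac{2}{t^3}-\frac{2}{t}L^{nn}\frac{1}{t^2}=0
\]
for every $L$, so $\cL_{\bar F}\psi_\varepsilon=\cL_{\bar F}\phi$. I would not apply \eqref{eq:contact-smallness} to $\psi_\varepsilon$ directly, because the terms $\varepsilon/t$ need not be small. Instead I would go back one step to the inequality behind \eqref{eq:V-small}:
\[
2\gamma\norm V^2\leq t^2\cL_{\bar F}\psi_\varepsilon=t^2\cL_{\bar F}\phi\leq (t^l)^2\norm{D^2\phi}+2t^l|D\phi|\longrightarrow0.
\]
The argument following \eqref{eq:V-small} then gives $C\geq\frac\kappa2I$ and $\theta_0I\leq(\bar F^{ij})\leq I$ at these contacts. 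This use of the cancellation is the main obstacle: one has to check that the passage to the limit in Lemma~\ref{lem:contact} tolerates test functions depending on $l$ and contact points approaching the boundary, and I expect it does because all bounds are uniform.

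\emph{Step 3 (contradiction).} With ellipticity in hand,
\[
0\leq\cL_{\bar F}\phi\leq n\norm{D^2\phi}+\frac2t\Bigl(\bar F^{nn}\partial_t\phi+\sum_{a<n}|\bar F^{an}||\partial_a\phi|\Bigr)
\leq C+\frac{2\theta_0}{t}\bigl(\partial_t\phi+\theta_0^{-1}|D_y\phi|\bigr).
\]
The second inequality uses $\bar F\leq I$. The third uses $\partial_t\phi<0$, which follows from the assumed oblique inequality, together with $\bar F^{nn}\geq\theta_0$ and $|\bar F^{an}|\leq1$. The last term is $\leq-c/t^l\to-\infty$, which is a contradiction. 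This proves \eqref{eq:weak-oblique}.

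\emph{Step 4 (finite maxima).} Let $m=\max\mathcal W$ be attained at a finite point. If $m=0$, then $p\equiv\beta$. Assume $m>0$. Under $C^1_{\mathrm{loc}}$ convergence we have $p^l\to p$ locally uniformly in $\R_+^n$, so $\mathcal W=|p-\beta|^2$ there. If $\mathcal W=m$ at some interior point, Lemma~\ref{lem:interior-maximum} gives that $p$ is constant. Otherwise $\mathcal W<m$ in $\R_+^n$ and the maximum is attained only at some $X_0=(y_0,0)$. In that case I would run a Hopf argument with the radial barrier \eqref{eq:radial-barrier}, taking $z=(y_0,R)$ and working on the annulus $R/2\leq|X-z|\leq R$, which touches $\partial\R_+^n$ only at $X_0$. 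The comparison $\mathcal W\leq\phi$ on the annulus would be proved exactly as in Lemma~\ref{lem:interior-maximum}, using interior contacts and Lemma~\ref{lem:contact}; upper semicontinuity gives $\max_{\partial B_{R/2}(z)}\mathcal W<m$. Then $\phi$ is an upper test for $\mathcal W$ at $X_0$, and
\[
D\phi(X_0)=-2\varepsilon aRe^{-aR^2}e_n,
\]
so $\partial_t\phi(X_0)<0$ and $D_y\phi(X_0)=0$. This contradicts \eqref{eq:weak-oblique}, so $p$ is constant.
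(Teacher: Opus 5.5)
The gap is in Step 4. You cannot prove the comparison $\mathcal W\le\phi$ on the annulus ``exactly as in Lemma~\ref{lem:interior-maximum}''. There the drift term in \eqref{eq:barrier-calculation} was bounded by $\frac2t\sum_iL^{in}(X^i-z^i)\le 2R/(z^n-R)$, which used that the annulus stays at height $t\ge z^n-R>0$. With $z=(y_0,R)$ this bound is infinite. The coefficient $2/t$ blows up at $X_0$, and contact points in the annulus can have arbitrarily small height. So you have not shown $\cL_L\phi<0$ near $X_0$, which is exactly where the Hopf argument takes place.

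The sign of the singular drift has to be used. For the opposite drift $-\frac2t\partial_t$ with $L=I$, the nonconstant function $-t^3$ solves $\Delta w-\frac2t\partial_tw=0$ and attains its maximum on $\{t=0\}$ with zero normal derivative. The missing estimate is the following. On $\ov B_R(z)\cap\R_+^n$ one has $|y-y_0|^2\le 2Rt$, and $\frac2tL^{nn}(t-R)=2L^{nn}-2L^{nn}R/t$. Hence, for $\theta_0I\le L\le I$,
\[
\frac2t\sum_{i=1}^nL^{in}(X^i-z^i)\le2-\frac{2\theta_0R}{t}+2\sqrt{\frac{2R}{t}}\le-\frac{\theta_0R}{t}+2+\frac2{\theta_0}.
\]
Inserting this into \eqref{eq:barrier-calculation} gives $\cL_L\phi<0$ on the whole annulus once $a\theta_0R^2/2>n+2+2/\theta_0$. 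With $a$ chosen this way, and then $\varepsilon$ small, the rest of your Step 4 goes through as written.

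Steps 1--3 are correct and follow the paper: the pole $\varepsilon/t$, the cancellation $\cL_L(1/t)=0$, and coercivity forcing $\norm{V}\to0$ and hence $\theta_0I\le(\bar F^{ij})\le I$. The difference is at the end. The paper lets the contact Hessians converge to $\bar M_b$ and reads the exact oblique direction off \eqref{eq:normal-column}. You use only the ellipticity bounds, which gives a cruder but admissible $C_0$.

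Two small points. First, $|\bar F^{an}|\le1$ alone costs a factor $\sqrt{n-1}$; either use $\sum_a(\bar F^{an})^2\le\bar F^{nn}\le1$ with Cauchy--Schwarz, or enlarge $C_0$. Second, what makes the maxima converge to $X_0$ in Step 1 is not the positivity of $\mathcal W(X_0)$. It is choosing, along a subsequence, points $Z^l\to X_0$ with $W^l(Z^l)\to\mathcal W(X_0)$ and $\varepsilon_l\le t(Z^l)/l$.
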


\begin{proof}
Write $W^l=|p^l-\beta|^2$. At a boundary upper contact $X_0$,
choose $r>0$ and set
\[
\psi=\phi+|X-X_0|^4,\qquad
\mathcal W-\psi\leq-|X-X_0|^4
\quad\text{on }\ov B_r(X_0)\cap\ov{\R_+^n}.
\]
After passing to a subsequence and relabeling, choose
\[
X^l=(y^l,t^l)\to X_0,\qquad t^l>0,\qquad
W^l(X^l)\to\mathcal W(X_0),\qquad \varepsilon^l=t^l/l.
\]
Maximize
\begin{equation}\label{eq:pole-test}
W^l(X)-\psi(X)-\frac{\varepsilon^l}{t}
\end{equation}
on $\ov B_r(X_0)\cap\R_+^n$. Its value at $X^l$ tends to zero,
whereas, by \eqref{eq:compact-envelope},
\[
\sup_{\partial B_r(X_0)\cap\R_+^n}(W^l-\psi)\leq-r^4/2
\quad\text{for large }l.
\]
For fixed $l$, the expression tends to $-\infty$ at $t=0$.
Hence the maximum occurs at an interior point $Y^l$.
Writing $s^l=(Y^l)^n$, we have
\[
W^l(X^l)-\psi(X^l)-\frac1l
\leq W^l(Y^l)-\psi(Y^l)-\frac{\varepsilon^l}{s^l}
\leq W^l(Y^l)-\psi(Y^l).
\]
The strict contact and \eqref{eq:compact-envelope} therefore imply
\begin{equation}\label{eq:pole-limits}
Y^l\to X_0,\qquad W^l(Y^l)\to\mathcal W(X_0),\qquad
\frac{\varepsilon^l}{s^l}\to0.
\end{equation}

For each fixed $l$, apply Lemma~\ref{lem:contact} at $Y^l$ with
test $\psi+\varepsilon^l/t$, up to an additive constant.
Here we first pass to the limit in the smooth approximations, and only
then let $l\to\infty$.
Since
\begin{equation}\label{eq:pole-sign}
\cL_{\bar F}(\varepsilon/t)
=\frac{2\varepsilon}{t^3}\bar F^{nn}
-\frac{2\varepsilon}{t^3}\bar F^{nn}=0,
\end{equation}
the matrices at the contact points satisfy
\begin{equation}\label{eq:boundary-contact}
\sum_{i,j=1}^n\bar F^{ij,l}\partial_{ij}\psi(Y^l)
+\frac2{s^l}\sum_{i=1}^n\bar F^{in,l}\partial_i\psi(Y^l)
\geq\frac{2\gamma}{(s^l)^2}\norm{V^l}^2.
\end{equation}
Thus
\[
2\gamma\norm{V^l}^2
\leq(s^l)^2\norm{D^2\psi(Y^l)}+2s^l|D\psi(Y^l)|\longrightarrow0.
\]
After passing to a subsequence, $p^l(Y^l)\to b$, $|b|\leq K_0$. Since
$V^l=[\,C^l-B^l,\ d^l-p^l(Y^l)\,]$, it follows from \eqref{eq:block} that
\[
\begin{gathered}
C^l\to B,\qquad d^l\to b,\\
c^l=\frac{\sum_{i,j=1}^{n-1}[T_{k-2}(C^l)]^{ij}
(d^l)^i(d^l)^j-\sigma_k(C^l)}{\sigma_{k-1}(C^l)}
\longrightarrow c_b,\qquad M^l\to\bar M_b.
\end{gathered}
\]
Lemma~\ref{lem:quadratics} and continuity imply, for large $l$,
\[
\bar F^{ij,l}=\frac{F^{ij}(M^l)}{\sum_rF^{rr}(M^l)}
\longrightarrow\frac{F^{ij}(\bar M_b)}{\sum_rF^{rr}(\bar M_b)}.
\]
Multiplying \eqref{eq:boundary-contact} by $s^l/2$ and passing to the limit, we obtain
\[
\frac{\sum_iF^{in}(\bar M_b)\partial_i\phi(X_0)}
{\sum_rF^{rr}(\bar M_b)}\geq0.
\]
By \eqref{eq:normal-column},
\begin{equation}\label{eq:oblique}
\sigma_{k-1}(B)\partial_t\phi(X_0)
-\sum_{a,i=1}^{n-1}[T_{k-2}(B)]^{ai}b^i\partial_a\phi(X_0)\geq0.
\end{equation}
Moreover,
\[
\frac{\left(\sum_{a=1}^{n-1}
\left|\sum_{i=1}^{n-1}[T_{k-2}(B)]^{ai}b^i\right|^2\right)^{1/2}}
{\sigma_{k-1}(B)}
\leq\frac{k-1}{n-1}\left(\frac{K_0}{\kappa}\right)^{k-1}=:C_0.
\]
Dividing \eqref{eq:oblique} by $\sigma_{k-1}(B)$ proves
\eqref{eq:weak-oblique}.

For the last assertion, set $W=|p-\beta|^2$ and $m=\max\mathcal W$.
Local $C^1$ convergence implies $\mathcal W=W$ for $t>0$.
If $m=0$, then $p=\beta$; if $m$ is attained in $\R_+^n$,
Lemma~\ref{lem:interior-maximum} applies. Otherwise,
\[
\mathcal W(X_0)=m>0,\qquad X_0=(y_0,0),\qquad
W<m\quad\text{in }\R_+^n.
\]
Fix $R>0$, set $z=(y_0,R)$, and consider
\[
\phi(X)=m-\varepsilon
\left(e^{-a|X-z|^2}-e^{-aR^2}\right),
\]
where $a$ is chosen sufficiently large and then
$\varepsilon>0$ sufficiently small.
For $X\in\ov B_R(z)\cap\R_+^n$,
\[
|y-y_0|^2+(t-R)^2\leq R^2,
\qquad |y-y_0|^2\leq2Rt.
\]
Hence, for $\theta_0I\leq L\leq I$,
\begin{equation}\label{eq:drift-estimate}
\begin{aligned}
\frac2t\sum_iL^{in}(X^i-z^i)
&\leq2-\frac{2\theta_0R}{t}+2\sqrt{\frac{2R}{t}}\\
&\leq-\frac{\theta_0R}{t}+2+\frac2{\theta_0}.
\end{aligned}
\end{equation}
On $R/2\leq|X-z|\leq R$, \eqref{eq:barrier-calculation} implies, for large $a$,
\[
\frac{\cL_L\phi}{2\varepsilon a e^{-a|X-z|^2}}
\leq n+2+\frac2{\theta_0}-\frac{a\theta_0R^2}{2}<0
\]
Fix this $a$ and choose $\varepsilon$ small enough that
\[
\begin{gathered}
W\leq\phi\quad\text{on }\partial B_{R/2}(z),\qquad
\mathcal W\leq m=\phi\quad\text{on }\partial B_R(z),\\
\sup_{\ov B_R(z)\setminus B_{R/2}(z)}
\bigl(t^2\norm{D^2\phi}+t|D\phi|\bigr)\leq\delta.
\end{gathered}
\]
Any positive maximum of $\mathcal W-\phi$ would occur at positive height
and contradict Lemma~\ref{lem:contact}. Thus
\[
\mathcal W\leq\phi\quad\text{on the closed annulus},\qquad
\phi\geq m\geq\mathcal W\quad\text{outside }B_R(z).
\]
The barrier is therefore an upper test throughout a neighborhood
of $X_0$ in the closed half-space, where
\[
D_y\phi(X_0)=0,\qquad
\partial_t\phi(X_0)=-2\varepsilon aR e^{-aR^2}<0.
\]
This contradicts \eqref{eq:weak-oblique}.
\end{proof}

\medskip
\noindent\emph{Maximizing sequences at infinity.}
Dilation brings a maximizing sequence to a bounded set. We first
establish compactness and invariance under dilation. For $U\in\cA(B)$,
\[
|U(X)|\leq\frac{K_0}{2}|X|^2,\qquad
|DU(X)|\leq K_0|X|,\qquad
\norm{D^2U}\leq K_0\quad\text{a.e.}
\]
Thus $U^l\in\cA(B^l)$, $B^l\to B$, have a subsequence with
\[
U^l\longrightarrow U\quad\text{in }C^1_{\mathrm{loc}}(\ov{\R_+^n}).
\]
The boundary data and Hessian bound pass to $U$. A diagonal choice
of smooth approximations establishes \eqref{eq:approximation};
the contact argument in Section~\ref{sec:boundary-dilation} proves
admissibility and the equation with zero right-hand side. Hence $U\in\cA(B)$.

For each fixed $R>0$, the dilation $U(X)\mapsto R^{-2}U(RX)$
preserves the boundary data, normalization, and Hessian bound.
For the smooth approximations,
\[
\sigma_k\bigl(D^2[R^{-2}U^l(RX)]\bigr)=f^l(RX),\qquad
\frac{|D_X[f^l(RX)]|}{\sum_iF^{ii}(D^2U^l(RX))}
=R\frac{|Df^l(RX)|}{\sum_iF^{ii}(D^2U^l(RX))}\longrightarrow0.
\]
Thus every dilation belongs to $\cA(B)$, and compactness applies
to any sequence of scales.

\begin{lemma}\label{lem:maximum-infinity}
Let $U\in\cA(B)$. If there are points $X^l\in\R_+^n$ such that
\[
|X^l|\longrightarrow\infty,\qquad
|p(X^l)|^2\longrightarrow\sup_{\R_+^n}|p|^2,
\]
then $p$ is constant.
\end{lemma}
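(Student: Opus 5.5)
We dilate at the scale of the maximizing points and reduce to Lemma~\ref{lem:finite-maximum}. Set $R^l=|X^l|\to\infty$ and define
\[
U_l(X)=(R^l)^{-2}U(R^lX).
\]
By the dilation invariance established above, $U_l\in\cA(B)$ with the same bounds $\kappa,K_0$. Its quotient is
\[
p_l(X)=\frac{D_yU_l(X)-By}{t}=\frac{(R^l)^{-1}D_yU(R^lX)-By}{t}=p(R^lX),
\]
since $D_yU(R^lX)-BR^ly=R^lt\,p(R^lX)$. Thus $\sup|p_l|^2=\sup|p|^2=:m$, and the points $Z^l=X^l/R^l$ lie on the unit sphere in $\R_+^n$ with $|p_l(Z^l)|^2\to m$. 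By the compactness statement, after passing to a subsequence, $U_l\to U_\infty\in\cA(B)$ in $C^1_{\mathrm{loc}}(\ov{\R_+^n})$, and $Z^l\to Z_\infty$ with $|Z_\infty|=1$ in the closed half-space.

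Take $\beta=0$ and let $\mathcal W$ be the envelope \eqref{eq:envelope} built from the sequence $p_l$ (with $B^l=B$). Each $|p_l|^2\le m$, so $\mathcal W\le m$. By the definition of the limsup along $Z^l\to Z_\infty$, we get $\mathcal W(Z_\infty)\ge m$. Hence $\mathcal W$ attains its global maximum $m$ at the finite point $Z_\infty$; this holds whether $Z_\infty$ is interior or on $\{t=0\}$. Lemma~\ref{lem:finite-maximum} then shows that the quotient $p_\infty$ of $U_\infty$ is constant, say $p_\infty\equiv b$, so $U_\infty=Q_{B,b}$. If $m=0$, then $p\equiv0$ directly.

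It remains to transfer constancy back to $p$, and this is the main obstacle: knowing that the blow-down is quadratic does not immediately force $p$ itself to be constant. We exploit the equality in the maximum. If $Z_\infty$ is interior, local $C^1$ convergence gives $|p_\infty(Z_\infty)|^2=m$, so $|b|^2=m$. In the boundary case, the same equality follows from the value of the envelope. We then repeat the argument with the reflected center $\beta'=2\cdot0-b=-b$, as in the proof of Lemma~\ref{lem:interior-maximum}:
\[
|p-\beta'|^2=|p+b|^2\le 4m,
\]
with near-equality along $X^l$, since $p(X^l)\to b$ in the interior case.

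If $|p+b|^2$ attains $4m$ at a finite point, Lemma~\ref{lem:finite-maximum} (applied to the constant sequence $U$) or Lemma~\ref{lem:interior-maximum} gives constancy. Otherwise we are again in the situation at infinity, but now with the extra information that the maximizing sequence has $p(X^l)\to b$. On the sphere $|p|^2=m$ centered at $0$, this pins down direction and magnitude.

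The cleanest way I would try to close the argument is to prove that $\sup|p-b|^2=0$ by contradiction. Suppose $\sup|p-b|^2=\eta>0$. Pick near-maximizers $\tilde X^l$ of $|p-b|^2$. A maximizing sequence tending to infinity for $|p-b|^2$ yields, by the same dilation argument, a blow-down limit with constant quotient $b'$ satisfying $|b'-b|^2=\eta$ and $|b'|^2\le m$. Combining the two centers $0$ and $-b$ and using the identity \eqref{eq:reflected-center} would then force $b'=b$, a contradiction.

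Some care is required here, because the two blow-downs may come from different scales. If this combination fails, the fallback is to show directly that $|p|^2$ cannot approach its supremum only at infinity. We would do this by constructing, as in the flat-boundary case, an upper barrier for the rescaled envelope near $Z_\infty$ that uses the uniform ellipticity from Lemma~\ref{lem:contact} near $\bar M_b$.
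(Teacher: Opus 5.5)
Your first step is correct and is exactly the paper's: with $R^l=|X^l|$ you have $p_l(X)=p(R^lX)$, the envelope with $\beta=0$ attains $m$ at $Z_\infty$, and Lemma~\ref{lem:finite-maximum} gives $U_\infty=Q_{B,b}$. The gap is the transfer back to $p$. You correctly flag this step, but you do not close it.

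\emph{Why your route does not close.} The reflected-center identity \eqref{eq:reflected-center} holds only once one knows $|p-\beta|^2\equiv m$ on all of $\R_+^n$; that is what the strong maximum principle supplied in Lemma~\ref{lem:interior-maximum}. Here you only know $|p|^2\le m$, so nothing forces $b'=b$. The relations $|b|^2=m$, $|b'|^2\le m$, $|b'-b|^2=\eta>0$ are compatible (for instance $b'=0$), and blow-downs along unrelated sequences of scales need not share their quotient. Your claim that $|b|^2=m$ when $Z_\infty\in\{t=0\}$ also presupposes that the envelope on the flat boundary is determined by interior values. That is, it assumes there is no boundary layer, which is precisely what has to be proved. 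Finally, the barrier fallback near $Z_\infty$ looks in the wrong place. The value $p(Y)$ at a fixed point corresponds, in the rescaled picture, to $p_l(Y/R^l)$, and $Y/R^l\to0$, which is a point of the flat boundary, not $Z_\infty$.

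\emph{The missing idea.} Keep the same blow-down sequence and prove that $p_l\to b$ uniformly on $\ov B_1\cap\R_+^n$, including up to $t=0$.

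Take the envelope \eqref{eq:envelope} of this sequence with center $\beta=b$. Interior $C^1$ convergence gives $\mathcal W=0$ for $t>0$. Suppose $\mathcal W(y_0,0)>0$. Pick $A>\sup\mathcal W$, and let $(y_*,0)$ maximize $\mathcal W(y,0)-A|y-y_0|^2$ over $|y-y_0|\le1$; the maximum $c$ is positive and $|y_*-y_0|<1$. Use $\phi(y,t)=c+A|y-y_0|^2-Tt$ with $T>2C_0A|y_*-y_0|$. This $\phi$ lies above $\mathcal W$ on the flat boundary and touches it at $(y_*,0)$. It is positive at nearby points with $t>0$, hence above $\mathcal W=0$ there. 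But $\partial_t\phi+C_0|D_y\phi|<0$ at $(y_*,0)$, contradicting \eqref{eq:weak-oblique}. Thus $\mathcal W\equiv0$ on $\ov{\R_+^n}$, and \eqref{eq:compact-envelope} gives \eqref{eq:no-layer}.

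For fixed $Y\in\R_+^n$ and large $l$ we have $Y/R^l\in B_1$, so
$|p(Y)-b|=|p_l(Y/R^l)-b|\le\sup_{\ov B_1\cap\R_+^n}|p_l-b|\to0$, and hence $p\equiv b$. This uses neither $|b|^2=m$ nor any reflection. The boundary inequality of Lemma~\ref{lem:boundary-test} is the ingredient that excludes the boundary layer.
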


\begin{proof}
Set
\[
m=\sup_{\R_+^n}|p|^2,\qquad R^l=|X^l|,\qquad
U^l(X)=\frac{U(R^lX)}{(R^l)^2}.
\]
After passing to a subsequence,
\[
\begin{gathered}
U^l\to U_\infty\in\cA(B)
\quad\text{in }C^1_{\mathrm{loc}}(\ov{\R_+^n}),\\
X^l/R^l\to X_*\in\partial B_1\cap\ov{\R_+^n},\qquad
p^l(X)=p(R^lX),\\
|p^l(X^l/R^l)|^2=|p(X^l)|^2\longrightarrow m.
\end{gathered}
\]
For the envelope with $\beta=0$,
\[
\mathcal W\leq m=\mathcal W(X_*).
\]
By Lemma~\ref{lem:finite-maximum}, the quotient of $U_\infty$
is a constant vector $b$.

To show that the original quotient is constant, we need convergence
that remains uniform as the height tends to zero.
Consider the envelope with center $\beta=b$. Interior $C^1$ convergence
implies $\mathcal W=0$ for $t>0$. If $\mathcal W(y_0,0)>0$, choose
$A>\sup\mathcal W$ and let
\[
c:=\max_{|y-y_0|\leq1}
\bigl(\mathcal W(y,0)-A|y-y_0|^2\bigr)
=\mathcal W(y_*,0)-A|y_*-y_0|^2>0.
\]
Then $|y_*-y_0|<1$. For $T>2C_0A|y_*-y_0|$, define
\[
\phi(y,t)=c+A|y-y_0|^2-Tt.
\]
On the flat boundary, $\phi\geq\mathcal W$ with equality at $(y_*,0)$;
at nearby points of positive height, $\phi>0=\mathcal W$.
Thus \eqref{eq:weak-oblique} implies
\[
0\leq\partial_t\phi(y_*,0)+C_0|D_y\phi(y_*,0)|
=-T+2C_0A|y_*-y_0|<0.
\]
Hence $\mathcal W=0$ on the closed half-space. Applying
\eqref{eq:compact-envelope}, we obtain, for every $R>0$,
\begin{equation}\label{eq:no-layer}
\sup_{\ov B_R\cap\R_+^n}|p^l-b|\longrightarrow0.
\end{equation}

For any fixed $Y\in\R_+^n$ and all large $l$,
\[
Y/R^l\in B_1\cap\R_+^n,\qquad
|p(Y)-b|=|p^l(Y/R^l)-b|
\leq\sup_{\ov B_1\cap\R_+^n}|p^l-b|\longrightarrow0.
\]
Therefore $p\equiv b$.
\end{proof}

\begin{proof}[Proof of Theorem~\ref{thm:limits}\textup{(i)}]
Let $U\in\cA(B)$ and define $p$ by \eqref{eq:quotient}.
Choose $X^l\in\R_+^n$ such that
\[
|p(X^l)|^2\longrightarrow m:=\sup_{\R_+^n}|p|^2.
\]
After passing to a subsequence, either $X^l\to X_0$ in the
closed half-space or $|X^l|\to\infty$.

If $X_0\in\R_+^n$, continuity implies $|p(X_0)|^2=m$, and
Lemma~\ref{lem:interior-maximum} implies that $p$ is constant.
If $X_0=(y_0,0)$, use \eqref{eq:envelope} for the constant sequence
$U^l=U$ with $\beta=0$. Then
$\mathcal W\leq m=\mathcal W(X_0)$, and
Lemma~\ref{lem:finite-maximum} implies the same conclusion.
If $|X^l|\to\infty$, apply Lemma~\ref{lem:maximum-infinity}.

Thus $p\equiv b$ for some $b\in\R^{n-1}$. The argument at the
beginning of this subsection shows that $U=Q_{B,b}$, and
Lemma~\ref{lem:quadratics} implies
$\bigl(F^{ij}(D^2Q_{B,b})\bigr)>0$.
\end{proof}

\subsection{Hessian convergence and uniform ellipticity}\label{sec:savin}

Theorem~\ref{thm:limits}\textup{(i)} identifies the limit as $Q_{B,b}$
and shows that the linearization at its Hessian is positive definite.
Savin's small perturbation theorem
\cite[Theorem~1.3, p.~560]{Savin} is the key tool for passing from
uniform convergence to Hessian convergence. We verify its hypotheses
below to obtain a uniform $C^{2,\beta}$ estimate before passing to
the second derivatives. We first write the equation in a form that is
monotone for all symmetric matrices and uniformly elliptic near the
limiting Hessian.

For $M\in\Sym(n)$, define
\[
\rho_0(M)=\sup\{s:M-sI\in\Gamma_k\}.
\]
The matrix $M-sI$ is positive definite for sufficiently negative $s$.
Monotonicity of $\Gamma_k$ and the condition $\sigma_1>0$ therefore imply
\[
\{s:M-sI\in\Gamma_k\}=(-\infty,\rho_0(M)),\qquad
\rho_0(M)\leq\sigma_1(M)/n.
\]
By continuity, the endpoint matrix lies in the closed cone. If
$\sigma_k$ were positive there, the Newton--Maclaurin inequalities
would place the matrix in $\Gamma_k$, contrary to the definition of
$\rho_0$. Hence
\[
M-\rho_0(M)I\in\ov\Gamma_k,\qquad
\sigma_k(M-\rho_0(M)I)=0.
\]
Moreover, for $s<\rho_0(M)$,
\[
\begin{gathered}
\frac{d}{ds}\sigma_k(M-sI)=-\sum_{i=1}^nF^{ii}(M-sI)<0,\\
\lim_{s\to-\infty}\sigma_k(M-sI)=\infty,\qquad
\lim_{s\uparrow\rho_0(M)}\sigma_k(M-sI)=0.
\end{gathered}
\]
Thus, for every $f>0$, there is a unique $\rho_f(M)$ such that
\begin{equation}\label{eq:rho-definition}
M-\rho_f(M)I\in\Gamma_k,\qquad
\sigma_k(M-\rho_f(M)I)=f.
\end{equation}
For $f\geq0$, matrix monotonicity and translation by multiples of the
identity, together with comparison of $M$ and $N$ after adding
$\norm{M-N}_{\mathrm{op}}I$, imply
\begin{equation}\label{eq:rho-properties}
\begin{gathered}
\rho_f(M+P)\geq\rho_f(M)\qquad(P\geq0),\qquad
\rho_f(M+sI)=\rho_f(M)+s,\\
|\rho_f(M)-\rho_f(N)|\leq\norm{M-N}_{\mathrm{op}}.
\end{gathered}
\end{equation}
Also,
\[
\rho_0(M)=0\quad\Longleftrightarrow\quad
M\in\ov\Gamma_k,\quad \sigma_k(M)=0.
\]
This construction by shifts in the identity direction is a special case
of the canonical operators in \cite[Sections~11.4--11.5]{CHLP}.
For $f>0$, implicit differentiation of \eqref{eq:rho-definition} yields
\begin{equation}\label{eq:rho-derivative}
\begin{aligned}
\frac{\partial\rho_f(M)}{\partial M_{ij}}
&=\frac{F^{ij}(M-\rho_f(M)I)}
{\sum_hF^{hh}(M-\rho_f(M)I)},\\
\frac{\partial\rho_f(M)}{\partial f}
&=-\frac1{\sum_hF^{hh}(M-\rho_f(M)I)}.
\end{aligned}
\end{equation}
If $\bar M$ satisfies \eqref{eq:comparison-M}, the denominator is
positive at $(\bar M,0)$. The implicit function theorem therefore
extends $\rho_f(M)$ smoothly to a neighborhood of this point, including
small negative values of $f$. For $f\geq0$, positivity of the lower
symmetric functions shows that this extension agrees with the definition
above. After shrinking the neighborhood,
\[
cI\leq\left(\frac{\partial\rho_f}{\partial M_{ij}}\right)\leq CI.
\]
At a fixed positive level, the same normalization and matrix derivative
appear in \cite[(2.7)--(2.8), p.~4]{ZZ}. For convex solutions with
positive constant right-hand side, Zhou and Zhu also use Savin's
theorem to obtain Hessian convergence \cite[Lemma~4.5, pp.~13--14]{ZZ}.
Here the right-hand side depends on the point and the solution and
tends to zero. The comparison construction below verifies the
hypotheses of Savin's original theorem in this setting.

\begin{proof}[Proof of Theorem~\ref{thm:limits}\textup{(ii)}]
Fix $0<\alpha<1$. To meet the zero-solution hypothesis in Savin's
theorem, we first construct an exact solution close to $Q$ of the same
equation as $U^l$:
\begin{equation}\label{eq:comparison-equation}
\begin{cases}
\sigma_k(D^2v^l)=\lambda(r^l)^k(a^lt-r^lv^l)^k&\text{in }D,\\
v^l=Q&\text{on }\partial D,
\end{cases}
\end{equation}
with
\begin{equation}\label{eq:comparison-bound}
\norm{v^l-Q}_{C^{4,\alpha}(\ov D)}\leq C(r^l)^k.
\end{equation}
Subtracting $v^l$ then produces a transformed equation solved by
both zero and $U^l-v^l$.

Let $Lh=\sum_{i,j=1}^nF^{ij}(\bar M)\partial_{ij}h$. By
\cite[Theorem~6.14, p.~107; Theorem~6.6, pp.~98--99;
Theorem~6.19, pp.~111--112]{GT}, the zero Dirichlet inverse satisfies
\[
\norm{L^{-1}g}_{C^{4,\alpha}(\ov D)}
\leq C\norm g_{C^{2,\alpha}(\ov D)}
\]
Writing $v^l=Q+h$, with $h=0$ on $\partial D$, reduces
\eqref{eq:comparison-equation} to
\[
h=L^{-1}\left[
\lambda(r^l)^k(a^lt-r^l(Q+h))^k
-\sigma_k(\bar M+D^2h)+Lh\right].
\]
On the ball $\norm h_{C^{4,\alpha}}\leq C_0(r^l)^k$, Taylor's formula
implies
\[
\bigl\|\sigma_k(\bar M+D^2h_1)-\sigma_k(\bar M+D^2h_2)
-L(h_1-h_2)\bigr\|_{C^{2,\alpha}}\leq CC_0(r^l)^k\norm{h_1-h_2}_{C^{4,\alpha}}.
\]
The source term has Lipschitz constant at most $C(r^l)^{k+1}$ on the
same ball, and the map on the right takes $h=0$ to a function of norm at
most $C(r^l)^k$. Choose $C_0$ large enough and then $l$ large enough.
The map then preserves the ball and has Lipschitz constant at most
$1/2$. The contraction theorem proves
\eqref{eq:comparison-equation}--\eqref{eq:comparison-bound}.
Since $t\geq3/4$ in $D$ and $v^l$ is uniformly bounded,
\[
a^lt-r^lv^l\geq\tfrac34a_*-r^l\norm{v^l}_{L^\infty(D)}>0
\]
for large $l$. Also, \eqref{eq:comparison-M} and
\eqref{eq:comparison-bound} imply $\sigma_j(D^2v^l)>0$ for $j<k$.
The equation then implies $\sigma_k(D^2v^l)>0$, so
$D^2v^l\in\Gamma_k$.

Now let $z^l=U^l-v^l$. Uniform convergence to $Q$ implies
$\norm{z^l}_{L^\infty(D)}\to0$. Choose a small $\delta>0$, independent
of $l$, and, for $|s|\leq2\delta$, define
\begin{equation}\label{eq:savin-operator}
\begin{aligned}
f^l(s,X)&=\lambda(r^l)^k\{a^lt-r^l(v^l(X)+s)\}^k,\\
\mathcal F^l(M,s,X)&=\rho_{f^l(s,X)}(D^2v^l(X)+M).
\end{aligned}
\end{equation}
Extend the first line to all $s$ by replacing $s$ with a fixed smooth
bounded function that equals $s$ on $[-2\delta,2\delta]$. For large $l$,
the expression in braces remains positive, and this extension does not
change the equation for $z^l$. By construction,
\begin{equation}\label{eq:savin-zero}
\mathcal F^l(0,0,X)=0,\qquad
\mathcal F^l(D^2z^l,z^l,X)=0.
\end{equation}

We check that the structural constants in Savin's theorem are
independent of $l$. First, \eqref{eq:rho-properties} implies
\[
\mathcal F^l(M+P,s,X)\geq\mathcal F^l(M,s,X)
\qquad(M\in\Sym(n),\ P\geq0).
\]
This verifies Savin's global monotonicity hypothesis H1
\cite[p.~558]{Savin}, including outside the locally elliptic
neighborhood, and ensures that the classical solution $z^l$ is a
viscosity solution. Next,
\eqref{eq:comparison-bound} and \eqref{eq:rho-derivative} imply, for
$\delta$ small enough,
\[
cI\leq\left(\frac{\partial\mathcal F^l}{\partial M_{ij}}\right)
\leq CI
\qquad(\norm M\leq2\delta,\ |s|\leq2\delta,\ X\in D).
\]
For $\norm M,\norm P\leq\delta$ and $|s|\leq\delta$, the segment
from $M$ to $M+P$ stays in this neighborhood. Integrating along it,
we obtain
\[
c\sum_iP_{ii}
\leq\mathcal F^l(M+P,s,X)-\mathcal F^l(M,s,X)
\leq C\sum_iP_{ii}\qquad(P\geq0),
\]
which verifies H2. Finally, $\mathcal F^l$ is independent of the gradient
variable. The smoothness of $\rho_f$ near $(\bar M,0)$ and the common
$C^{4,\alpha}$ bound for $v^l$ imply
\[
\norm{\mathcal F^l}_{
C^2(\{\norm M\leq2\delta,\ |s|\leq2\delta\}\times\ov D)}
\leq C.
\]
Together with the exact zero-solution identity in
\eqref{eq:savin-zero}, this verifies Savin's hypothesis H4
\cite[p.~559]{Savin}, with bounds independent of $l$.

All hypotheses of Savin's theorem now hold with common constants.
After scaling $D$ to $B_1$, its regularity and quantitative approximation
results \cite[Theorem~1.3, p.~560; Proposition~4.1, p.~571]{Savin}
therefore yield, for all sufficiently large $l$,
\[
\norm{z^l}_{C^{2,\beta}(B_{1/8}(e_n))}\leq C
\]
for some $\beta>0$. Both the bound and the smallness threshold are
independent of $l$. For $K\Subset B_{1/8}(e_n)$, centered second
differences in unit directions therefore imply
\[
\norm{D^2z^l}_{L^\infty(K)}
\leq C_K\left(h^\beta+h^{-2}\norm{z^l}_{L^\infty(D)}\right)
\]
for all sufficiently small $h>0$. Letting first $l\to\infty$ and then
$h\downarrow0$, and using \eqref{eq:comparison-bound}, we conclude that
\[
\norm{D^2U^l-\bar M}_{L^\infty(K)}
\leq\norm{D^2z^l}_{L^\infty(K)}
+\norm{D^2v^l-\bar M}_{L^\infty(K)}\longrightarrow0.
\]
\end{proof}

We can now finish the contradiction argument from
Section~\ref{sec:boundary-dilation}. For large $l$, the rescaled domain
contains $\ov D$. Both parts of Theorem~\ref{thm:limits} apply and show
that
\[
U^l\longrightarrow Q_{B,b},\qquad
D^2U^l(e_n)\longrightarrow\bar M_b,\qquad
(F^{ij}(\bar M_b))>0.
\]
By orthogonal invariance, \eqref{eq:rescaled-hessian}, and continuity
of the Newton tensor,
\[
\begin{aligned}
0&=\lim_{l\to\infty}
\lambda_{\min}\bigl((F^{ij}(D^2u(x^l)))\bigr)\\
&=\lim_{l\to\infty}
\lambda_{\min}\bigl((F^{ij}(D^2U^l(e_n)))\bigr)\\
&=\lambda_{\min}\bigl((F^{ij}(\bar M_b))\bigr)>0,
\end{aligned}
\]
a contradiction. This proves \eqref{eq:gap} and completes the uniform
ellipticity part of Theorem~\ref{thm:main}.

\begin{remark}
To apply Fan's nonhomogeneous extension of Savin's small perturbation
theorem \cite[Theorem~1.7, p.~3]{Fan} directly to $U^l-Q$, replace
$v^l$ by $Q$ in \eqref{eq:savin-operator} and subtract
$\mathcal F^l(0,0,X)$ from the operator. The resulting source and
spatial oscillation have $C^\alpha$ norms $O((r^l)^k)$, while
\eqref{eq:rho-properties}--\eqref{eq:rho-derivative} provide the
structural bounds. The theorem therefore proves the same Hessian
convergence without the auxiliary solution $v^l$.
\end{remark}

\section{Completion of the proof of Theorem~\ref{thm:main}}\label{sec:regularity}

We first use \eqref{eq:gap} to write a concave, uniformly elliptic
equation for $u$. Boundary and interior estimates then yield
$C^{2,\alpha}$ regularity, and differentiation yields smoothness.

\subsection{A concave extension}

To combine boundary expansions with interior estimates, we construct
a concave operator that is uniformly elliptic on all symmetric matrices. We adapt the extension
by supporting affine functions used by Yu Wang
\cite[Definition~3.1 and Lemma~3.4, p.~944]{YuWang}.
Wang explicitly attributes this construction to Savin
\cite[Remark~3.3, p.~944]{YuWang}; see also
\cite[Lemma~4.1, pp.~447--448]{TWWY}.
Here we extend jointly in the Hessian and the function value. Let
\[
\mathcal K=\ov{\{(D^2u(x),u(x)):x\in\Omega\}}
\subset\Sym(n)\times[-1,0]
\]
and define
\begin{equation}\label{eq:concave-extension}
\widehat F(M,s)=\inf_{(\bar M,z)\in\mathcal K}
\left\{\sum_{i,j=1}^nF^{ij}(\bar M)M_{ij}
+k\lambda(-z)^{k-1}s\right\}.
\end{equation}
The $C^{1,1}$ bound makes $\mathcal K$ compact, and continuity extends
\eqref{eq:gap} to this set:
\[
\theta I\leq(F^{ij}(\bar M))\leq\Lambda I
\qquad((\bar M,z)\in\mathcal K).
\]
Hence, for $P\geq0$,
\[
\begin{gathered}
\theta\sum_iP_{ii}
\leq\widehat F(M+P,s)-\widehat F(M,s)
\leq\Lambda\sum_iP_{ii},\\
|\widehat F(M,s)-\widehat F(M,s')|\leq k\lambda|s-s'|.
\end{gathered}
\]
As the infimum of linear functions of $(M,s)$, $\widehat F$ is
jointly concave and positively homogeneous.

For $(\bar M,z)\in\mathcal K$, we have
$\sigma_k(\bar M)=\lambda(-z)^k$. Thus \eqref{eq:support}
and continuity imply
\[
\sum_{i,j=1}^nF^{ij}(\bar M)\bar M'_{ij}
+k\lambda(-z)^{k-1}z'\geq0
\qquad((\bar M,z),(\bar M',z')\in\mathcal K).
\]
When the two pairs coincide, Euler's identity shows that
\[
\sum_{i,j=1}^nF^{ij}(\bar M)\bar M_{ij}
+k\lambda(-z)^{k-1}z
=k\sigma_k(\bar M)-k\lambda(-z)^k=0.
\]
At $(D^2u(x),u(x))$, all expressions in the infimum are nonnegative
and one of them is zero. Consequently,
\begin{equation}\label{eq:extension-equation}
\widehat F(D^2u,u)=0\quad\text{in }\Omega.
\end{equation}

The boundary theorem used below allows gradient terms but no dependence
on the function value. We remove this dependence by adding one variable
$s\in(-1,1)$ and defining
\begin{equation}\label{eq:lift}
\begin{aligned}
w(x,s)&=e^su(x),\\
\mathcal G(M,P)&=\widehat F(M_{xx},P^{n+1})
+M_{n+1,n+1}-P^{n+1}.
\end{aligned}
\end{equation}
Here $M\in\Sym(n+1)$, $P\in\R^{n+1}$, and $M_{xx}$ is the
upper-left $n\times n$ block. Equivalently,
\begin{equation}\label{eq:Bellman}
\begin{aligned}
\mathcal G(M,P)
=\inf_{(\bar M,z)\in\mathcal K}
\biggl\{&\sum_{i,j=1}^nF^{ij}(\bar M)M_{ij}+M_{n+1,n+1}\\
&+\bigl(k\lambda(-z)^{k-1}-1\bigr)P^{n+1}\biggr\}.
\end{aligned}
\end{equation}
The coefficient matrices satisfy
\[
\min\{\theta,1\}I_{n+1}
\leq\diag\bigl((F^{ij}(\bar M)),1\bigr)
\leq\max\{\Lambda,1\}I_{n+1}.
\]
Thus $\mathcal G$ is uniformly elliptic, concave, Lipschitz in $P$,
and independent of position. Since
$D_x^2w=e^sD^2u$ and $\partial_sw=\partial_{ss}w=e^su$,
positive homogeneity implies
\[
\mathcal G(D^2w,Dw)=e^s\widehat F(D^2u,u)=0
\quad\text{in }\Omega\times(-1,1),
\qquad
w=0\quad\text{on }\partial\Omega\times(-1,1).
\]

\subsection{Boundary expansions and interior estimates}

We work near $\partial\Omega\times[-1/2,1/2]$, away from $s=\pm1$.
The coefficients in \eqref{eq:Bellman} are independent of position,
and both the source and boundary data are zero. The assumptions of
\cite[Theorem~1.2, p.~1696]{SS} therefore hold.
This theorem provides an exponent $\alpha_b>0$ and a H\"older
continuous matrix field $\bar M(Z)$ on the boundary. Since
$w\in C^{1,1}$, the linear coefficient at each boundary point $Z$
is $Dw(Z)$. The polynomial
\begin{equation}\label{eq:boundary-polynomial}
\begin{aligned}
P(Y)&=w(Z)+Dw(Z)\cdot(Y-Z)\\
&\quad+\frac12\sum_{i,j=1}^{n+1}
\bar M_{ij}(Z)(Y^i-Z^i)(Y^j-Z^j)
\end{aligned}
\end{equation}
satisfies
\begin{equation}\label{eq:boundary-expansion}
\begin{gathered}
|w(Y)-P(Y)|\leq C|Y-Z|^{2+\alpha_b},\\
\norm{\bar M(Z)-\bar M(Z')}\leq C|Z-Z'|^{\alpha_b},\qquad
\mathcal G(\bar M(Z),Dw(Z))=0.
\end{gathered}
\end{equation}
To pass from this expansion of $w$ to an estimate for $D^2w$,
we use interior estimates on balls whose radii are comparable
to the distance to the boundary.

Let $\alpha_i>0$ be the exponent in
\cite[Theorem~3.4, pp.~444--445]{TWWY}, applied with H\"older exponent
$1/2$ for the dependence on position, and choose
\begin{equation}\label{eq:alpha-choice}
0<\alpha<\min\{\alpha_b,\alpha_i,1/2\}.
\end{equation}
For an interior point $X$ near the boundary, let $Z$ be a nearest
boundary point and $d=|X-Z|$. On $B_{1/2}\subset\R^{n+1}$, define
\begin{equation}\label{eq:distance-scaling}
\begin{aligned}
v(Y)&=\frac{w(X+dY)-P(X+dY)}{d^{2+\alpha}},\\
\Phi(M,Y)&=d^{-\alpha}
\mathcal G\bigl(\bar M(Z)+d^\alpha M,Dw(X+dY)\bigr).
\end{aligned}
\end{equation}
The ball $B_{d/2}(X)$ lies inside the cylinder. Since
$|X+dY-Z|\leq3d/2$, \eqref{eq:boundary-expansion} implies
\[
\norm v_{L^\infty(B_{1/2})}\leq Cd^{\alpha_b-\alpha}\leq C,
\qquad \Phi(D^2v,Y)=0.
\]
The operator $\Phi$ has the same ellipticity constants and remains
concave in $M$. The $C^{1,1}$ bound for $w$, the Lipschitz dependence
of $\mathcal G$ on the gradient, and
$\mathcal G(\bar M(Z),Dw(Z))=0$ imply
\begin{equation}\label{eq:scaled-oscillation}
\begin{aligned}
|\Phi(0,Y)|
&\leq Cd^{-\alpha}|Dw(X+dY)-Dw(Z)|\leq Cd^{1-\alpha},\\
|\Phi(M,Y)-\Phi(M,Y')|
&\leq Cd^{-\alpha}|Dw(X+dY)-Dw(X+dY')|\\
&\leq Cd^{1-\alpha}|Y-Y'|.
\end{aligned}
\end{equation}
All constants are independent of $X$ and $d$. If $n+1$ is odd,
we apply the even-dimensional formulation in \cite{TWWY} after
adding one extra variable:
\[
\widetilde v(Y,\tau)=v(Y),\qquad
\widetilde\Phi(M,(Y,\tau))=\Phi(M_{YY},Y)+M_{\tau\tau}.
\]
This preserves the structural bounds. The interior estimate,
followed by restriction to $\tau=0$ when an extra variable is used,
therefore yields, in the original dimension,
\begin{equation}\label{eq:interior-scaled}
\norm{D^2v}_{L^\infty(B_{1/4})}
+[D^2v]_{C^{\alpha_i}(B_{1/4})}\leq C.
\end{equation}

Returning to the original scale, we obtain
\begin{equation}\label{eq:Hessian-boundary}
\norm{D^2w(X)-\bar M(Z)}
=d^\alpha\norm{D^2v(0)}\leq Cd^\alpha.
\end{equation}
It remains to compare the Hessians at two interior points $X,X'$.
Write $r=|X-X'|$. If $r\leq d/8$, both points lie in the ball
where \eqref{eq:interior-scaled} applies, and
\[
\begin{aligned}
\norm{D^2w(X)-D^2w(X')}
&=d^\alpha\norm{D^2v(0)-D^2v((X'-X)/d)}\\
&\leq Cd^{\alpha-\alpha_i}r^{\alpha_i}\leq Cr^\alpha.
\end{aligned}
\]
If $r>d/8$, let $Z'$ be a nearest boundary point to $X'$ and
$d'=|X'-Z'|$. The triangle inequality implies
$d+d'+|Z-Z'|\leq Cr$. Using \eqref{eq:Hessian-boundary} at both
points and the boundary estimate in \eqref{eq:boundary-expansion},
we find
\[
\begin{aligned}
\norm{D^2w(X)-D^2w(X')}
&\leq\norm{D^2w(X)-\bar M(Z)}
+\norm{\bar M(Z)-\bar M(Z')}\\
&\quad+\norm{\bar M(Z')-D^2w(X')}\\
&\leq C\bigl(d^\alpha+|Z-Z'|^{\alpha_b}+(d')^\alpha\bigr)
\leq Cr^\alpha.
\end{aligned}
\]
Thus $D^2w$ extends to the boundary with value $\bar M$ and is
$C^\alpha$ there. A finite boundary covering, together with interior
smoothness and restriction to $s=0$, now shows that
\begin{equation}\label{eq:C2alpha}
u\in C^{2,\alpha}(\ov\Omega).
\end{equation}

\subsection{Higher derivatives}

We first estimate derivatives with a tangential index. The equation
will then determine the remaining pure normal derivative.
Flatten a boundary patch by a smooth change of coordinates
$x=\Psi(y,t)$, and write $v=u\circ\Psi$ and $J=D\Psi$.
The Hessian of $u$ in these coordinates is
\begin{equation}\label{eq:flattened-Hessian}
M[v]=J^{-T}
\left(D^2v-\sum_{h=1}^n(J^{-T}Dv)^hD^2\Psi^h\right)J^{-1}.
\end{equation}
Hence
\[
\sigma_k(M[v])=\lambda(-v)^k,\qquad v(y,0)=0.
\]
By \eqref{eq:gap}, the principal coefficient matrix satisfies
\begin{equation}\label{eq:flattened-linearization}
cI\leq J^{-1}(F^{ij}(M[v]))J^{-T}\leq CI,
\end{equation}
while \eqref{eq:C2alpha} implies that its entries are $C^\alpha$.

For $a<n$, subtracting the equations at $(y+he_a,t)$ and $(y,t)$
and dividing by $h$ produces a linear equation for the tangential
difference quotient. Since $\Psi$ is smooth and $v\in C^{2,\alpha}$,
its coefficients and source have common $C^\alpha$ bounds.
Continuity of $v$, $Dv$, and $D^2v$, together with
\eqref{eq:flattened-linearization}, ensures that, for small $h$,
the intermediate values used in the linearization remain in a
fixed uniformly elliptic neighborhood.
The difference quotient vanishes on $t=0$ and satisfies
\[
\left\|\frac{v(\,\cdot+he_a)-v}{h}\right\|_{L^\infty}
\leq\norm{Dv}_{L^\infty}.
\]
Boundary Schauder estimates
\cite[Theorem~6.6, pp.~98--99; Theorem~6.19, pp.~111--112]{GT}
therefore imply, on a smaller patch,
\[
\left\|\frac{v(\,\cdot+he_a)-v}{h}\right\|_{C^{2,\alpha}}
\leq C.
\]
Passing to the limit as $h\to0$, we conclude that
$\partial_av\in C^{2,\alpha}$ for every $a<n$.
Thus all third derivatives with a tangential index are controlled.

To recover the pure normal derivative, let $\xi=J^{-T}e_n$ and
write $M[v]=M_0+\partial_{tt}v\,\xi\xi^T$. Here $M_0$ depends only
on the other second derivatives and on derivatives of lower order.
In the rank-one identity
\[
\sigma_k(M_0+s\xi\xi^T)
=\sigma_k(M_0)+s\sum_{i,j=1}^nF^{ij}(M_0)\xi^i\xi^j
\]
the coefficient of $s$ is independent of $s$. By
\eqref{eq:flattened-linearization}, its value at the actual Hessian
is positive, without any cone condition on $M_0$. We can therefore
solve for the pure normal second derivative:
\begin{equation}\label{eq:pure-normal}
\begin{gathered}
\partial_{tt}v=
\frac{\lambda(-v)^k-\sigma_k(M_0)}
{\sum_{i,j=1}^nF^{ij}(M_0)\xi^i\xi^j},\\
\sum_{i,j=1}^nF^{ij}(M_0)\xi^i\xi^j
=\sum_{i,j=1}^nF^{ij}(M[v])\xi^i\xi^j\geq c>0.
\end{gathered}
\end{equation}
When the right-hand side is differentiated once in $t$, every
third derivative of $v$ that occurs has a tangential index.
The preceding estimates therefore imply
$\partial_{ttt}v\in C^\alpha$, and hence $v\in C^{3,\alpha}$.

The same argument applies at every order. Suppose that
$v\in C^{m,\alpha}$, with $m\geq2$.
The tangentially differentiated equations have coefficients and
sources in $C^{m-2,\alpha}$. Boundary Schauder estimates imply
\[
\partial_av\in C^{m,\alpha}\qquad(a<n),
\]
so every derivative of order $m+1$ with a tangential index is
$C^\alpha$. When we differentiate \eqref{eq:pure-normal} $m-1$
times in $t$, every derivative of order $m+1$ on the right-hand
side has a tangential index. The induction hypothesis controls all lower
derivatives. Therefore,
\[
\partial_t^{m+1}v
=\partial_t^{m-1}
\left(\frac{\lambda(-v)^k-\sigma_k(M_0)}
{\sum_{i,j=1}^nF^{ij}(M_0)\xi^i\xi^j}\right)\in C^\alpha.
\]
We have proved
$v\in C^{m,\alpha}\Longrightarrow v\in C^{m+1,\alpha}$.
A finite covering of the boundary, together with interior smoothness,
shows that $u\in C^\infty(\ov\Omega)$. This and \eqref{eq:gap}
complete the proof of Theorem~\ref{thm:main}.\qed

The logarithmic transform in \cite{LMQS} also satisfies a
quantitative convexity estimate near the boundary.

\begin{corollary}\label{cor:log-collar}
Under the assumptions of Theorem~\ref{thm:main}, let
$v=-\log(-u)$ and $d(x)=\dist(x,\partial\Omega)$.
There exist $c,\delta>0$ such that
\[
\sum_{i,j=1}^n \partial_{ij}v(x)\xi^i\xi^j
\geq \frac{c}{d(x)}|\xi|^2
\qquad(0<d(x)<\delta,\ \xi\in\R^n).
\]
In particular, $D^2v$ is positive definite near the boundary.
\end{corollary}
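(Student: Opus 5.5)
The plan is a direct computation of the Hessian of $v=-\log(-u)$. Write $w=-u>0$, so that $v=-\log w$ and
\[
\partial_{ij}v=\frac{-\partial_{ij}w}{w}+\frac{\partial_iw\,\partial_jw}{w^2}
=\frac{\partial_{ij}u}{-u}+\frac{\partial_iu\,\partial_ju}{u^2}.
\]
By Theorem~\ref{thm:main}, $u\in C^\infty(\ov\Omega)$. The Hopf bounds \eqref{eq:Hopf-bounds} together with $u=0$ on $\partial\Omega$ give $c_1d(x)\leq -u(x)\leq C_1d(x)$ in a collar. They also give $|Du|\geq a_*/2$ there, with $Du$ close to $-u_\nu\,\nu$ at the nearest boundary point. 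Multiplying the quadratic form by $(-u)$ gives
\[
(-u)\sum_{i,j}\partial_{ij}v\,\xi^i\xi^j
=\sum_{i,j}u_{ij}\xi^i\xi^j+\frac{(Du\cdot\xi)^2}{-u}.
\]
It therefore suffices to show that the right-hand side is at least $c|\xi|^2$ whenever $d(x)<\delta$. Dividing by $-u\leq C_1d$ then yields the claimed bound $c/d(x)$.

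Next decompose $\xi=\xi_T+s\,n$, where $n=Du/|Du|$ and $\xi_T\perp Du$. The gradient term equals $s^2|Du|^2/(-u)\geq s^2 a_*^2/(4C_1d)$, which becomes huge as $d\to0$. The Hessian term equals $D^2u[\xi_T,\xi_T]+2s\,D^2u[\xi_T,n]+s^2D^2u[n,n]$. The cross and normal contributions are bounded by $K_0(2|s||\xi_T|+s^2)$. Using $2|s||\xi_T|\leq \epsilon|\xi_T|^2+\epsilon^{-1}s^2$, all $s^2$ terms with bounded coefficients are absorbed by the term $s^2/d$ once $\delta$ is small. What remains to establish is the tangential positivity
\[
D^2u(x)[\xi_T,\xi_T]\geq 2\kappa_0|\xi_T|^2\qquad (d(x)<\delta,\ \xi_T\perp Du(x)).
\]

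The tangential positivity is the main point, and I would obtain it from boundary smoothness and continuity. At a boundary point $q$, differentiating $u=0$ twice along $\partial\Omega$ gives, for tangent vectors $\tau$,
\[
D^2u(q)[\tau,\tau]=u_\nu(q)\,\mathrm{II}_q(\tau,\tau)\geq a_*\kappa|\tau|^2,
\]
where $\kappa$ is the uniform convexity constant and the sign follows from $u<0$ inside with outward normal derivative $u_\nu>0$. At $q$, the tangent space is exactly $Du(q)^\perp$. By continuity of $D^2u$ and $Du$ up to the boundary, which is now available since $u\in C^\infty(\ov\Omega)$, together with compactness of $\partial\Omega$, the same bound with constant $a_*\kappa/2$ holds on $Du(x)^\perp$ for $x$ in a collar of width $\delta$. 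This is precisely the step that fails with only $C^{1,1}$ regularity, as the introduction stresses.

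Combining the two previous steps gives, for $d(x)<\delta$ with $\delta$ small,
\[
\sum_{i,j}u_{ij}\xi^i\xi^j+\frac{(Du\cdot\xi)^2}{-u}\geq \kappa_0|\xi_T|^2+s^2\Bigl(\frac{a_*^2}{4C_1d}-C\Bigr)\geq c|\xi|^2.
\]
This proves the estimate and hence positive definiteness of $D^2v$ near the boundary. Ellipticity \eqref{eq:gap} is not needed here beyond its role in establishing smoothness.
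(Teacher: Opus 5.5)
Your argument is correct, but it follows a different route from the paper's.

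The paper writes $v=-\log d-\log(-u/d)$. The boundary geometry then enters only through the explicit Hessian of $-\log d$ in a principal frame: its tangential eigenvalues are $\kappa_i/(d(1-d\kappa_i))$ and its normal eigenvalue is $1/d^2$. The eigenfunction enters only through the smooth positive factor $-u/d$, whose logarithm has bounded Hessian and is absorbed as an $O(1)$ perturbation.

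You instead work with $(-u)D^2v=D^2u+Du\otimes Du/(-u)$. The normal direction is controlled by the gradient term, which behaves like $s^2/d$. The tangential direction comes from the boundary identity $D^2u(q)[\tau,\tau]=u_\nu(q)\,\mathrm{II}_q(\tau,\tau)$, propagated into a collar by continuity and compactness. One harmless sign slip: with $\nu$ outward, $Du(q)=u_\nu(q)\nu(q)$, not $-u_\nu\nu$. Only $Du(q)^\perp$ and $|Du|$ enter, so nothing changes.

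What each route buys: the paper's decomposition gives an explicit leading-order description of $D^2v$, separating the curvature contribution cleanly. As written, however, it relies on a bound for $D^2\log(-u/d)$, which involves third derivatives of $u$ at the boundary. Your route needs only $u\in C^2(\ov\Omega)$, so the $C^{2,\alpha}$ regularity \eqref{eq:C2alpha} already suffices. It also avoids the distance-function Hessian computation.
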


\begin{proof}
Since $u$ is smooth and vanishes on the boundary, $-u/d$ extends
smoothly to a boundary neighborhood. By the Hopf lemma, its boundary
value is $u_\nu>0$. Consequently,
\[
-u/d\in C^\infty,\qquad -u/d\geq c>0,\qquad
\norm{D^2\log(-u/d)}\leq C.
\]
Write $v=-\log d-\log(-u/d)$. Let $q$ be the nearest boundary
point and $\kappa_i(q)>0$ its principal curvatures.
In the corresponding principal frame, with the last vector normal
to the boundary, the uniform convexity of $\partial\Omega$ yields,
for sufficiently small $d>0$,
\[
\begin{aligned}
\sum_{i,j=1}^n\partial_{ij}v(x)\xi^i\xi^j
&=\sum_{i=1}^{n-1}
\frac{\kappa_i(q)(\xi^i)^2}{d(1-d\kappa_i(q))}
+\frac{(\xi^n)^2}{d^2}\\
&\quad-\sum_{i,j=1}^n\partial_{ij}\log(-u/d)(x)\xi^i\xi^j\\
&\geq\frac c d|\xi|^2-C|\xi|^2
\geq\frac{c}{2d}|\xi|^2
\end{aligned}
\]
\end{proof}

\begin{remark}\label{rem:MA-endpoint}
For $k=n$, the quadratic family has
$c_b=\sum_{i,j=1}^{n-1}(B^{-1})_{ij}b^ib^j$.
Its Hessian is positive semidefinite of rank $n-1$, and its
cofactor matrix has rank one. Thus \eqref{eq:gap} fails at this
endpoint. The lower bound in \eqref{eq:rank-bound} becomes one,
so the tangential vector used in Proposition~\ref{prop:coercivity}
need not exist. At this endpoint, Le and Savin's degenerate boundary
Schauder estimates and transformation argument prove global smoothness
of the Monge--Amp\`ere eigenfunction \cite[Theorem~1.4]{LS}.
For $k=1$, the equation is the linear Dirichlet eigenvalue equation.
\end{remark}

\section*{Acknowledgments}

The authors acknowledge support from Grant 2025YFA1017603 of the
National Key R\&D Program of China and Grant 12571227 of the National
Natural Science Foundation of China.

\section*{Declaration on the use of artificial intelligence}

Examples~\ref{ex:zero-level}, \ref{ex:newton-degeneracy} and
\ref{ex:convex-kernel} show that, when $k<n$, the
$k$-Hessian equation can remain elliptic when the right-hand side is zero,
and that the Hessian kernels required for degeneracy are incompatible
with positive definite tangential boundary data. These observations
provided the main motivation for our theorem and led us to expect
uniform ellipticity and smoothness up to the boundary for eigenfunctions
on uniformly convex domains. Through extensive
discussions with GPT-6 Astra over about a month, we resolved the
analytic difficulties described in the introduction. Most of the
blow-up techniques used in the proof were suggested by GPT.
We have verified the proofs and take full responsibility for the
mathematical content of the paper.

\begingroup
\small
\raggedright
\bibliographystyle{amsplain}
\bibliography{references}

@article{Fan,
  author  = {Fan, Z.},
  title   = {A generalization of {Savin}'s small perturbation theorem for fully nonlinear elliptic equations and applications},
  journal = {Calc. Var. Partial Differential Equations},
  volume  = {65},
  number  = {6},
  year    = {2026},
  pages   = {Paper No.~182},
  doi     = {10.1007/s00526-026-03349-7},
  url     = {https://doi.org/10.1007/s00526-026-03349-7},
  note    = {\href{https://doi.org/10.1007/s00526-026-03349-7}{doi:\,\nolinkurl{10.1007/s00526-026-03349-7}}}
}

@book{GT,
  author    = {Gilbarg, D. and Trudinger, N. S.},
  title     = {Elliptic Partial Differential Equations of Second Order},
  series    = {Classics in Mathematics},
  publisher = {Springer-Verlag},
  address   = {Berlin},
  year      = {2001},
  note      = {Reprint of the 1998 edition; \href{https://doi.org/10.1007/978-3-642-61798-0}{doi:\,\nolinkurl{10.1007/978-3-642-61798-0}}},
  doi       = {10.1007/978-3-642-61798-0},
  url       = {https://doi.org/10.1007/978-3-642-61798-0},
  edition   = {2nd},
  isbn      = {978-3-540-41160-4},
}

@misc{HeHuang,
  author        = {He, R. and Huang, G.},
  title         = {Weighted eigenvalue problem for a class of {Hessian} equations},
  year          = {2025},
  howpublished  = {Preprint, \href{https://arxiv.org/abs/2505.03231v1}{arXiv:2505.03231v1}},
  eprint        = {2505.03231},
  archivePrefix = {arXiv},
  version       = {1},
  url           = {https://arxiv.org/abs/2505.03231v1},
  doi           = {10.48550/arXiv.2505.03231},
  primaryClass  = {math.AP},
  note          = {A revised version is to appear in Indiana Univ. Math. J.; \href{https://doi.org/10.48550/arXiv.2505.03231}{arXiv doi:\,\nolinkurl{10.48550/arXiv.2505.03231}}},
}

@article{HHW,
  author  = {Hong, J. and Huang, G. and Wang, W.},
  title   = {Existence of global smooth solutions to {Dirichlet} problem for degenerate elliptic {Monge--Amp\`ere} equations},
  journal = {Comm. Partial Differential Equations},
  volume  = {36},
  number  = {4},
  year    = {2011},
  pages   = {635--656},
  doi     = {10.1080/03605302.2010.514171},
  url     = {https://doi.org/10.1080/03605302.2010.514171},
  note    = {\href{https://doi.org/10.1080/03605302.2010.514171}{doi:\,\nolinkurl{10.1080/03605302.2010.514171}}}
}

@article{ITW,
  author  = {Ivochkina, N. M. and Trudinger, N. S. and Wang, X.-J.},
  title   = {The {Dirichlet} problem for degenerate {Hessian} equations},
  journal = {Comm. Partial Differential Equations},
  volume  = {29},
  number  = {1--2},
  year    = {2004},
  pages   = {219--235},
  doi     = {10.1081/PDE-120028851},
  url     = {https://doi.org/10.1081/PDE-120028851},
  note    = {\href{https://doi.org/10.1081/PDE-120028851}{doi:\,\nolinkurl{10.1081/PDE-120028851}}}
}

@article{JW,
  author  = {Jiao, H. and Wang, Z.},
  title   = {Second order estimates for convex solutions of degenerate {$k$}-{Hessian} equations},
  journal = {J. Funct. Anal.},
  volume  = {286},
  number  = {3},
  year    = {2024},
  pages   = {Paper No.~110248},
  doi     = {10.1016/j.jfa.2023.110248},
  url     = {https://doi.org/10.1016/j.jfa.2023.110248},
  note    = {\href{https://doi.org/10.1016/j.jfa.2023.110248}{doi:\,\nolinkurl{10.1016/j.jfa.2023.110248}}}
}

@article{Le,
  author  = {Le, N. Q.},
  title   = {A spectral characterization and an approximation scheme for the {Hessian} eigenvalue},
  journal = {Rev. Mat. Iberoam.},
  volume  = {38},
  number  = {5},
  year    = {2022},
  pages   = {1473--1500},
  doi     = {10.4171/RMI/1306},
  url     = {https://doi.org/10.4171/RMI/1306},
  note    = {\href{https://doi.org/10.4171/RMI/1306}{doi:\,\nolinkurl{10.4171/RMI/1306}}}
}

@misc{LeICBS,
  author       = {Le, N. Q.},
  title        = {The {Monge--Amp\`ere} eigenvalue problem and global smoothness of the eigenfunctions},
  year         = {2024},
  howpublished = {Lecture slides, International Congress of Basic Science, Beijing, July 16},
  url          = {https://lsp.icbs.cn/upload/156-1721193461-A14-304-Nam%20Quang%20Le-0716.pdf},
  note         = {\href{https://lsp.icbs.cn/upload/156-1721193461-A14-304-Nam\%20Quang\%20Le-0716.pdf}{Public lecture slides}}
}

@article{LS,
  author  = {Le, N. Q. and Savin, O.},
  title   = {{Schauder} estimates for degenerate {Monge--Amp\`ere} equations and smoothness of the eigenfunctions},
  journal = {Invent. Math.},
  volume  = {207},
  number  = {1},
  year    = {2017},
  pages   = {389--423},
  doi     = {10.1007/s00222-016-0677-1},
  url     = {https://doi.org/10.1007/s00222-016-0677-1},
  note    = {\href{https://doi.org/10.1007/s00222-016-0677-1}{doi:\,\nolinkurl{10.1007/s00222-016-0677-1}}},
}

@misc{LMQS,
  author        = {Li, J. and Ma, X.-N. and Qiu, G. and Salani, P.},
  title         = {A {Brunn--Minkowski} inequality and convexity for the {$2$}-{Hessian} eigenvalue in convex domains},
  year          = {2026},
  howpublished  = {Preprint, \href{https://arxiv.org/abs/2609.09175v1}{arXiv:2609.09175v1}},
  eprint        = {2609.09175},
  archivePrefix = {arXiv},
  version       = {1},
  url           = {https://arxiv.org/abs/2609.09175v1},
  doi           = {10.48550/arXiv.2609.09175},
  primaryClass  = {math.AP},
  note          = {\href{https://doi.org/10.48550/arXiv.2609.09175}{arXiv doi:\,\nolinkurl{10.48550/arXiv.2609.09175}}},
}

@article{Lions,
  author  = {Lions, P.-L.},
  title   = {Two remarks on {Monge--Amp\`ere} equations},
  journal = {Ann. Mat. Pura Appl. (4)},
  volume  = {142},
  year    = {1986},
  pages   = {263--275},
  doi     = {10.1007/BF01766596},
  url     = {https://doi.org/10.1007/BF01766596},
  note    = {\href{https://doi.org/10.1007/BF01766596}{doi:\,\nolinkurl{10.1007/BF01766596}}},
}

@article{LMX,
  author  = {Liu, P. and Ma, X.-N. and Xu, L.},
  title   = {A {Brunn--Minkowski} inequality for the {Hessian} eigenvalue in three-dimensional convex domain},
  journal = {Adv. Math.},
  volume  = {225},
  number  = {3},
  year    = {2010},
  pages   = {1616--1633},
  doi     = {10.1016/j.aim.2010.04.003},
  url     = {https://doi.org/10.1016/j.aim.2010.04.003},
  note    = {\href{https://doi.org/10.1016/j.aim.2010.04.003}{doi:\,\nolinkurl{10.1016/j.aim.2010.04.003}}}
}

@misc{QZ,
  author        = {Qiu, G. and Zhou, X.},
  title         = {A priori interior estimates for special {Lagrangian} curvature equations},
  year          = {2024},
  howpublished  = {Preprint, \href{https://arxiv.org/abs/2407.15159v1}{arXiv:2407.15159v1}},
  eprint        = {2407.15159},
  archivePrefix = {arXiv},
  version       = {1},
  url           = {https://arxiv.org/abs/2407.15159v1},
  doi           = {10.48550/arXiv.2407.15159},
  primaryClass  = {math.AP},
  note          = {\href{https://doi.org/10.48550/arXiv.2407.15159}{arXiv doi:\,\nolinkurl{10.48550/arXiv.2407.15159}}},
}

@article{SalaniMA,
  author  = {Salani, P.},
  title   = {A {Brunn--Minkowski} inequality for the {Monge--Amp\`ere} eigenvalue},
  journal = {Adv. Math.},
  volume  = {194},
  number  = {1},
  year    = {2005},
  pages   = {67--86},
  doi     = {10.1016/j.aim.2004.05.011},
  url     = {https://doi.org/10.1016/j.aim.2004.05.011},
  note    = {\href{https://doi.org/10.1016/j.aim.2004.05.011}{doi:\,\nolinkurl{10.1016/j.aim.2004.05.011}}}
}

@article{SalaniHessian,
  author        = {Salani, P.},
  title         = {Convexity of solutions and {Brunn--Minkowski} inequalities for {Hessian} equations in {$\mathbb R^3$}},
  journal       = {Adv. Math.},
  volume        = {229},
  number        = {3},
  year          = {2012},
  pages         = {1924--1948},
  eprint        = {1101.0496},
  archivePrefix = {arXiv},
  url           = {https://doi.org/10.1016/j.aim.2011.12.009},
  note          = {\href{https://doi.org/10.1016/j.aim.2011.12.009}{doi:\,\nolinkurl{10.1016/j.aim.2011.12.009}}; \href{https://arxiv.org/abs/1101.0496}{arXiv:1101.0496}},
  doi           = {10.1016/j.aim.2011.12.009},
}

@article{Savin,
  author  = {Savin, O.},
  title   = {Small perturbation solutions for elliptic equations},
  journal = {Comm. Partial Differential Equations},
  volume  = {32},
  number  = {4--6},
  year    = {2007},
  pages   = {557--578},
  doi     = {10.1080/03605300500394405},
  url     = {https://doi.org/10.1080/03605300500394405},
  note    = {\href{https://doi.org/10.1080/03605300500394405}{doi:\,\nolinkurl{10.1080/03605300500394405}}}
}

@article{SavinBoundary,
  author  = {Savin, O.},
  title   = {A localization theorem and boundary regularity for a class of degenerate {Monge--Amp\`ere} equations},
  journal = {J. Differential Equations},
  volume  = {256},
  number  = {2},
  year    = {2014},
  pages   = {327--388},
  doi     = {10.1016/j.jde.2013.08.019},
  url     = {https://doi.org/10.1016/j.jde.2013.08.019},
  note    = {\href{https://doi.org/10.1016/j.jde.2013.08.019}{doi:\,\nolinkurl{10.1016/j.jde.2013.08.019}}},
}

@article{SS,
  author  = {Silvestre, L. and Sirakov, B.},
  title   = {Boundary regularity for viscosity solutions of fully nonlinear elliptic equations},
  journal = {Comm. Partial Differential Equations},
  volume  = {39},
  number  = {9},
  year    = {2014},
  pages   = {1694--1717},
  doi     = {10.1080/03605302.2013.842249},
  url     = {https://doi.org/10.1080/03605302.2013.842249},
  note    = {\href{https://doi.org/10.1080/03605302.2013.842249}{doi:\,\nolinkurl{10.1080/03605302.2013.842249}}}
}

@article{TWX,
  author  = {Tian, G. and Wang, Q. and Xu, C.-J.},
  title   = {Local solvability of the {$k$}-{Hessian} equations},
  journal = {Sci. China Math.},
  volume  = {59},
  number  = {9},
  year    = {2016},
  pages   = {1753--1768},
  doi     = {10.1007/s11425-016-5135-4},
  url     = {https://doi.org/10.1007/s11425-016-5135-4},
  note    = {\href{https://doi.org/10.1007/s11425-016-5135-4}{doi:\,\nolinkurl{10.1007/s11425-016-5135-4}}}
}

@article{TWWY,
  author  = {Tosatti, V. and Wang, Y. and Weinkove, B. and Yang, X.},
  title   = {{$C^{2,\alpha}$} estimates for nonlinear elliptic equations in complex and almost complex geometry},
  journal = {Calc. Var. Partial Differential Equations},
  volume  = {54},
  number  = {1},
  year    = {2015},
  pages   = {431--453},
  doi     = {10.1007/s00526-014-0791-0},
  url     = {https://doi.org/10.1007/s00526-014-0791-0},
  note    = {\href{https://doi.org/10.1007/s00526-014-0791-0}{doi:\,\nolinkurl{10.1007/s00526-014-0791-0}}}
}

@article{Tso,
  author  = {Tso, K.},
  title   = {On a real {Monge--Amp\`ere} functional},
  journal = {Invent. Math.},
  volume  = {101},
  number  = {2},
  year    = {1990},
  pages   = {425--448},
  doi     = {10.1007/BF01231510},
  url     = {https://doi.org/10.1007/BF01231510},
  note    = {\href{https://doi.org/10.1007/BF01231510}{doi:\,\nolinkurl{10.1007/BF01231510}}},
}

@article{Wang94,
  author  = {Wang, X.-J.},
  title   = {A class of fully nonlinear elliptic equations and related functionals},
  journal = {Indiana Univ. Math. J.},
  volume  = {43},
  number  = {1},
  year    = {1994},
  pages   = {25--54},
  doi     = {10.1512/iumj.1994.43.43002},
  url     = {https://doi.org/10.1512/iumj.1994.43.43002},
  note    = {\href{https://doi.org/10.1512/iumj.1994.43.43002}{doi:\,\nolinkurl{10.1512/iumj.1994.43.43002}}},
  issn    = {0022-2518},
}

@incollection{Wang09,
  author    = {Wang, X.-J.},
  title     = {The {$k$}-{Hessian} equation},
  booktitle = {Geometric Analysis and PDEs},
  series    = {Lecture Notes in Math.},
  volume    = {1977},
  publisher = {Springer},
  address   = {Dordrecht},
  year      = {2009},
  pages     = {177--252},
  doi       = {10.1007/978-3-642-01674-5_5},
  url       = {https://doi.org/10.1007/978-3-642-01674-5_5},
  note      = {\href{https://doi.org/10.1007/978-3-642-01674-5_5}{doi:\,\nolinkurl{10.1007/978-3-642-01674-5_5}}},
  editor    = {Ambrosetti, A. and Chang, S.-Y. A. and Malchiodi, A.},
  isbn      = {978-3-642-01673-8},
}

@article{WX,
  author  = {Wang, Q. and Xu, C.-J.},
  title   = {{$C^{1,1}$} solution of the {Dirichlet} problem for degenerate {$k$}-{Hessian} equations},
  journal = {Nonlinear Anal.},
  volume  = {104},
  year    = {2014},
  pages   = {133--146},
  doi     = {10.1016/j.na.2014.03.016},
  url     = {https://doi.org/10.1016/j.na.2014.03.016},
  note    = {\href{https://doi.org/10.1016/j.na.2014.03.016}{doi:\,\nolinkurl{10.1016/j.na.2014.03.016}}}
}

@misc{Zhang,
  author        = {Zhang, J.},
  title         = {The {Dirichlet} eigenvalue problems for some concave elliptic {Hessian} operators},
  year          = {2025},
  howpublished  = {Preprint, \href{https://arxiv.org/abs/2510.16748v3}{arXiv:2510.16748v3}},
  eprint        = {2510.16748},
  archivePrefix = {arXiv},
  version       = {3},
  url           = {https://arxiv.org/abs/2510.16748v3},
  doi           = {10.48550/arXiv.2510.16748},
  primaryClass  = {math.AP},
  note          = {\href{https://doi.org/10.48550/arXiv.2510.16748}{arXiv doi:\,\nolinkurl{10.48550/arXiv.2510.16748}}},
}

@article{CIL,
  author  = {Crandall, M. G. and Ishii, H. and Lions, P.-L.},
  title   = {User's guide to viscosity solutions of second order partial differential equations},
  journal = {Bull. Amer. Math. Soc. (N.S.)},
  volume  = {27},
  number  = {1},
  year    = {1992},
  pages   = {1--67},
  doi     = {10.1090/S0273-0979-1992-00266-5},
  url     = {https://doi.org/10.1090/S0273-0979-1992-00266-5},
  note    = {\href{https://doi.org/10.1090/S0273-0979-1992-00266-5}{doi:\,\nolinkurl{10.1090/S0273-0979-1992-00266-5}}}
}

@book{CHLP,
  author    = {Cirant, M. and Harvey, F. R. and Lawson, Jr., H. B. and Payne, K. R.},
  title     = {Comparison Principles for General Potential Theories and {PDEs}},
  series    = {Annals of Mathematics Studies},
  volume    = {218},
  publisher = {Princeton University Press},
  address   = {Princeton, NJ},
  year      = {2023},
  isbn      = {978-0-691-24361-0},
  doi       = {10.1515/9780691243641},
  url       = {https://doi.org/10.1515/9780691243641},
  note      = {\href{https://doi.org/10.1515/9780691243641}{doi:\,\nolinkurl{10.1515/9780691243641}}}
}

@article{DeSilvaSavin,
  author  = {De Silva, D. and Savin, O.},
  title   = {A note on higher regularity boundary {Harnack} inequality},
  journal = {Discrete Contin. Dyn. Syst.},
  volume  = {35},
  number  = {12},
  year    = {2015},
  pages   = {6155--6163},
  doi     = {10.3934/dcds.2015.35.6155},
  url     = {https://doi.org/10.3934/dcds.2015.35.6155},
  note    = {\href{https://doi.org/10.3934/dcds.2015.35.6155}{doi:\,\nolinkurl{10.3934/dcds.2015.35.6155}}}
}

@article{IPY,
  author  = {Ivochkina, N. M. and Prokof'eva, S. I. and Yakunina, G. V.},
  title   = {The {G\aa rding} cones in the modern theory of fully nonlinear second order differential equations},
  journal = {J. Math. Sci.},
  volume  = {184},
  number  = {3},
  year    = {2012},
  pages   = {295--315},
  doi     = {10.1007/s10958-012-0869-1},
  url     = {https://doi.org/10.1007/s10958-012-0869-1},
  note    = {\href{https://doi.org/10.1007/s10958-012-0869-1}{doi:\,\nolinkurl{10.1007/s10958-012-0869-1}}}
}

@article{JiaMa,
  author  = {Jia, X. and Ma, S.},
  title   = {The {Liouville} theorem for {$k$}-{Hessian} equations in the half space},
  journal = {Calc. Var. Partial Differential Equations},
  volume  = {64},
  number  = {6},
  year    = {2025},
  pages   = {Paper No.~192},
  doi     = {10.1007/s00526-025-03059-6},
  url     = {https://doi.org/10.1007/s00526-025-03059-6},
  note    = {\href{https://doi.org/10.1007/s00526-025-03059-6}{doi:\,\nolinkurl{10.1007/s00526-025-03059-6}}}
}

@article{TTV,
  author  = {Terracini, S. and Tortone, G. and Vita, S.},
  title   = {Higher order boundary {Harnack} principle via degenerate equations},
  journal = {Arch. Ration. Mech. Anal.},
  volume  = {248},
  number  = {2},
  year    = {2024},
  pages   = {Paper No.~29},
  doi     = {10.1007/s00205-024-01973-1},
  url     = {https://doi.org/10.1007/s00205-024-01973-1},
  note    = {44 pp.; \href{https://doi.org/10.1007/s00205-024-01973-1}{doi:\,\nolinkurl{10.1007/s00205-024-01973-1}}}
}

@article{YuWang,
  author  = {Wang, Y.},
  title   = {On the {$C^{2,\alpha}$}-regularity of the complex {Monge--Amp\`ere} equation},
  journal = {Math. Res. Lett.},
  volume  = {19},
  number  = {4},
  year    = {2012},
  pages   = {939--946},
  doi     = {10.4310/MRL.2012.v19.n4.a16},
  url     = {https://doi.org/10.4310/MRL.2012.v19.n4.a16},
  note    = {\href{https://doi.org/10.4310/MRL.2012.v19.n4.a16}{doi:\,\nolinkurl{10.4310/MRL.2012.v19.n4.a16}}}
}

@misc{ZZ,
  author        = {Zhou, X. and Zhu, R.},
  title         = {Rigidity for convex entire solutions to the {$k$}-{Hessian} equation},
  year          = {2026},
  howpublished  = {Preprint, \href{https://arxiv.org/abs/2609.20445v1}{arXiv:2609.20445v1}},
  eprint        = {2609.20445},
  archivePrefix = {arXiv},
  version       = {1},
  primaryClass  = {math.AP},
  url           = {https://arxiv.org/abs/2609.20445v1},
  doi           = {10.48550/arXiv.2609.20445},
  note          = {\href{https://doi.org/10.48550/arXiv.2609.20445}{arXiv doi:\,\nolinkurl{10.48550/arXiv.2609.20445}}}
}
\endgroup
\printauthorinformation

\end{document}